\newtheorem{maintheorem}{Theorem}
\newtheorem{theorem}{Theorem}[section]
\newtheorem{lemma}[theorem]{Lemma}
\newtheorem{proposition}[theorem]{Proposition}
\newtheorem{observation}[theorem]{Observation}
\newtheorem{definition}[theorem]{Definition}
\def\XXint#1#2#3{{\setbox0=\hbox{$#1{#2#3}{\int}$ }
\vcenter{\hbox{$#2#3$ }}\kern-.6\wd0}}
\newcommand{\E}{\mathbb{E}}
\renewcommand{\P}{\mathbb{P}}
\newcommand{\Z}{\mathbb{Z}}
\newcommand{\R}{\mathbb{R}}
\newcommand{\cf}{\mathcal{F}}
\begin{document}
\title{Bi-Lipschitz Expansion of Measurable Sets}

\author{Riddhipratim Basu\thanks{Dept. of Statistics, University of California, Berkeley. Supported by UC Berkeley Graduate Fellowship. Email:riddhipratim@stat.berkeley.edu}
\and
Vladas Sidoravicius\thanks{IMPA, Estrada Dona Castorina 110, Rio de Janeiro, Brasil. Email: vladas@impa.br}
\and
Allan Sly\thanks{University of California, Berkeley and Australian National University. Supported by an Alfred Sloan Fellowship and NSF grants DMS-1208338, DMS-1352013. Email:sly@stat.berkeley.edu}
}

\date{}
\maketitle

\begin{abstract}
We show that for $0<\gamma, \gamma' <1$ and for measurable subsets of the unit square with Lebesgue measure $\gamma$ there exist bi-Lipschitz maps with bounded Lipschitz constant (uniformly over all such sets) which are identity on the boundary and increases the Lebesgue measure of the set to at least $1-\gamma'$.
\end{abstract}

\section{Introduction}
This  is the first of a series of three papers considering the problem of finding Lipschitz embeddings between high dimensional random objects. In the present paper the main result concerns proving the existence of localized bi-Lipschitz stretchings of arbitrary measurable sets in the unit square.

Ultimately in an upcoming work~\cite{BSS14} we prove that two Poisson process in the plane are almost surely roughly isometric, a notion of embeddings between metric spaces (see Definition~\ref{d:roughIsometry}).  In \cite{BS14} we showed the analogue of this result for one dimensional Poisson processes.  For such an embedding one must be able to match the gaps of points on all scales.  Even in one dimension this is challenging and required an involved multi-scale proof.  Our program is to construct a similar multi-scale induction in higher dimensions.  However, unlike the one dimensional case, where regions without points are simply line segments, on larger scales these sets can resemble any measurable set.  The result presented here, a purely deterministic one, provides the necessary input in establishing the base case estimates of our multi-scale induction~\cite{BSS14}. It allows us to effectively shrink the size of empty regions, see Theorem~\ref{t:riuse} below.

Our main result in this paper concerns bijections from the unit square on the plane to itself that are identity on the boundary. For any set $A$ of a fixed Lebesgue measure $\gamma$,  we are interested in constructing such a function, which additionally is bi-Lipschitz, and  maps $A$ to  a set  with Lebesgue measure above a fixed threshold $1-\gamma'$. For a fixed choice of $\gamma$ and $\gamma'$ (with $1-\gamma'>\gamma$) we want to maintain a uniform control over the bi-Lipschitz constants of the such functions. The main result in this paper, Theorem~\ref{t:stretching}, establishes that it is possible to do so.

\subsection{Statement of the Results}
We denote the sigma algebra of all Borel-measurable subsets of $[0,1]^2$ by $\mathcal{B}([0,1]^2)$ and let $\lambda$ denote the Lebesgue measure on $\R^2$. Our main result in this paper is the following.

\begin{maintheorem}
\label{t:stretching}
For each $\gamma, \gamma' \in (0,1)$, $\gamma+\gamma'<1$, there exists $C_0=C_0(\gamma, \gamma')>0$ such that for all $A\in \mathcal{B}([0,1]^2)$ with $\lambda(A)=\gamma$, there exists a bijection $\phi_0: [0,1]^2\rightarrow [0,1]^2$ such that
\begin{enumerate}
\item
$\phi_0$ is $C_0$-bi-Lipschitz, i.e.
$$\frac{1}{C_0}|x-y|\leq |\phi_0(x)-\phi_0(y)|\leq C_0|x-y|~~\forall x,y\in [0,1]^2.$$
\item
$\phi_0$ is identity on the boundary, i.e., $\phi_0(x)=x$, for all $x\in \partial [0,1]^2$.
\item
$\lambda(\phi_0(A))\geq 1-\gamma'$.
\end{enumerate}
\end{maintheorem}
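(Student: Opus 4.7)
My plan to prove Theorem~\ref{t:stretching} proceeds via a reduction to a dyadic model, an iterative improvement scheme, and a multiscale ``balancing'' construction for the individual step.

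First, I would reduce to the case where $A$ is a finite union of small dyadic squares. By standard approximation of Lebesgue-measurable sets, for any $\epsilon > 0$ there is a finite union $A'$ of dyadic squares at some scale $2^{-n}$ with $\lambda(A \triangle A') < \epsilon$. If I can find a $C_0$-bi-Lipschitz $\phi_0$ fixing the boundary of $[0,1]^2$ with $\lambda(\phi_0(A')) \geq 1 - \gamma'/2$, then $\lambda(\phi_0(A)) \geq 1 - \gamma'/2 - C_0^2\epsilon \geq 1 - \gamma'$ for $\epsilon$ small enough. This lets me focus on combinatorially simple $A$.

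The core of the argument would be an iteration lemma: for any $\gamma_1 < 1-\gamma'$ there exist $C_1 = C_1(\gamma_1, \gamma')$ and $c_1 = c_1(\gamma_1, \gamma') > 0$ such that any measurable $A$ with $\lambda(A) \geq \gamma_1$ admits a $C_1$-bi-Lipschitz self-map of $[0,1]^2$ fixing $\partial[0,1]^2$ and increasing the measure to at least $\min(\gamma_1 + c_1, 1-\gamma')$. Iterating this $O(1/c_1)$ times and composing yields the required $\phi_0$ with bi-Lipschitz constant $C_0 = C_1^{O(1)}$, depending only on $\gamma, \gamma'$.

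For the single improvement step, my strategy is a ``balancing'' via piecewise-linear maps at some dyadic scale $1/n$. Partition $[0,1]^2$ into $n^2$ cells with densities $d_{ij} = n^2 \lambda(A \cap Q_{ij})$, and consider row averages $r_j = \sum_i d_{ij}/n$ and column averages $c_i = \sum_j d_{ij}/n$. If $\{r_j\}$ (or $\{c_i\}$) has large spread, I apply a piecewise-linear bi-Lipschitz map whose vertical (resp. horizontal) component has slopes in $[1/C, C]$ arranged to stretch high-density rows/columns and contract low-density ones while fixing the boundary; this strictly increases $\lambda(\psi(A))$ by a definite amount computable from the spread. Otherwise the densities are nearly uniform at that scale and I pass to scale $1/(2n)$; a variance/martingale argument on the dyadic tree shows that unless $A$ has density exactly $\gamma_1$ in every dyadic cell of every scale (impossible for $0 < \gamma_1 < 1$), non-trivial spread must appear at some finite scale $n \leq n_0(\gamma_1, \gamma')$.

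The main obstacle is establishing uniformity of the bi-Lipschitz constant over all patterns of $A$. Specifically, one must lower bound the variance of densities appearing at some scale bounded purely in terms of $\gamma_1, \gamma'$, and the resulting piecewise-linear stretching must have Lipschitz constant independent of the geometric distribution of the cells. A secondary subtlety is that if progress requires working at a very fine scale, the corresponding map is close to identity but its composition with earlier maps must still yield only a bounded cumulative Lipschitz factor; this likely requires an inductive construction in which the geometric control at each scale telescopes, for instance by a careful accounting of the Jacobians at each dyadic level.
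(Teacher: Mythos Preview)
Your iteration scheme---reduce to a single ``improvement step'' with uniform Lipschitz constant, then compose $O(1)$ times---is exactly how the paper proceeds: Theorem~\ref{t:stretching} is deduced in a few lines from Theorem~\ref{t:stretchingfixed}, which provides an $\varepsilon(\gamma,\gamma',\eta)$-improvement with $(1+\eta)$-bi-Lipschitz constant. The architecture is right; the gaps are in the improvement step itself.

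First, the boundary condition. A map $(x,y)\mapsto(f(x),y)$ with piecewise-linear $f$ sends $(x,0)$ to $(f(x),0)$, so it is \emph{not} the identity on $\partial[0,1]^2$. Producing a map that fixes the boundary pointwise, expands one half of the square by a factor $1+\delta$ in area and contracts the other by $1-\delta$, and remains $(1+O(\delta))$-bi-Lipschitz, is precisely what the paper's auxiliary map $\Psi_\delta$ (built via the $r$--$\theta$ parametrisation of \S\ref{s:aux}) is engineered to do; your proposal offers no substitute for this construction.

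Second, and more seriously, the claim that spread must appear at some scale $n\le n_0(\gamma_1,\gamma')$ depending only on $\gamma_1,\gamma'$ is false, not merely hard. A checkerboard at scale $2^{-N}$ has density exactly $\gamma$ in every dyadic box of every level $<N$, and $N$ is arbitrary; your preliminary dyadic approximation does not help, since its own scale is unbounded in $A$. The paper confronts this by composing stretchings $\varphi_1\circ\cdots\circ\varphi_{\tau-1}$ at \emph{all} dyadic levels up to a set-dependent stopping time $\tau$, and the bulk of the work (\S\ref{s:stop}--\S\ref{s:esty}) is a martingale argument showing that although $\tau$ is unbounded, the cumulative derivative stays within $1+\eta$ of the identity with probability at least $2/3$. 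Your closing remark about telescoping Jacobians gestures in the right direction, but the actual mechanism---approximating $\Phi_n'$ by an $\mathcal{F}_n$-measurable matrix $\mathbf{Y}_n$, comparing that to a matrix martingale $\mathbf{M}_n$, and bounding the latter via Freedman's inequality---is the heart of the proof and is absent from your proposal.
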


As mentioned above Theroem \ref{t:stretching} plays a crucial role in the study of rough isometries of i.i.d. copies of 2-dimensional Poisson point process. Informally, two metric spaces are roughly isometric if their metrics are equivalent up to multiplicative and additive constants, a relaxation of a bi-Lipschitz mapping.  The formal definition, introduced by Gromov~\cite{Gromov:81} in the case of groups and more generally by Kanai~\cite{Kanai:85}, is as follows.

\begin{definition}\label{d:roughIsometry}
We say two metric spaces $X$ and $Y$ are roughly isometric (or, quasi isometric) with parameters $(M,D,C)$ if there exists a mapping $T:X\to Y$ such that for any $x_1, x_2 \in X$,
\[
\frac1M d_X(x_1,x_2)-D \leq d_Y(T(x_1),T(x_2)) \leq M d_X(x_1,x_2) + D,
\]
and for all $y\in Y$ there exists $x\in X$ such that $d_Y(T(x),y)\leq C$.
\end{definition}

The importance of Theorem \ref{t:stretching} in proving rough isometry of Poisson processes is illustrated by our next result.

Let $X$ and $Y$ be two independent Poisson point processes on $\R^2$. Let $X_n$ (resp. $Y_n$) denote the random metric space formed by points of $X$ (resp. $Y$) within $[0,n]^2$ along with the boundary of $[0,n]^2$. Let $X_n \hookrightarrow_{(M,D,C)} Y_n$ denote the event that there exists a rough isometry with parameters $(M,D,C)$ between $X_n$ and $Y_n$ which is identity on the boundary of $[0,n]^2$. Let $k_X(n)$ denote the number of unit squares in $[0,n]^2$ which contains at least one point of $X$. We have the following theorem as a consequence of Theorem \ref{t:stretching}.

\begin{maintheorem}
\label{t:riuse}
Fix $\epsilon, \delta>0$. Then there exist positive constants $M,D,C$ depending on $\epsilon$ and $\delta$ (not depending on $n$) such that we have that for all $n$ sufficiently large and for all $X$ with $k_X(n)\geq \delta n^2$, $\P[X_n\hookrightarrow _{(M,D,C)} Y_n \mid X_n]\geq e^{-\epsilon n^2}$.
\end{maintheorem}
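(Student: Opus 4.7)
My plan is to apply Theorem~\ref{t:stretching} to spread the points of $X$ uniformly over $[0,n]^2$, and then to build the rough isometry by matching each stretched $X$-point to a nearby $Y$-point on a constant-scale grid. The target probability $e^{-\epsilon n^2}$ will arise as the probability that $Y$ has a block-count profile compatible with this stretched configuration.

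\textbf{Step 1 (Stretching).} Let $A\subseteq[0,n]^2$ be the union of all unit squares of $[0,n]^2$ containing at least one point of $X$, so $\lambda(A)=k_X(n)\geq\delta n^2$, and fix a subset $A_0\subseteq A$ of measure exactly $\delta n^2$ (a union of good unit squares). After rescaling by $1/n$, apply Theorem~\ref{t:stretching} to $A_0/n$ with parameters $(\delta,\gamma')$ for a small $\gamma'\in(0,\epsilon/2)$ to be fixed, and rescale back. This produces a $C_0$-bi-Lipschitz bijection $\phi_n:[0,n]^2\to[0,n]^2$, identity on $\partial[0,n]^2$, with $\lambda(\phi_n(A_0))\geq(1-\gamma')n^2$ and $C_0=C_0(\delta,\gamma')$ independent of $n,X$. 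In particular $\phi_n(X)\subseteq\phi_n(A)$ and $\lambda([0,n]^2\setminus\phi_n(A))\leq\gamma' n^2$.

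\textbf{Step 2 (Block matching event).} Fix $L=L(\delta,\gamma')$ with $L\geq 10\,C_0$ and partition $[0,n]^2$ into $L\times L$ cells. Call a cell $B$ \emph{inert} if $B\subseteq[0,n]^2\setminus\phi_n(A)$ and \emph{active} otherwise; the inert cells total at most $\gamma' n^2/L^2$. For each good unit square $S\subseteq A$ the image $\phi_n(S)$ has diameter $\leq C_0\sqrt{2}<L$ and contains at least one $\phi_n(X)$ point, so every active cell $B$ has some $\phi_n(X)$ point in its $3\times 3$ neighborhood. Writing $N_Y(B)$ for the number of $Y$-points in $B$, consider
\[
E:=\bigl\{\,N_Y(B)\geq 1\text{ for every active cell }B,\text{ and }N_Y(B)=0\text{ for every inert cell }B\,\bigr\}.
\]
By independence of Poisson counts across disjoint cells,
\[
\P[E\mid X_n]\;\geq\;(1-e^{-L^2})^{n^2/L^2}\,(e^{-L^2})^{\gamma' n^2/L^2}\;\geq\;e^{-(\gamma'+o_L(1))n^2},
\]
so taking $\gamma'<\epsilon/2$ and then $L$ sufficiently large yields $\P[E\mid X_n]\geq e^{-\epsilon n^2}$.

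\textbf{Step 3 (Defining the rough isometry).} On $E$, define $T$ to be the identity on $\partial[0,n]^2$ and, for each $x\in X$, pick $T(x)$ to be any $Y$-point in the cell containing $\phi_n(x)$ (this cell is active and hence carries $N_Y\geq1$). Then $|T(x)-\phi_n(x)|\leq\sqrt{2}\,L$; combined with $\phi_n$ being $C_0$-bi-Lipschitz and the identity on $\partial[0,n]^2$, this yields the two-sided coarse Lipschitz inequality with $(M,D)=(C_0,\,2\sqrt{2}\,L)$. Density: any $y\in Y$ lies in an active cell (inert cells have $N_Y=0$ on $E$), hence some $\phi_n(x)$ lies in its $3\times 3$ neighborhood, and the corresponding $T(x)$ lies within $2\sqrt{2}\,L$ of $y$; boundary $Y_n$-points are covered trivially. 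Thus $T$ witnesses $X_n\hookrightarrow_{(C_0,\,2\sqrt{2}\,L,\,2\sqrt{2}\,L)}Y_n$ with constants depending only on $\epsilon,\delta$.

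\textbf{Main obstacle.} The crux is balancing $\gamma'$ against $L$: smaller $\gamma'$ shrinks the hole $[0,n]^2\setminus\phi_n(A)$ and reduces the exponential cost $e^{-\gamma' n^2}$ of demanding $N_Y=0$ there, but it blows up the bi-Lipschitz constant $C_0(\delta,\gamma')$ supplied by Theorem~\ref{t:stretching} and thereby forces $L$ (and the additive constants $D,C$) to grow. Fortunately $L$ enters $\P[E]$ only through the subexponential factor $(1-e^{-L^2})^{n^2/L^2}=e^{-o_L(1)n^2}$, so the parameters decouple: fix $\gamma'$ in terms of $\epsilon$ first, then pick $L$ large in terms of $\gamma',\delta,C_0$. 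A secondary point is handling cells straddling $\phi_n(A)$ and its complement; this can be absorbed by slightly enlarging the inert set at the cost of a multiplicative constant in the $\gamma'$ exponent.
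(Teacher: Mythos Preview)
Your proposal is correct and follows essentially the same route as the paper: stretch the union of occupied unit squares via Theorem~\ref{t:stretching} to cover all but a $\gamma'$-fraction of $[0,n]^2$, partition into constant-size cells, and condition on the Poisson event that cells meeting the stretched set each contain a $Y$-point while the remainder are empty. You are in fact somewhat more careful than the paper (passing to a subset $A_0$ of exact measure to match the hypothesis of Theorem~\ref{t:stretching}, tying the cell size $L$ to $C_0$ explicitly, and spelling out the map $T$ and the density check), whereas the paper simply asserts that on $\mathcal{E}$ a rough isometry with suitable constants exists; the only cosmetic differences are your order of choosing parameters ($\gamma'$ then $L$, versus the paper's $\kappa$ then $\delta'$) and some harmless slack in the numerical constants in Step~3.
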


\bigskip

By way of proving Theorem \ref{t:stretching} we also establish the following result which shows that it is possible to increase the measure of a set by an arbitrarily small amount in a bi-Lipschitz manner.
\begin{maintheorem}
\label{t:stretchingfixed}
Fix $0<\gamma < 1 -\gamma' <1$, and $\eta>0$. Then there exists  $\varepsilon=\varepsilon(\gamma, \gamma', \eta)>0$ such that the following holds. For every Borel set $A\subseteq [0,1]^2$ with $\lambda(A)\in [\gamma, 1-\gamma']$, there exists a bijection  $\phi=\phi_{A}: [0,1]^2\rightarrow [0,1]^2$ such that
\begin{enumerate}
\item
$\phi$ is $(1+\eta)$-bi-Lipschitz.
\item
$\phi$ is identity on the boundary of $[0,1]^2$.
\item
$\lambda(\phi(A))\geq \lambda(A)+\varepsilon.$
\end{enumerate}
\end{maintheorem}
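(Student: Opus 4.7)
My plan is to construct $\phi$ as a piecewise-linear $(1+\eta)$-bi-Lipschitz self-map of $[0,1]^2$ which is the identity outside some carefully chosen dyadic sub-square $Q \subset [0,1]^2$, and whose action inside $Q$ is a controlled \emph{median shift}: the common boundary between two sibling sub-squares $Q^+, Q^- \subset Q$ is moved slightly into the one with lower $A$-density. This locally expands the denser side and yields a gain
\[
\lambda(\phi(A)) - \lambda(A) \ \propto \ \eta \, \lambda(Q)\,(\rho_{Q^+} - \rho_{Q^-}),
\]
where $\rho_R := \lambda(A \cap R)/\lambda(R)$. Two technical tasks then arise: (a) showing that an "exploitable" $Q$ exists for every admissible $A$, with the resulting gain bounded below uniformly in $A$; and (b) implementing the median-shift map and verifying its bi-Lipschitz constant together with the measure-gain estimate.

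For task (a), the starting point is that, since $\lambda(A) \in [\gamma, 1-\gamma']$,
\[
\int_{[0,1]^2}\bigl(\mathbf{1}_A - \lambda(A)\bigr)^2 \, d\lambda \ = \ \lambda(A)\bigl(1-\lambda(A)\bigr) \ \geq \ \gamma\gamma' \ > \ 0.
\]
The standard dyadic variance decomposition expresses this $L^2$-deviation as $\sum_{k \geq 0} D_k = \lambda(A)(1-\lambda(A))$, where $D_k = \tfrac14 \sum_{Q \text{ of side } 2^{-k}} \lambda(Q) \sum_{Q' \subset Q}(\rho_{Q'} - \rho_Q)^2$ is the parent-to-children variance at dyadic level $k$. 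A pigeonhole-type argument then extracts a dyadic $Q$ (of side $s$) and a sibling pair $(Q^+, Q^-) \subset Q$ with $\lambda(Q)\, |\rho_{Q^+} - \rho_{Q^-}| \geq \varepsilon_0(\gamma,\gamma') > 0$.

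For task (b), I would define $\psi_Q\colon Q \to Q$, identity on $\partial Q$, by piecewise-linearly shifting the chosen median by $\delta = c\eta s$, and tapering the shift smoothly to zero on the side edges of $Q$ via a standard interpolation layer. A direct calculation gives $\psi_Q$ bi-Lipschitz constant $\leq 1+\eta$, and extending $\psi_Q$ by the identity outside $Q$ yields a $\phi$ satisfying
\[
\lambda(\phi(A)) - \lambda(A) \ \geq \ c'\eta \, \lambda(Q)\,(\rho_{Q^+} - \rho_{Q^-}) \ \geq \ c''\eta\,\varepsilon_0(\gamma,\gamma'),
\]
establishing properties (1)--(3) with $\varepsilon = c''\eta\,\varepsilon_0$.

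The principal obstacle is the pigeonhole step in task (a) when $A$ displays exact axis-aligned symmetries that make $\rho_{Q^+} = \rho_{Q^-}$ for every axis-aligned sibling pair of every dyadic $Q$ — the prototypical example being a diagonal strip, whose $x$- and $y$-marginals are both uniform. In this case the axis-aligned contributions to $D_k$ may all vanish while the total $L^2$-deviation remains $\geq \gamma\gamma'$. I would address this by either (i) refining to scales small enough that the Lebesgue density theorem forces each sub-square's density to lie near $\{0,1\}$, thereby breaking any such exact four-fold symmetry, or (ii) performing the median-shift stretching along a non-axis-aligned direction inside a $Q$ strictly separated from $\partial[0,1]^2$, so that no boundary compatibility constrains the shear direction. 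A finite case analysis combining these two tactics delivers the uniform gain $\varepsilon(\gamma,\gamma',\eta) > 0$.
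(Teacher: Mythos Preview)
Your pigeonhole step in task~(a) is the gap, and it cannot be repaired within the single-box framework you propose. You claim that from $\sum_{k\ge 0} D_k \ge \gamma\gamma'$ one can extract a dyadic $Q$ with $\lambda(Q)\,|\rho_{Q^+}-\rho_{Q^-}|\ge \varepsilon_0(\gamma,\gamma')$. This is false uniformly in $A$. Take a checkerboard $A_n$ at scale $2^{-n}$: for every dyadic $Q$ of side $\ge 2\cdot 2^{-n}$ both axis-aligned halves have density exactly $1/2$, while for $Q$ of side $\le 2^{-n}$ one has $\lambda(Q)\le 4^{-n}$; so $\sup_Q \lambda(Q)\,|\rho_{Q^+}-\rho_{Q^-}|\le 4^{-n+1}\to 0$. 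More generally, one can build $A$ so that the density martingale $\rho_{n,X}$ moves by increments of size $N^{-1/2}$ over $N$ levels before hitting $\{0,1\}$; then $D_k\asymp N^{-1}$ for each relevant $k$, and even acting simultaneously on \emph{all} boxes at a single level yields a gain $\asymp \eta\sum_Q \lambda(Q)|\rho_{Q^+}-\rho_{Q^-}|\asymp \eta N^{-1/2}$, which is not bounded below uniformly. Your fix~(i) via Lebesgue density does not help: the scale at which densities approach $\{0,1\}$ depends on $A$ and gives no lower bound on $\lambda(Q)$. Your fix~(ii) via non-axis-aligned shears addresses the symmetry obstruction in the checkerboard example but is orthogonal to the real difficulty, which is that the $L^2$ deviation can be spread over arbitrarily many scales with tiny contributions each.

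This is precisely the difficulty the paper's machinery is built to overcome. The paper does \emph{not} act at a single level; it composes stretchings $\varphi_1\circ\cdots\circ\varphi_n$ across many dyadic levels, with $\delta$ at each level proportional to the local density increment $\Delta_{i,x}$. The composition is stopped at a random time $\tau$ governed by four conditions (accumulated squared increments, a single large increment, density deviation, and the Jacobian leaving a neighbourhood of $\mathbf{I}$). The central work is showing, via a matrix-martingale argument and Freedman's inequality, that with probability $\ge 2/3$ the Jacobian stays within $1+\eta$ of the identity until one of the first three stopping conditions triggers. Your Case~2-style single-level stretch corresponds to only one of the paper's three cases ($\tau=\tau_2$, a large increment somewhere); the cases where $\tau=\tau_1$ or $\tau=\tau_3$ handle exactly the ``variance spread thinly across scales'' sets on which your argument breaks, and they require the multi-scale composition and its Lipschitz control.
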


\subsection{Related Works}
The question of existence of bi-Lipschitz homeomorphisms between different subsets of $\R^n$, satisfying certain conditions, has been studied classically, notably by McMullen \cite{CM97}. It is shown in that paper that there exists a positive function $f$ on $\R^n$, $n\geq 2$, such that there does not exist a bi-Lipschitz homeomorphism $\phi:\R^n\rightarrow \R^n$ with ${\rm{det}}~D\phi=f$. It is also proved that there exists separated nets in $\R^n$ ($n\geq 2$) which cannot be mapped into $\Z^n$ in a bi-Lipschitz way.

Shortly before uploading this paper we came across a very recent paper \cite{Kneuss} on arXiv which investigates related questions and proves a variant of Theorem \ref{t:stretching} for sets $A$ of sufficiently small measure (Proposition 11, \cite{Kneuss}), as well as much more. Their proof uses a covering lemma based on a result of \cite{ACP04}, which is no longer true in higher dimensions. Our proof is different and we believe can be adapted to work in higher dimensions as well. Moreover, the stretching result only for sufficiently small sets is not sufficient for our purposes in proving rough isometry. Our work was independent of \cite{Kneuss}.

The question considered in this paper has connections to several different problems in analysis, an interested reader is referred to \cite{Kneuss} and the references therein for more details.

\subsection{Proofs of Theorem \ref{t:stretching} and Theorem \ref{t:riuse}}
In this subsection we establish Theorem \ref{t:stretching} from Theorem \ref{t:stretchingfixed} and Theorem \ref{t:riuse} as a consequence of Theorem \ref{t:stretching}. We start with the easy proof of Theorem \ref{t:stretching} assuming Theroem \ref{t:stretchingfixed}.

\begin{proof}[Proof of Theorem \ref{t:stretching}]
Fix $0<\gamma < 1-\gamma' <1$. Fix a set $A$ with $\lambda(A)=\gamma$. Define a sequence of bijections $\psi_{i}$ on $[0,1]^2$ and a sequence of Borel sets $A_{i}$ as follows. Set $A=A_1$. If $\lambda(A_i)< 1-\gamma'$, then define $\psi_{i}=\phi_{A_i}$ where $\phi_{A_i}$ is given by Theorem \ref{t:stretchingfixed}. If $\lambda(A_{i})\geq 1-\gamma'$, set $\psi_{i}$ to be the identity map. Set $A_{i+1}=\psi_{i}(A_i)$. Fix $\eta>0$. Set $n_0=\lceil \frac{1-\gamma' -\gamma}{\varepsilon(\gamma,\gamma',\eta)}+1\rceil$. It follows from Theorem \ref{t:stretchingfixed} that the function $\phi_0=\psi_{n_0}\circ \cdots \circ \psi_1$ satisfies all the conditions of Theorem \ref{t:stretching} with $C_0= (1+\eta)^{n_0}$. This completes the proof.
\end{proof}

Next we show how Theorem \ref{t:riuse} follows from Theorem \ref{t:stretching}.

\begin{proof}[Proof of Theorem \ref{t:riuse}]
Fix $X$ such that $k_X(n)\geq \delta n^2$. Let $A$ denote the union of unit squares in $[0,n]^2$ that contain points of $X$. Now fix $\kappa>0$ such that $(1-e^{-\kappa ^2})^{1/\kappa^2}> e^{-\epsilon}$ and $\delta'>0$ such that $(1-e^{-\kappa ^2})^{(1-\delta')\kappa^{-2}}e^{-\delta'}\geq e^{-\epsilon}$. Since $A$ has measure at least $\delta n^2$, by Theorem \ref{t:stretching} there exists a $C(\delta,\delta')$ bi-Lipschitz map $\phi$ from $[0,n]^2$ to $[0,n]^2$ which is identity on the boundary and such that $\lambda(\phi(A))\geq (1-\delta')n^2$ (if $\lambda(A)\geq (1-\delta')n^2$, we take $\phi$ to be the identity map). Now let $B$ denote the union of squares in $[0,n]^2$ of the form $[\kappa j, \kappa(j+1)]\times [\kappa \ell, \kappa(\ell+1)]$ that intersect $\phi(A)$, and let $k'_Y(n)$ denote their numbers. Clearly $k'_Y(n)\geq (1-\delta')n^2\kappa^{-2}$. Now let $\mathcal{E}$ denote the event that each square of the form $[\kappa j, \kappa(j+1)]\times [\kappa \ell, \kappa(\ell+1)]$ contained in $B$ contains at least one point of $Y$ and there are no points of $Y$ in $[0,n]^2$ outside $B$. It is easy to see that on $\mathcal{E}$, there exists $M=M(\delta,\epsilon)$, $D=D(\delta, \epsilon)$, $C=C(\delta,\epsilon)$ such that we have $X_n\hookrightarrow_{(M,D,C)} Y_n$. Hence

$$\P[X_n\hookrightarrow_{(M,D,C)} Y_n\mid X_n]\geq \P[\mathcal{E}\mid X] \geq (1-e^{-\kappa^2})^{(1-\delta')n^2\kappa ^{-2}}e^{-\delta' n^2}\geq e^{-\epsilon n^2}. $$

This completes the proof.
\end{proof}

Rest of this paper is devoted to proving Theorem \ref{t:stretchingfixed}. From now on, $\gamma$ and $\gamma'$ and $\eta$ will be fixed positive numbers such that $0<\gamma<1-\gamma' <1$. Also we shall fix a Borel set $A\subseteq [0,1]^2$ with $\lambda(A)\in [\gamma, 1-\gamma']$.

\subsection{An Overview of the Proof of Theorem \ref{t:stretchingfixed}}
To prove Theorem \ref{t:stretchingfixed} one needs to construct a map satisfying the required conditions which expands the regions where the set $A$ has higher density and compress the regions where set $A$ has lower density. For example it is not hard to see that one can construct such a function if the set $A$ is contained in, say, the left half of the unit square (i.e., $[0,\frac{1}{2}]\times [0,1]$) then the conclusion of Theorem \ref{t:stretchingfixed} holds. Similar construction works for sets which have different densities in the left half and the right half of $[0,1]^2$, (see \S~\ref{s:aux}). For more complicated sets we use the same idea recursively at different scales.

In \S~\ref{s:aux}, we construct an auxiliary family of bijections $\{\Psi_{\delta}\}_{\delta\in (-1,1)}$ from $[0,1]^2$ onto itself which are identity on the boundary and which stretches all regions within the left half of $[0,1]^2$ by the same factor $1+\delta$.  The functions $\Psi_{\delta}$ also satisfy certain regularity conditions, in particular these are bi-Lipschitz functions with Lipschitz constant $1+O(\delta)$. We divide the unit square into dyadic squares and rectangles at different scales recursively such that each square (rectangle) at a level is divided in two halves by the rectangles (squares) in the next level (see \S~\ref{s:dyadic}). We use the auxiliary functions $\Psi_{\delta}$ to construct bijections on the dyadic squares (rectangles) at different levels which stretches each half of these dyadic squares (which are dyadic rectangles at the next scale) proportionally to the density of $A$ in these dyadic rectangles. Finally we compose these functions (see \S~\ref{s:mart}) up to a large number of levels (with the number of levels depending on the set $A$ and the location of the dyadic square) to obtain the required stretching function.  However, the control on the Lipschitz constant worsens with the number of compositions and it is necessary to establish that one can maintain a uniform control over the Lipschitz constant.  While the construction is deterministic, our proof makes use of a probabilistic analysis.

We let $X$ denote a uniformly chosen point in $[0,1]^2$. Let us reveal sequentially for $n\geq 1$, which dyadic square at level $n$ it belongs to. The expected density of $X$ is then a martingale. We make use of this martingale to analyse the function described above. Roughly we show that there exists a stopping time $\tau$ such that if we compose the stretching functions up to level $\tau$, then the area of $A$ is increased by $\epsilon$, however the bi-Lipschitz constant is still controlled by $1+\eta$. This argument is spanned over \S~\ref{s:stop}, \S~\ref{s:estuv}, \S~\ref{s:esty} and finally we complete the proof of Theorem \ref{t:stretchingfixed} in \S~\ref{s:final}.

{\bf A word about notation: parameters and constants}
In the course of the proof of Theorem \ref{t:stretchingfixed} over the next few sections, we shall have occasion of using many constants and parameters. By an absolute constant we shall mean a constant that depends  only on $\gamma$ and $\gamma'$. We shall denote by $C,c$ absolute constants whose values may vary through the proof while numbered constants $C_1,C_2,\ldots,$  and  $\varepsilon_1$, $\varepsilon_2,\ldots,$ will denote fixed constants whose values are fixed throughout the paper and in particular are independent of the set $A$.  When we use a matrix norm for a matrix $\mathbf{M}$, unless otherwise stated, $||\mathbf{M}||$ will denote its $\ell_{\infty}$ norm, i.e., the maximum of absolute values of its entries.


\section{The Stretching Function}
\label{s:aux}
To construct the map $\phi$ we shall need an auxiliary stretching map $\Psi_{\delta}$ where $\delta\in (-1,1)$ is a stretching parameter. $\Psi_{\delta}$ will be a bijection on $[0,1]^2$ which is identity on the boundary of $[0,1]^2$. We now move towards the construction of $\Psi_{\delta}$.

%
%
%
%
%

\subsection{Construction of $\Psi_{\delta}$}
For the rest of this subsection, fix $\delta\in (-1,1)$. We first construction the following parametrisation.

\subsection*{Parametrisation}
Let $h: [0,\frac{1}{2})\rightarrow [0,\infty)$ be a function with the following properties.

\begin{enumerate}
\item $h(0)=0$, and $h(r)=0$ for all $r\leq \frac{1}{10}$.
\item $h(r)\rightarrow \infty$ as $r\rightarrow \frac{1}{2}$.
\item $h$ is (weakly) increasing.
\item $h$ is thrice continuously differentiable with $(h'(r))^2=O((r+h(r))^3)$, $h''(r)h'(r)=O((r+h(r))^3)$ and $h^{(3)}(r)=O((r+h(r))^2)$.
\end{enumerate}

It is easy to see that such an $h$ exists, e.g., we could take a function that behaves like $e^{-1/(r-1/10)}$ near $\frac{1}{10}$ and like $(r-\frac{1}{2})^{-6}$ near $\frac{1}{2}$. Fix such an $h$ for the rest of this section.

Clearly, there is a unique $r_0\in (0,\frac{1}{2})$ such that
$$ 2r_0h(r_0)+r_0^2=\frac{1}{4}.$$

Define $\Theta(r)$ as follows.
\begin{equation*}
\Theta(r)=
\begin{cases}
\arccos \frac{h(r)}{h(r)+r}~{\rm{if}}~r\leq r_0,\\
\arcsin \frac{1}{2(h(r)+r)}~{\rm{if}}~ r>r_0.
\end{cases}
\end{equation*}

\subsection*{The $r-\theta$ parametrisation}
Consider the bijection $K:[0,1]\times [0,\frac{1}{2})\setminus \{(1/2,0)\} \rightarrow (0,\frac{1}{2})\times (-1,1)$ defined as follows. We have $(x_1,x_2)\mapsto (r,\theta)$ defined by

$$x_1= \frac{1}{2}+(r+h(r))\sin (\theta\Theta(r)),x_2= (r+h(r))\cos (\theta \Theta(r))-h(r).$$

We shall work with this parametrisation in the lower half of the unit square. The level lines of the function $r$ and their  reflections about the line $x_2=\frac{1}{2}$ is shown in Figure \ref{f:flowlines}.

\begin{figure*}[h]
\begin{center}
\includegraphics[height=8cm,width=10cm]{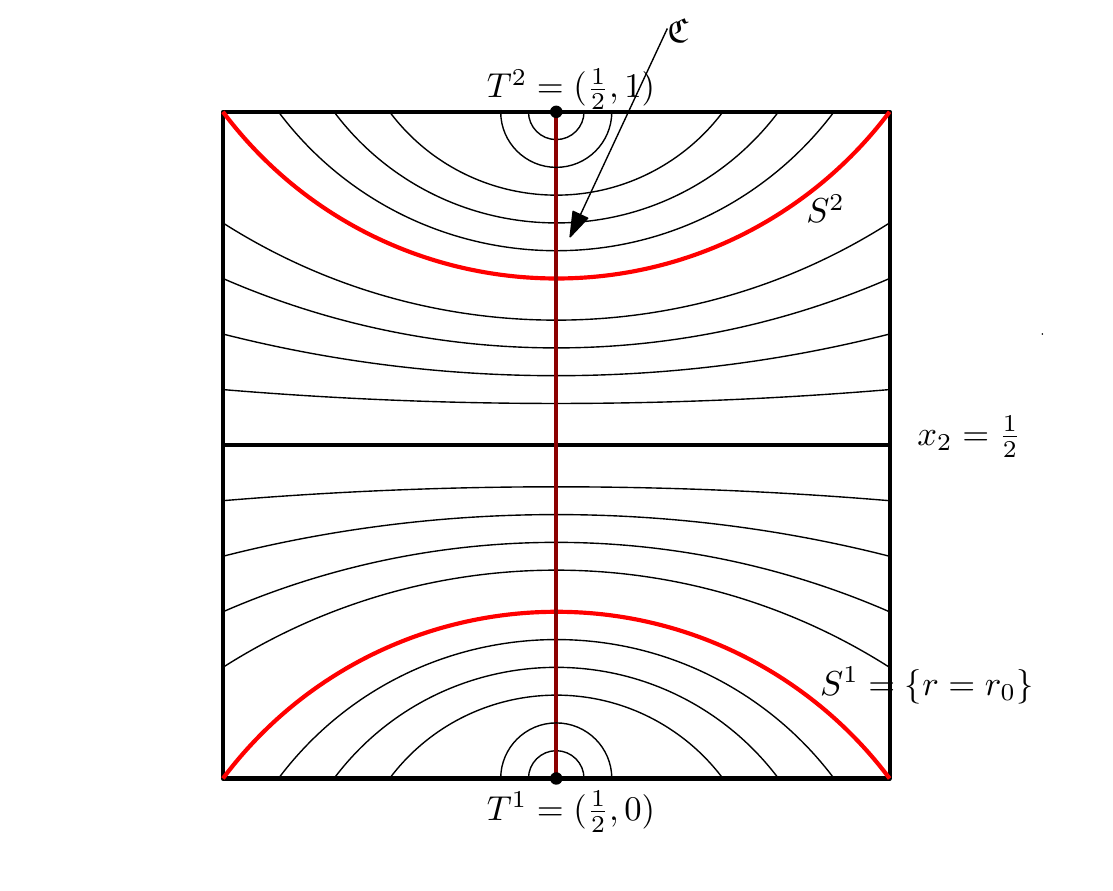}
\caption{Level lines of the function $r$ in the $r-\theta$ parametrisation and their reflections about the line $x_2=\frac{1}{2}$}
\label{f:flowlines}
\end{center}
\end{figure*}

Notice that this transformation is $C^1$ except on $\{r=r_0\}$ and the Jacobian matrix for the transformation $K^{-1}$ is given by

\[
J(r,\theta)=\left[\begin{array}{cc}
(1+h'(r))\sin (\theta \Theta(r))+(r+h(r))\Theta'(r)\theta\cos(\theta\Theta(r))& (r+h(r))\Theta(r)\cos (\theta\Theta(r))\\
(1+h'(r))\cos (\theta \Theta(r))-(r+h(r))\Theta'(r)\theta\sin(\theta\Theta(r))-h'(r)& -(r+h(r))\Theta(r)\sin(\theta\Theta(r))
\end{array}
\right].
\]

The (absolute value) of the determinant of the Jacobian is given by

$$\Theta(r)(r+h(r))(1+h'(r)-h'(r)\cos(\theta\Theta(r))).$$

\subsection*{Constructing $\Psi_{\delta}$}
Let $\delta\in (-1,1)$ be fixed. For $r\in (0,\frac{1}{2})$, define the function $g_{r,\delta}=g_{r}:[-1,1]\rightarrow [-1,1]$ with the following properties.

\begin{enumerate}
\item $g_r$ is an increasing bijection with $g(-1)=-1$ and $g(1)=1$.
\item For each $\ell\in [-1,0]$, we have

\begin{equation}
\label{e:grtheta1}
(1+\delta) \int_{-1}^{\ell} \left(1+h'(r)-h'(r)\cos(\theta\Theta(r))\right)~d\theta  = \int_{-1}^{g_{r}(\ell)} \left(1+h'(r)-h'(r)\cos(\theta\Theta(r))\right)~d\theta .
\end{equation}

\item For each $\ell\in [0,1]$ we have
\begin{equation}
\label{e:grtheta2}
(1-\delta) \int_{\ell}^{1} \left(1+h'(r)-h'(r)\cos(\theta\Theta(r))\right)~d\theta = \int_{g_{r}(\ell)}^{1} (1+h'(r)-h'(r)\cos(\theta\Theta(r)))~d\theta .
\end{equation}
\end{enumerate}

\begin{figure*}[h]
\begin{center}
\includegraphics[height=8cm,width=16cm]{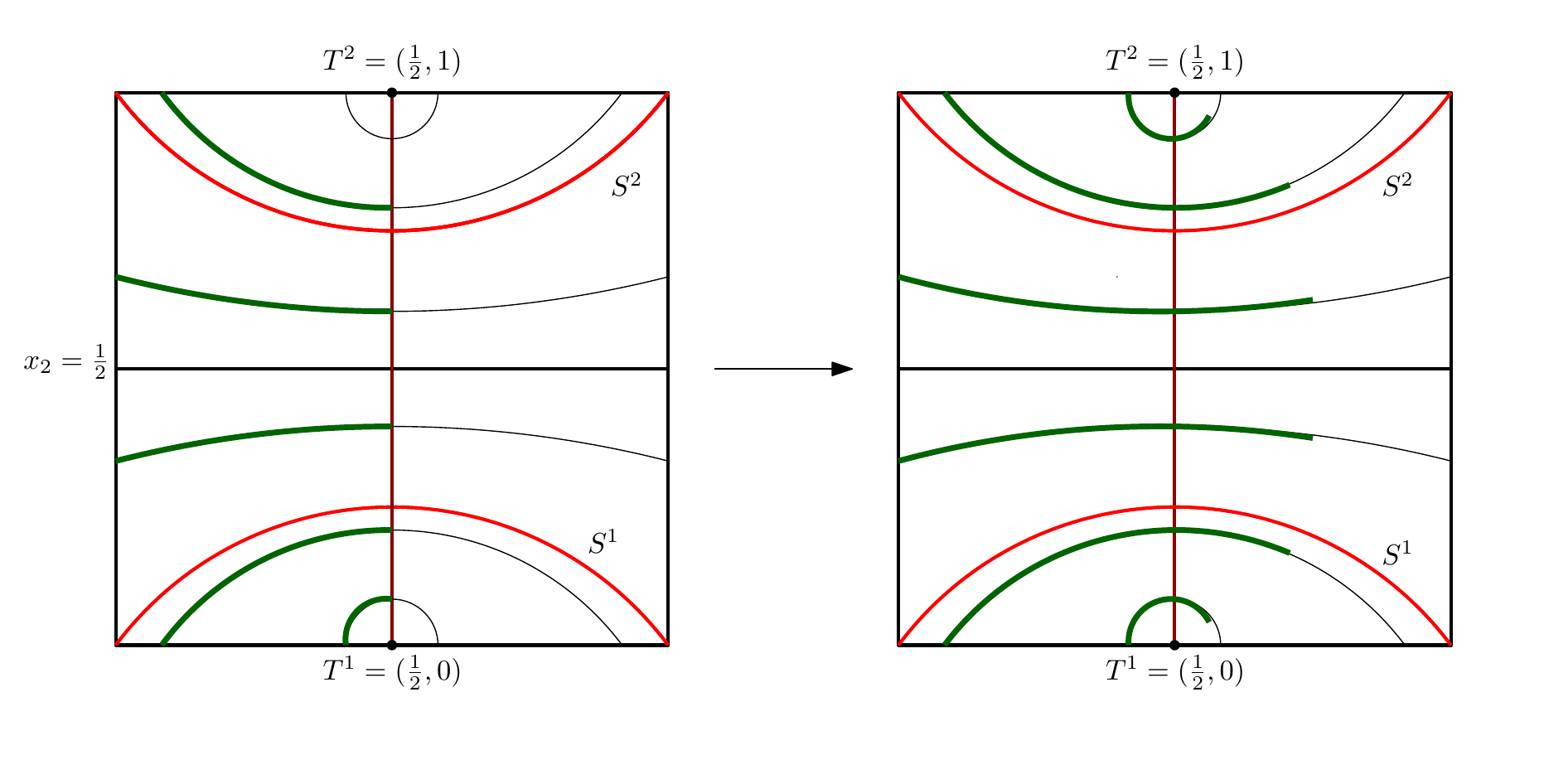}
\caption{For $\delta>0$, the functions $g_r$ stretch the left half of the level lines of the function $r$}
\label{f:psidelta}
\end{center}
\end{figure*}

That such a function exists and is unique follows from the facts that for each $r\in (0,\frac{1}{2})$ the integrand
$1+h'(r)-h'(r)\cos(\theta\Theta(r))$ is strictly positive on $[-1,1]$, is invariant under the transformation $\theta\mapsto -\theta$ and the hypothesis that $\delta\in (-1,1)$. Now define the bijection $H_{\delta}=H: (0,\frac{1}{2})\times [-1,1]\rightarrow [0,\frac{1}{2})\times [-1,1]$ defined by $(r,\theta)\mapsto (r,g_r(\theta))$. Define $\Psi_{\delta}$ on $[0,1]\times [0,\frac{1}{2})\setminus \{(1/2,0)\}$ by
$$\Psi_{\delta}(x_1,x_2)=(\Psi_{\delta}^{1}(x_1,x_2),\Psi_{\delta}^{2}(x_1,x_2))=K^{-1}(H_{\delta}(K(x_1,x_2))).$$
We define $\Psi_{\delta}(1/2,0)=(1/2,0)$ and extend $\Psi_{\delta}$ to $[0,1]^{2}$ in the following way. For $(x_1,x_2)\in [0,1]^2$ with $x_2>\frac{1}{2}$ define
$$\Psi_{\delta}(x_1,x_2)=(\Psi_{\delta}^{1}(x_1,x_2),\Psi_{\delta}^{2}(x_1,x_2))=(\Psi_{\delta}^{1}(x_1,1-x_2),1-\Psi_{\delta}^{2}(x_1,1-x_2)).$$
On the line $x_2=\frac{1}{2}$, we set
\begin{equation*}
\Psi_{\delta}(x_1,\frac{1}{2})=(\Psi_{\delta}^{1}(x_1,\frac{1}{2}),\Psi_{\delta}^{2}(x_1,\frac{1}{2}))=
\begin{cases}
(x_1(1+\delta),\frac{1}{2})~\text{for}~x_1\leq \frac{1}{2},\\
(\frac{1+\delta}{2}+(1-\delta)(x_1-\frac{1}{2}),\frac{1}{2})~\text{for}~x_1\geq \frac{1}{2}.
\end{cases}
\end{equation*}

\subsection{Basic properties of $\Psi_{\delta}$}
Over the next few lemmas we list useful properties of the function $\Psi_{\delta}$ as constructed above. The next lemma is immediate and we omit the proof.

\begin{lemma}
\label{l:psideltabasic}
For each $\delta\in (-1,1)$, $\Psi_{\delta}$ as constructed above is a bijection from $[0,1]^2$ onto itself and $\Psi_{\delta}(x)=x$ for all $x\in \partial [0,1]^2$,
\end{lemma}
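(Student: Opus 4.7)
The plan is to verify the two claimed properties by peeling off the three pieces of the definition (lower half, upper half, and the mid-line $x_2=1/2$) and checking that each piece is a bijection onto the corresponding region of the target, and that the three pieces agree on their common boundaries.

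First I would check that $H_\delta:(0,\tfrac12)\times[-1,1]\to(0,\tfrac12)\times[-1,1]$ is a bijection. Since $1+h'(r)-h'(r)\cos(\theta\Theta(r))$ is strictly positive on $[-1,1]$ for every fixed $r\in(0,\tfrac12)$, both integral equations \eqref{e:grtheta1} and \eqref{e:grtheta2} define $g_r$ uniquely and continuously as a strictly increasing map $[-1,1]\to[-1,1]$ with $g_r(-1)=-1$, $g_r(0)=0$, $g_r(1)=1$. Hence $H_\delta=(r,\theta)\mapsto(r,g_r(\theta))$ is a bijection, and $\Psi_\delta=K^{-1}\circ H_\delta\circ K$ is a bijection of $[0,1]\times[0,\tfrac12)\setminus\{(1/2,0)\}$ onto itself (here I would briefly note that $K$ is a bijection onto the stated image, separating the two regimes $r\le r_0$ and $r>r_0$ in which the defining formula for $\Theta$ differs; the consistency at $r=r_0$ is checked by noting both branches give $\Theta(r_0)$ as the angle at which the circular arc of radius $r_0+h(r_0)$ centred at $(1/2,-h(r_0))$ hits the corner $(0,0)$, using the defining relation $2r_0h(r_0)+r_0^2=1/4$).

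Next I would confirm the reflection piece: the map $(x_1,x_2)\mapsto(x_1,1-x_2)$ is a bijection of the upper half $[0,1]\times(1/2,1]$ with the lower half $[0,1]\times[0,1/2)$, so the extension of $\Psi_\delta$ to the upper half is automatically a bijection of the upper half onto itself. For the mid-line $x_2=1/2$, the piecewise linear formula is a bijection of $[0,1]\times\{1/2\}$ onto itself because it sends $0\mapsto 0$, $1/2\mapsto(1+\delta)/2$, $1\mapsto 1$ with positive slopes $1+\delta$ and $1-\delta$ on the two pieces. To glue the three pieces I would verify the limit $\Psi_\delta(x_1,x_2)\to$ the mid-line formula as $x_2\uparrow 1/2$: in the $(r,\theta)$ coordinates this corresponds to $r\uparrow 1/2$, at which $\Theta(r)\to 0$ and $h(r)\to\infty$, so $\theta\Theta(r)\to 0$, and a first-order expansion of \eqref{e:grtheta1}–\eqref{e:grtheta2} gives $g_r(\theta)\to(1+\delta)\theta$ for $\theta\le 0$ and $g_r(\theta)-1\to(1-\delta)(\theta-1)$ for $\theta\ge 0$; substituting into the formula for $K^{-1}$ reproduces exactly the piecewise linear definition on $x_2=1/2$.

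For the boundary property, I would trace which level sets of $K$ correspond to $\partial[0,1]^2$. By the defining formulas, $\theta=1$ parametrises the portion of $\partial[0,1]^2$ below $x_2=1/2$ running from the origin along $x_2=0$ to the corner $(1,0)$ (for $r\le r_0$) and then up the side $x_1=1$ to the midpoint $(1,1/2)$ (for $r>r_0$); symmetrically, $\theta=-1$ parametrises the path from $(1/2,0)^{+}$ along $x_2=0$ to the origin and then up $x_1=0$ to $(0,1/2)$. Since $g_r(\pm 1)=\pm 1$ by construction, $H_\delta$ fixes $\theta=\pm 1$, so $\Psi_\delta$ is the identity on all of $\partial[0,1]^2\cap\{x_2<1/2\}$. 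The reflection then handles $\partial[0,1]^2\cap\{x_2>1/2\}$, the explicit value $\Psi_\delta(1/2,0)=(1/2,0)$ handles the one excluded point, and the mid-line formula evaluated at $x_1=0$ and $x_1=1$ yields $(0,1/2)$ and $(1,1/2)$ respectively, so the identity holds on all four sides.

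The only place that requires any real checking is the gluing across $x_2=1/2$ and at $r=r_0$; both are short computations of the sort sketched above. The rest is bookkeeping, which is presumably why the authors call the lemma immediate.
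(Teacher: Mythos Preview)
Your verification is essentially correct and is precisely the kind of routine check the paper has in mind when it calls the lemma immediate and omits the proof. Two small slips worth fixing: first, $g_r(0)=0$ is false for $\delta\neq 0$---what is true (and what you actually need) is that the two defining equations \eqref{e:grtheta1} and \eqref{e:grtheta2} give the \emph{same} value at $\ell=0$, which follows from the evenness of the integrand in $\theta$ (adding the two identities at $\ell=0$ recovers the full integral); second, the $\theta=1$ boundary path starts at $(1/2,0)^{+}$, not at the origin.
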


\begin{lemma}
\label{l:psideltacont}
$\Psi_{\delta}$ as defined above is continuous on $[0,1]^2$.
\end{lemma}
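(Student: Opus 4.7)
The plan is to verify continuity region by region, with the main work being at the ``seam'' line $x_2 = 1/2$ where $\Psi_\delta$ is defined by two separate formulas.

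First I would handle the open lower half-square $U = [0,1]\times [0,1/2) \setminus \{(1/2,0)\}$. The parametrisation $K$ is a homeomorphism from $U$ onto $(0,1/2)\times(-1,1)$ (continuity of $K$ is immediate from the formulas, and continuity of $K^{-1}$ follows from the fact that the Jacobian is non-degenerate, together with direct inversion: for $r\le 1/10$ one has $r = \sqrt{(x_1-1/2)^2 + x_2^2}$, and for larger $r$ the relation can be inverted using monotonicity of $r\mapsto (r+h(r))$). Note $\Theta$ is continuous across $r_0$ since the defining relation $2r_0h(r_0)+r_0^2 = 1/4$ makes $\arccos(h(r_0)/(h(r_0)+r_0))$ agree with $\arcsin(1/(2(h(r_0)+r_0)))$. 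Next, $H_\delta(r,\theta)=(r,g_r(\theta))$ is jointly continuous: for fixed $r$, $g_r$ is continuous in $\theta$ by definition (being monotone increasing with continuous inverse via its defining integral); for joint continuity, the integrand $F(r,\theta) = 1+h'(r)-h'(r)\cos(\theta\Theta(r))$ is jointly continuous and uniformly bounded below on compact sub-rectangles of $(0,1/2)\times[-1,1]$, so the implicit relations (\ref{e:grtheta1})--(\ref{e:grtheta2}) define $g_r(\ell)$ continuously in $(r,\ell)$. Hence $\Psi_\delta = K^{-1}\circ H_\delta \circ K$ is continuous on $U$.

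Next I would verify continuity at the boundary points. At the isolated point $(1/2,0)$, as $(x_1,x_2)\to (1/2,0)$ we have $r\to 0$, and since $|g_r(\theta)|\le 1$ and $h(r)=0$ for $r\le 1/10$, the formulas give $|\Psi_\delta(x_1,x_2) - (1/2,0)|\le 2r\to 0$. On the boundary pieces $x_2 = 0$ (which corresponds to $\theta = \pm 1$ with $r\le r_0$) and $x_1 \in \{0,1\}$ (which corresponds to $\theta=\pm 1$ with $r>r_0$), the defining integrals force $g_r(\pm 1)=\pm 1$, so $\Psi_\delta$ agrees with the identity there, matching Lemma~\ref{l:psideltabasic}.

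The main obstacle is continuity across the line $x_2 = 1/2$, which in the $(r,\theta)$-parametrisation corresponds to $r\to 1/2^-$. As $r\to 1/2$, $h(r)\to\infty$ and $\Theta(r)\to 0$, and a direct computation gives $x_1 = 1/2 + (r+h(r))\sin(\theta\Theta(r))\to 1/2 + \theta/2$ (since $(r+h(r))\Theta(r)\to 1/2$ from the $\arcsin$ formula) and $x_2\to 1/2$. I would check that $g_r(\theta)\to g_\infty(\theta)$ where $g_\infty(\theta) = \delta + (1+\delta)\theta$ for $\theta\le 0$ and $\delta + (1-\delta)\theta$ for $\theta\ge 0$. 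For this, the regularity hypotheses on $h$ give $h'(r)\Theta(r)^2 = O((r+h(r))^{-1/2})\to 0$, so $F(r,\theta)\to 1$ uniformly in $\theta$ as $r\to 1/2$. Taking the limit in (\ref{e:grtheta1})--(\ref{e:grtheta2}) yields $(1+\delta)(1+\ell) = 1 + g_\infty(\ell)$ and $(1-\delta)(1-\ell) = 1-g_\infty(\ell)$, confirming the explicit formula on $x_2=1/2$ coincides with the limit. This ensures $\Psi_\delta$ is continuous across $x_2=1/2$; extending by reflection handles the upper half, and continuity across $x_2=1/2$ from above follows since $\Psi_\delta^2 \to 1/2$ from both sides.
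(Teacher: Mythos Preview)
Your proof is correct and follows essentially the same approach as the paper: continuity on the open half-squares via the composition $K^{-1}\circ H_\delta\circ K$, then matching the limit as $r\to 1/2^-$ with the explicit piecewise-linear definition on $x_2=1/2$. Your argument is in fact slightly more detailed than the paper's (you justify the uniform limit $F(r,\theta)\to 1$ via $h'(r)\Theta(r)^2\to 0$, check continuity of $\Theta$ at $r_0$, and handle the twist point $(1/2,0)$ explicitly), but the structure is the same.
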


Proof of this lemma is deferred to \ref{s:appa}.  The next lemma shows that the left and rights sides are stretched uniformly by ratios of $1+\delta$ and $1-\delta$ respectively.

\begin{lemma}
\label{l:psideltaarea}
For each $\delta\in (-1,1)$, $\Psi_{\delta}$ defined as above it satisfies the following properties.

\begin{enumerate}
\item[(i)] For $\Lambda_{L}=[0,\frac{1}{2}]\times[0,1]$, we have $\lambda(\Psi_{\delta}(\Lambda_{L}))=(1+\delta)\lambda(\Lambda_{L})$.

\item[(ii)] For $\Lambda_{L}$ as above and $\Lambda_{R}=[\frac{1}{2},1]\times [0,1]$, we have for $i=L,R$, and for $B\subseteq \Lambda_{i}$, $B$ measurable,
$$\lambda(B)=(1+\delta)\lambda(B \cap \Lambda_{L}) + (1-\delta)\lambda(B \cap \Lambda_{R}).$$
\end{enumerate}
\end{lemma}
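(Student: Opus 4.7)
The plan is to work in the $(r,\theta)$ parametrization supplied by $K$ and reduce the area identity to one that falls out by differentiating the defining relations \eqref{e:grtheta1} and \eqref{e:grtheta2} for $g_r$. First, the reflection-symmetric definition of $\Psi_\delta$ across $x_2 = 1/2$ lets me restrict attention to measurable sets $B$ contained in the lower half-square $L = [0,1]\times[0,1/2]$; the upper half is handled by the measure-preserving reflection that sends $\Lambda_L \cap L$ to $\Lambda_L \cap ([0,1]\times[1/2,1])$ and similarly for $\Lambda_R$.

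For $B\subseteq L$, I would push $B$ through $K$ to $\tilde B\subseteq(0,1/2)\times[-1,1]$; the omitted point $(1/2,0)$ and the segment $\{x_2=1/2\}$ are Lebesgue null. Since $\Psi_\delta$ acts as $(r,\theta)\mapsto(r,g_r(\theta))$ in these coordinates, two successive changes of variables (first $K^{-1}$ to pass from $(r,\theta)$ back to $(x_1,x_2)$, then $\theta' = g_r(\theta)$ at fixed $r$) yield
$$\lambda(\Psi_\delta(B)) = \int_{\tilde B} |\det J(r, g_r(\theta))|\,g_r'(\theta)\,dr\,d\theta.$$

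The crux is the Jacobian identity obtained by differentiating \eqref{e:grtheta1} and \eqref{e:grtheta2} at $\ell=\theta$:
$$\bigl(1+h'(r)-h'(r)\cos(g_r(\theta)\Theta(r))\bigr)g_r'(\theta) = (1\pm\delta)\bigl(1+h'(r)-h'(r)\cos(\theta\Theta(r))\bigr),$$
with $+\delta$ for $\theta \in [-1,0]$ and $-\delta$ for $\theta\in[0,1]$. Multiplying through by $\Theta(r)(r+h(r))$ turns this into
$$|\det J(r,g_r(\theta))|\,g_r'(\theta) = (1\pm\delta)\,|\det J(r,\theta)|.$$
Since $\Theta(r) \in (0,\pi/2]$, the sign of $\sin(\theta\Theta(r))$ agrees with that of $\theta$, so $K^{-1}(\{\theta\leq 0\})$ and $K^{-1}(\{\theta\geq 0\})$ coincide (up to null sets) with $\Lambda_L\cap L$ and $\Lambda_R\cap L$ respectively. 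Splitting the preceding integral at $\theta=0$ and substituting the Jacobian identity thus gives the scaling
$$\lambda(\Psi_\delta(B)) = (1+\delta)\lambda(B\cap\Lambda_L) + (1-\delta)\lambda(B\cap\Lambda_R),$$
from which (ii) is immediate and (i) follows by taking $B = \Lambda_L$.

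The main technical care is required in the change-of-variables step, since $K^{-1}$ fails to be $C^1$ across the curve $\{r = r_0\}$ where $\Theta$ switches between its two definitions. This curve is however a Lebesgue null set and is preserved by $\Psi_\delta$ (which acts only in $\theta$ at fixed $r$), so one may apply the change-of-variables formula separately on $\{r<r_0\}$ and $\{r>r_0\}$ where $K^{-1}$ is smooth and add the two contributions, which suffices for the Lebesgue measure identity.
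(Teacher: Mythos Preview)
Your proof is correct and follows essentially the same route as the paper: work in the $(r,\theta)$ coordinates, reduce by reflection symmetry to the lower half, and exploit the defining integral relations \eqref{e:grtheta1}--\eqref{e:grtheta2} for $g_r$ to obtain the $(1\pm\delta)$ scaling. The only cosmetic difference is that the paper applies the integral identity \eqref{e:grtheta1} slice-by-slice after Fubini, whereas you first differentiate it to get the pointwise Jacobian relation $|\det J(r,g_r(\theta))|\,g_r'(\theta)=(1\pm\delta)|\det J(r,\theta)|$ and then integrate; these are equivalent, and your version has the minor advantage of making the change-of-variables step and the treatment of the non-$C^1$ curve $\{r=r_0\}$ explicit.
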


\begin{proof}
We first prove that for all $B\subseteq \Lambda_{L}$, $\lambda(\Psi_{\delta}(B))=(1+\delta)\lambda(B)$. Fix $B\subseteq \Lambda_{L}$. Without loss of generality assume $B\subseteq \tilde{\Lambda}^1$ as well. We have using (\ref{e:grtheta1}),
\begin{eqnarray*}
(1+\delta)\lambda(B)&=&(1+\delta)\int_{0}^{1/2}\int_{0}^{1/2}1_{B}~dx~dy\\
&=&(1+\delta)\int_{K(B)}(r+h(r))(1+h'(r)-h'(r)\cos(\theta\Theta(r)))\Theta(r)~dr~d\theta\\
&=& (1+\delta)\int_{0}^{1/2}(r+h(r))\Theta(r)\left(\int_{\theta\in K(B)_{r}}(1+h'(r)-h'(r)\cos(\theta\Theta(r)))~d\theta\right)~dr\\
&=&\int_{0}^{1/2}(r+h(r))\Theta(r)\left(\int_{\theta\in g_r(K(B)_{r})}(1+h'(r)-h'(r)\cos(\theta\Theta(r)))~d\theta\right)~dr\\
&=&\int_{H(K(B))}(r+h(r))(1+h'(r)-h'(r)\cos(\theta\Theta(r)))\Theta(r)~dr~d\theta\\
&=&\lambda(\Psi_{\delta}(B)).
\end{eqnarray*}

Similarly it can be shown using (\ref{e:grtheta2}) that for all $B\subseteq \Lambda_R$, we have $\lambda(\Psi_{\delta}(B))=(1-\delta)\lambda(B)$. This completes the proof of the lemma.
\end{proof}

\begin{figure*}[h]
\begin{center}
\includegraphics[height=12 cm,width=12 cm]{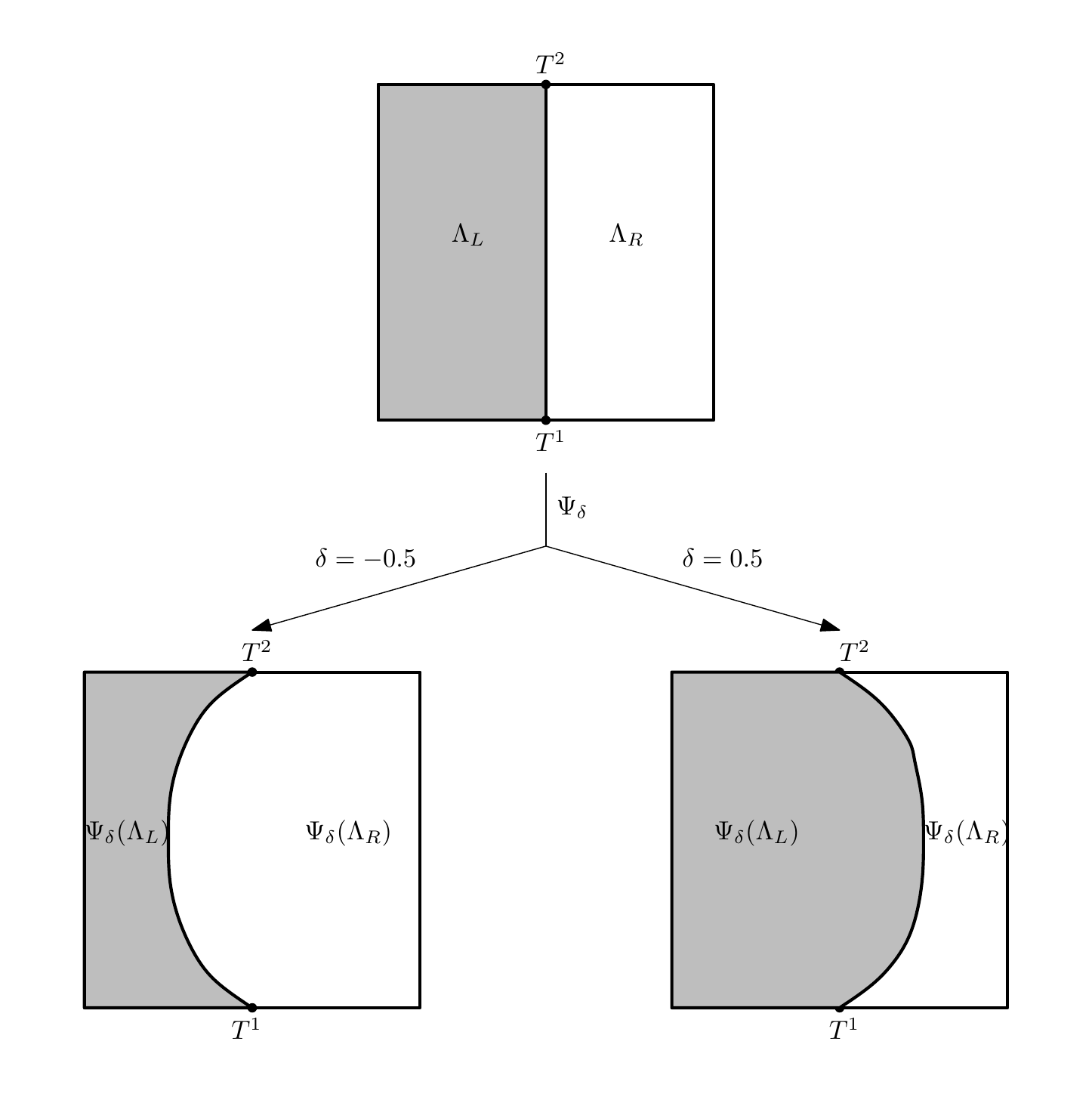}
\caption{$\Psi_{\delta}$ for different values of $\delta$}
\label{f:psideltastretch}
\end{center}
\end{figure*}

\subsection{Smoothness of $\Psi_{\delta}$}

Now we need to establish that $\Psi_{\delta}$ has certain smoothness properties.

\subsubsection{Crack, Twists and Seams: Geometric definitions}
We introduce the following geometric definitions for $[0,1]^2$.
\begin{definition}[Crack and Twists]
\label{d:crack}
The line $x_1=\frac{1}{2}$ is called the \emph{crack} $\mathfrak{C}$ in the unit square $[0,1]^2$. Let $T^1$ and $T^2$ denote the points where the \emph{crack} intersects the boundary, $T^1$ and $T^2$ are called \emph{twists} in $[0,1]^2$. Often we shall call $T=T^1\cup T^2$ as \emph{twists} in $[0,1]^2$. For $r_1<\frac{1}{10}$ small, define $T_r^{1}=K^{-1}(\{r<r_1\})$ and define $T_r^{2}$ to be the reflection of $T_r^{1}$ on the line $x_2=\frac{1}{2}$. We call $T_r=T_r^1\cup T_r^2$ as the \emph{blown up twists} of $[0,1]^2$.
\end{definition}

\begin{definition}[Seams]
\label{d:seams}
Let $S^1=K^{-1}(\{r=r_0\})$ in the parametrisation described above. Let $S^2$ be the reflection of $S_1$ on the line $x_2=1/2$. We shall call $S^1$ and $S^2$ (or, $S=S^1\cup S^2$) \emph{seams} of $[0,1]^2$.
\end{definition}

\subsubsection{Estimates for $\Psi_{\delta}'$}
\begin{proposition}
\label{p:psideltaderivative}
$\Psi_{\delta}$ is differentiable at all points in $[0,1]^2$ except possibly on the \emph{crack} $\mathfrak{C}$ and the \emph{seams} $S$. For $(x_1,x_2)\in [0,1]^2\setminus (\mathfrak{C}\cup S)$, let $\mathbf{J}_{\Psi_{\delta}}(x)$ denote the Jacobian matrix of the transformation $\Psi_{\delta}$ evaluated at $x=(x_1,x_2)$. Then $\mathbf{J}_{\Psi_{\delta}}$ is continuous on $[0,1]^2\setminus (\mathfrak{C}\cup S$ and there exists an absolute constant $C_1$ (not depending on $\delta, x$, possibly depending on $h$) such that
$$||\mathbf{J}_{\Psi_{\delta}}(x)-{\rm{\mathbf I}}||\leq C_1\delta.$$
\end{proposition}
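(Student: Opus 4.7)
The plan is to exploit the factorization $\Psi_\delta = K^{-1}\circ H_\delta \circ K$ on the lower half-square, with the upper half handled by reflection across $x_2 = 1/2$ (which is an isometry and so preserves both smoothness and the size of $\mathbf{J}_{\Psi_\delta}-I$), and to propagate a linear-in-$\delta$ estimate for $\mathbf{J}_{H_\delta}-I$ through the conjugation by $J$. As the parametrisation section already notes, $K$ and $K^{-1}$ are $C^1$ off the seam $\{r=r_0\}$, and $H_\delta(r,\theta)=(r,g_r(\theta))$ is $C^1$ away from $\theta=0$ by the implicit function theorem applied to (\ref{e:grtheta1})--(\ref{e:grtheta2}), using that the integrand $f(r,\theta):=1+h'(r)-h'(r)\cos(\theta\Theta(r))$ is strictly positive. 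Since $\theta=0$ pulls back under $K$ to the crack $\mathfrak{C}$ and $r=r_0$ pulls back to the seam $S$, this immediately yields differentiability of $\Psi_\delta$ and continuity of $\mathbf{J}_{\Psi_\delta}$ on $[0,1]^2\setminus(\mathfrak{C}\cup S)$.

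For the quantitative estimate the chain rule gives
$$\mathbf{J}_{\Psi_\delta}(x)=J(r,g_r(\theta))\,\mathbf{J}_{H_\delta}(r,\theta)\,J(r,\theta)^{-1},\qquad (r,\theta)=K(x),$$
using that $H_\delta$ preserves $r$. Since
$$\mathbf{J}_{H_\delta}(r,\theta)-I=\begin{bmatrix}0 & 0 \\ \partial_r g_r(\theta) & g_r'(\theta)-1\end{bmatrix}$$
has only one nonzero row, it suffices to show both bottom-row entries are $O(\delta)$ uniformly. Differentiating (\ref{e:grtheta1})--(\ref{e:grtheta2}) in $\theta$ yields $g_r'(\theta)=(1\pm\delta)f(r,\theta)/f(r,g_r(\theta))$; differentiating in $r$ gives an analogous ratio-of-integrals expression for $\partial_r g_r(\theta)$. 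The size hypotheses $(h')^2=O((r+h)^3)$ and $h''h'=O((r+h)^3)$ from the parametrisation then force both quantities to be $O(\delta)$ with constants depending only on $h$. Since $\mathbf{J}_{\Psi_\delta}=JJ^{-1}=I$ at $\delta=0$, I decompose
$$\mathbf{J}_{\Psi_\delta}-I=\bigl[J(r,g_r(\theta))-J(r,\theta)\bigr]J(r,\theta)^{-1}+J(r,g_r(\theta))\bigl[\mathbf{J}_{H_\delta}(r,\theta)-I\bigr]J(r,\theta)^{-1},$$
and on the region $\{r\ge 1/10\}$, where $\det J$ is bounded below and $J$ is $C^1$ in $(r,\theta)$, the first summand is $O(\delta)$ because $g_r(\theta)-\theta=O(\delta)$, and the second is $O(\delta)$ by its rank-one structure together with the boundedness of $J$ and $J^{-1}$.

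The main obstacle is the region $\{r\le 1/10\}$ near the twists, where $\det J \asymp r$ degenerates and $J^{-1}$ blows up, so the abstract conjugation argument cannot be applied uniformly. I would handle this by direct computation: in this region $h\equiv 0$ by construction, so $f\equiv 1$ and (\ref{e:grtheta1})--(\ref{e:grtheta2}) become linear, giving $g_r(\theta)=\delta+(1+\delta)\theta$ for $\theta\le 0$ and $g_r(\theta)=\delta+(1-\delta)\theta$ for $\theta\ge 0$, both independent of $r$. The parametrisation $K$ itself reduces to polar coordinates centred at the twist $T^1=(1/2,0)$ with angle $\phi=\theta\pi/2$, so $\Psi_\delta$ is given explicitly by
$$(1/2+r\sin\phi,\,r\cos\phi)\longmapsto(1/2+r\sin\phi',\,r\cos\phi'),$$
where $\phi'-\phi=\delta(\pi/2+\phi)$ for $\phi\le 0$ and $\phi'-\phi=\delta(\pi/2-\phi)$ for $\phi\ge 0$; in particular $|\phi'-\phi|\le \delta\pi/2$. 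Differentiating this closed form in $(x_1,x_2)$ through the polar variables $(r,\phi)$ yields $\mathbf{J}_{\Psi_\delta}-I=O(\delta)$ entrywise by elementary trigonometry. Combining the bounds from the two regions and invoking the reflection symmetry for the upper half-square finishes the proof with $C_1$ an absolute constant.
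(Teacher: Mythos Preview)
Your proof is correct and follows essentially the same route as the paper's: the same chain-rule decomposition of $\mathbf{J}_{\Psi_\delta}-I$ into a $(\mathbf{J}_{H_\delta}-I)$-term and a $\bigl(J(r,g_r(\theta))-J(r,\theta)\bigr)$-term, the same $O(\delta)$ bounds on $g_r'-1$, $\partial_r g_r$ and $g_r(\theta)-\theta$ obtained by differentiating the defining integral equations (these are the paper's Lemmas~\ref{l:grthetasmoothnesstheta}--\ref{l:jrtheta}), and the same two-region split between $\{r\ge r_1\}$ where $J,J^{-1}$ are uniformly bounded and the twist region where $h\equiv 0$ and a direct computation suffices. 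The only visible discrepancy is that you write the chain rule in the standard left-acting order $J(r,g_r(\theta))\,\mathbf{J}_{H_\delta}\,J(r,\theta)^{-1}$ whereas the paper uses the transposed convention; since the $\ell_\infty$ matrix norm is transpose-invariant the estimates are identical, and your explicit polar-coordinate calculation near the twists is in fact more detailed than the paper's ``can be verified by direct computation.''
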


To prove this proposition, we shall need a few lemmas, dealing with the functions $g_r(\theta)$ and $J(r,\theta)$. These lemmas will be proved in \ref{s:appa}.

\begin{lemma}
\label{l:grthetasmoothnesstheta}
There exists an some absolute constant $C>0$ such that the following hold.

\begin{enumerate}
\item[(i)] $(g_r(\ell)-\ell)\leq C\delta(\ell+1)$ for all $\ell\leq 0$.
\item[(ii)] We have $$\sup_{0<r<\frac{1}{2},\theta\in [-1,0)\cup (0,1]} |\frac{\partial g_{r}(\theta)}{\partial \theta}-1|\leq C\delta.$$
\item[(iii)] $(g_r(\ell)-\ell)=O(\frac{h'(r)}{(r+h(r))^2})$ as $r\rightarrow \frac{1}{2}$ uniformly for all $\ell\leq 0$.
\end{enumerate}
\end{lemma}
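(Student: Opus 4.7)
The plan is to differentiate the implicit definitions (\ref{e:grtheta1}) and (\ref{e:grtheta2}) with respect to $\ell$, which yields the explicit formula $g_r'(\ell) = (1+\delta)\,\omega(r,\ell)/\omega(r, g_r(\ell))$ for $\ell \in [-1,0]$ (with $1-\delta$ replacing $1+\delta$ on $[0,1]$), where $\omega(r,\theta) := 1 + h'(r)\bigl(1 - \cos(\theta\Theta(r))\bigr)$ is the integrand. The shared starting point for all three parts is the uniform estimate $1 \leq \omega(r,\theta) \leq 1 + C$ on $(0,1/2)\times[-1,1]$. Using $1-\cos x \leq x^2/2$, this reduces to showing that $h'(r)\Theta(r)^2$ is uniformly bounded: for $r \leq r_0$ this is immediate because $\Theta(r) \leq \pi/2$ and $h'$ is bounded by a constant, and for $r > r_0$ the condition $(h'(r))^2 = O((r+h(r))^3)$ gives $h'(r) = O((r+h(r))^{3/2})$ while $\Theta(r) = \arcsin(1/(2(r+h(r)))) = O((r+h(r))^{-1})$, so the product is $O((r+h(r))^{-1/2}) = O(1)$.

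For part (i), rearrange the defining equation as $\int_{\ell}^{g_r(\ell)} \omega\,d\theta = \delta\int_{-1}^{\ell}\omega\,d\theta$. Since $\omega \geq 1$, the left-hand side is at least $g_r(\ell) - \ell$. For the right-hand side, the integral $\int_{-1}^{\ell}(1 - \cos(\theta\Theta(r)))\,d\theta \leq \Theta(r)^2(\ell+1)/2$ combined with the boundedness of $h'\Theta^2$ gives $\int_{-1}^{\ell}\omega\,d\theta \leq C(\ell+1)$, and the claim follows.

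For part (ii), write
\[
g_r'(\ell) - 1 = \frac{\delta\,\omega(r,\ell) + \bigl(\omega(r,\ell) - \omega(r, g_r(\ell))\bigr)}{\omega(r, g_r(\ell))}.
\]
The first numerator term is $O(\delta)$ by the uniform bounds on $\omega$. For the second, $\omega(r,\ell) - \omega(r, g_r(\ell)) = h'(r)\bigl(\cos(g_r(\ell)\Theta(r)) - \cos(\ell\Theta(r))\bigr)$; the mean value theorem gives $|\cos a - \cos b| \leq |\sin\xi|\,|a-b|$ for some $\xi$ between $a$ and $b$, and since $|\xi| \leq \Theta(r)$ we have $|\sin\xi| \leq \Theta(r)$. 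Thus $|\omega(r,\ell) - \omega(r,g_r(\ell))| \leq h'(r)\Theta(r)^2 |g_r(\ell) - \ell|$, which by part (i) and the boundedness of $h'\Theta^2$ is $O(\delta(\ell+1)) = O(\delta)$. The case $\ell \in [0,1]$ is symmetric via (\ref{e:grtheta2}).

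Part (iii) is a sharpening of (i) in the regime $r \to 1/2$, exploiting that now $|\omega(r,\theta) - 1| \leq C\,h'(r)\Theta(r)^2 = O(h'(r)/(r+h(r))^2)$, which tends to $0$. Writing $\omega = 1 + \eta$ with this bound on $\eta$ and substituting into the integral identity yields $(g_r(\ell)-\ell) + \int_{\ell}^{g_r(\ell)}\eta\,d\theta = \delta(\ell+1) + \delta\int_{-1}^{\ell}\eta\,d\theta$; both error integrals are controlled by $(\ell+1)$ times the bound on $\eta$ (using part (i) for the first), giving the claimed asymptotic with uniformity in $\ell$. The main obstacle here is tracking the interaction between the $r$-dependent quantities $h'(r)$, $\Theta(r)$, $h(r)$ carefully enough to identify the dominant error term and ensure the estimate is genuinely uniform over $\ell \leq 0$, despite the various quantities blowing up or vanishing at different rates as $r \to 1/2$.
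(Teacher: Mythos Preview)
For parts (i) and (ii) your argument is correct and is essentially the paper's: both of you rewrite the defining relation as $\int_{\ell}^{g_r(\ell)}\omega\,d\theta = \delta\int_{-1}^{\ell}\omega\,d\theta$ with $\omega(r,\theta)=1+h'(r)(1-\cos(\theta\Theta(r)))$, use that $\omega$ is bounded above and below uniformly (the paper phrases this via the mean value theorem for integrals, you bound $h'(r)\Theta(r)^2$ directly), and then differentiate to obtain the same formula for $\partial g_r/\partial\theta-1$.

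For part (iii) there is a real gap. Your identity
\[
(g_r(\ell)-\ell)+\int_{\ell}^{g_r(\ell)}\eta\,d\theta=\delta(\ell+1)+\delta\int_{-1}^{\ell}\eta\,d\theta
\]
is correct, and the two $\eta$-integrals are indeed $O\bigl(h'(r)/(r+h(r))^2\bigr)$. But this only gives
\[
g_r(\ell)-\ell=\delta(\ell+1)+O\!\left(\frac{h'(r)}{(r+h(r))^2}\right),
\]
and the leading term $\delta(\ell+1)$ does \emph{not} disappear as $r\to\frac12$, while $h'(r)/(r+h(r))^2=O((r+h(r))^{-1/2})\to 0$. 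So the literal assertion of (iii) is false and cannot be proved; your last step silently drops the $\delta(\ell+1)$ term. The paper's own proof does not establish (iii) as written either: what it actually records is equation (\ref{e:grthetadelta}), namely $g_r(\ell)-\ell=\delta(\ell+1)(1+o(1))$ as $r\to\frac12$. The statement of (iii) appears to be mis-stated in the paper --- the intended claim is almost certainly a bound on $g_r(\ell)-\ell-\delta(\ell+1)$, and that is exactly what your computation proves.
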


Proof of this lemma is provided in \ref{s:appa}.

\begin{lemma}
\label{l:grthetasmoothnessr}
There exists an absolute constant $C>0$ such that we have the following.

\begin{enumerate}
\item[(i)] We have
$$\sup_{0<r<\frac{1}{2}, r\neq r_0, \theta\in [-1,0)\cup (0,1]} |\frac{\partial g_{r}(\theta)}{\partial r}|\leq C\delta.$$
\item[(ii)] We have $|\frac{\partial g_{r}(\theta)}{\partial r}|\leq \delta O\left(\frac{h''(r)}{(r+h(r))^2}+\frac{h'(r)^3}{(r+h(r))^5}\right)$ as $r\rightarrow \frac{1}{2}$.
\end{enumerate}
\end{lemma}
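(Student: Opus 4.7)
The plan is to apply implicit differentiation to the defining integral equation for $g_r$ and then bound the resulting expression using the smoothness hypotheses on $h$ listed at the start of \S \ref{s:aux}. Throughout set
\[
F(r,\theta)=1+h'(r)-h'(r)\cos(\theta\Theta(r)) = 1 + h'(r)\bigl(1-\cos(\theta\Theta(r))\bigr),
\]
so that $F\geq 1$ and the defining equation \eqref{e:grtheta1} reads $(1+\delta)\int_{-1}^{\ell} F(r,\theta)\,d\theta=\int_{-1}^{g_r(\ell)}F(r,\theta)\,d\theta$. Differentiating in $r$ (valid away from $r=r_0$, where $\Theta$ fails to be $C^1$) and rearranging, I obtain
\[
\frac{\partial g_r(\ell)}{\partial r}
=\frac{1}{F(r,g_r(\ell))}\left[\delta\int_{-1}^{\ell}\partial_r F(r,\theta)\,d\theta-\int_{\ell}^{g_r(\ell)}\partial_r F(r,\theta)\,d\theta\right],
\]
with an identical formula coming from \eqref{e:grtheta2} when $\ell\geq 0$. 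A direct computation gives
\[
\partial_r F(r,\theta)=h''(r)\bigl(1-\cos(\theta\Theta(r))\bigr)+h'(r)\theta\Theta'(r)\sin(\theta\Theta(r)).
\]

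For part (i), I would argue as follows. On the range $r\leq 1/10$ the function $h$ vanishes, so $\partial_r F\equiv 0$ and the claim is trivial. On any compact subinterval of $(1/10,1/2)$ not containing $r_0$, the quantities $h',h'',\Theta,\Theta'$ are bounded, hence $|\partial_r F|$ is bounded, $F\geq 1$, and Lemma \ref{l:grthetasmoothnesstheta}(i) gives $|g_r(\ell)-\ell|=O(\delta)$, so both terms in the bracket are $O(\delta)$, yielding the claim. The nontrivial regime is $r$ close to $1/2$; here $h'(r),h''(r)$ may be large but $\Theta(r)\to 0$, and the oscillatory factors $1-\cos(\theta\Theta(r))=O((\theta\Theta(r))^2)$ and $\sin(\theta\Theta(r))=O(\theta\Theta(r))$ supply the needed smallness. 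Using $\Theta(r)=O(1/(r+h(r)))$ and $|\Theta'(r)|=O((1+h'(r))/(r+h(r))^2)$ from the definition of $\Theta$ for $r>r_0$, together with the hypothesized bounds $(h'(r))^2=O((r+h(r))^3)$ and $h''(r)h'(r)=O((r+h(r))^3)$, I would show that $|\partial_r F|$ integrates to $O(1)$ uniformly and hence $|\partial_r g_r(\ell)|\leq C\delta$.

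For part (ii), I would retain the leading-order asymptotics rather than simply taking suprema. Expanding $1-\cos(\theta\Theta(r))=\tfrac12(\theta\Theta(r))^2+O((\theta\Theta(r))^4)$ and $\sin(\theta\Theta(r))=\theta\Theta(r)+O((\theta\Theta(r))^3)$ and integrating term-by-term, the main contribution to $\int_{-1}^{\ell}\partial_r F\,d\theta$ as $r\to 1/2$ is a constant multiple of $h''(r)\Theta(r)^2+h'(r)\Theta(r)\Theta'(r)$. Substituting the asymptotic forms of $\Theta,\Theta'$ indicated above, these become $O(h''(r)/(r+h(r))^2)$ and $O(h'(r)^2(1+h'(r))/(r+h(r))^4)$ respectively; the second is dominated by $O(h'(r)^3/(r+h(r))^4)$ in view of $(h'(r))^2=O((r+h(r))^3)\Rightarrow h'(r)\to\infty$. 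Dividing by $F\geq 1$, using the sharper version $|g_r(\ell)-\ell|=O(h'(r)/(r+h(r))^2)$ from Lemma \ref{l:grthetasmoothnesstheta}(iii) to control the $\int_\ell^{g_r(\ell)}$ piece, and collecting terms delivers the claimed bound $\delta\cdot O(h''(r)/(r+h(r))^2+h'(r)^3/(r+h(r))^5)$ (the extra factor $(r+h(r))^{-1}$ in the second summand coming from applying $\Theta'$ rather than $\Theta$).

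The main obstacle is the second regime in part (i): the individual quantities $h'(r),h''(r),\Theta'(r)$ can all blow up as $r\to 1/2$, so the uniform bound is only preserved through a careful cancellation between the growth of these derivatives and the decay of $\Theta(r)$ and the $\cos/\sin$-expansion factors. Concretely, this cancellation is exactly what the three scaling conditions in item (4) of the definition of $h$ were engineered to provide, and the bookkeeping required to verify it cleanly — in particular to extract the precise asymptotics needed for (ii) — is the most delicate part of the argument.
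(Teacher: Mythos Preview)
Your approach is essentially the same as the paper's: both differentiate the defining relation for $g_r$ implicitly in $r$ and then bound the resulting expression using the asymptotics of $\Theta,\Theta'$ and the growth conditions on $h$. The paper works with the explicit antiderivative $H_r(\ell)=(1+h'(r))\ell-h'(r)\sin(\ell\Theta(r))/\Theta(r)$ and applies the mean value theorem to replace the two integrals by point evaluations times $(\theta+1)$ and $(g_r(\theta)-\theta)$ respectively, whereas you keep the integral form $\delta\int_{-1}^{\ell}\partial_r F-\int_{\ell}^{g_r(\ell)}\partial_r F$; these are equivalent manipulations.

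One small computational slip to flag for part~(ii): from $\Theta(r)=O((r+h(r))^{-1})$ and $|\Theta'(r)|=O((1+h'(r))(r+h(r))^{-2})$ you should get $h'(r)\Theta(r)\Theta'(r)=O\bigl(h'(r)(1+h'(r))/(r+h(r))^3\bigr)=O\bigl(h'(r)^2/(r+h(r))^3\bigr)$, not the $O(h'(r)^2(1+h'(r))/(r+h(r))^4)$ you wrote. This coarser rate does not by itself match the second summand $h'(r)^3/(r+h(r))^5$ in the stated bound, so to land exactly on the form in~(ii) you will need either to absorb this term into $h''(r)/(r+h(r))^2$ via the relations among the growth hypotheses, or (as the paper does, invoking the sharper asymptotic $(g_r(\ell)-\ell)=\delta(\ell+1)(1+o(1))$) to track the cancellation a bit more carefully. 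This does not affect part~(i), which is the part actually used downstream.
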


The proof is deferred to \ref{s:appa}.

\begin{lemma}
\label{l:jrtheta}
Let $\mathbf{M}(r,\theta)=J(r,g_r(\theta))-J(r,\theta)$. Then there exists an absolute constant $C$ such that $||\mathbf{M}(r,\theta)||\leq C\delta$.
\end{lemma}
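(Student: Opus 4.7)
The plan is to bound each entry of $\mathbf{M}(r,\theta)$ separately by applying the mean value theorem in the $\theta$-variable. For each entry $a_{ij}(r,\theta)$ of $J(r,\theta)$, one has
$$|a_{ij}(r,g_r(\theta)) - a_{ij}(r,\theta)| \leq |g_r(\theta) - \theta|\cdot\sup_{s\in[-1,1]}|\partial_\theta a_{ij}(r,s)|.$$
Since each $a_{ij}$ depends on $\theta$ only through $\sin(\theta\Theta(r))$, $\cos(\theta\Theta(r))$, $\theta\sin(\theta\Theta(r))$, and $\theta\cos(\theta\Theta(r))$, the derivative $\partial_\theta a_{ij}$ is a sum of products of the ``building blocks'' $(1+h'(r))\Theta(r)$, $(r+h(r))\Theta'(r)$, and $(r+h(r))\Theta(r)^2$ with trigonometric factors of modulus at most $1$. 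Bounding $||\mathbf{M}(r,\theta)||$ by $C\delta$ therefore reduces to controlling the product of $|g_r(\theta)-\theta|$ with each of these building blocks.

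I would split the analysis by the size of $r$. Choose a small absolute constant $\varepsilon_0>0$. For $r \leq 1/2 - \varepsilon_0$, the quantities $h(r),h'(r),\Theta(r),\Theta'(r)$ are all bounded by absolute constants (using the $C^3$ regularity of $h$ together with the explicit formulae for $\Theta$ on the two regions $\{r\leq r_0\}$ and $\{r>r_0\}$), hence $\sup_s|\partial_\theta a_{ij}(r,s)| \leq C$. Combined with the uniform estimate $|g_r(\theta)-\theta| \leq C|\delta|$ --- obtained by integrating Lemma~\ref{l:grthetasmoothnesstheta}(ii) using the boundary values $g_r(\pm 1)=\pm 1$, together with Lemma~\ref{l:grthetasmoothnesstheta}(i) applied at $\theta=0$ to control $|g_r(0)|$ --- this immediately gives $|a_{ij}(r,g_r(\theta))-a_{ij}(r,\theta)| \leq C\delta$ in this regime.

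The main obstacle is the regime $r \in (1/2-\varepsilon_0,1/2)$, where the building blocks can grow and the sharper form of Lemma~\ref{l:grthetasmoothnesstheta}(iii) is needed to compensate. Using $\Theta(r) = \arcsin(1/(2(h(r)+r)))$ for $r > r_0$ together with the hypothesis $(h'(r))^2 = O((r+h(r))^3)$, routine calculations give $\Theta(r) = O((r+h(r))^{-1})$, $(1+h'(r))\Theta(r) = O((r+h(r))^{1/2})$, $(r+h(r))\Theta'(r) = O((r+h(r))^{1/2})$, and $(r+h(r))\Theta(r)^2 = O((r+h(r))^{-1})$. Consequently $\sup_s|\partial_\theta a_{ij}(r,s)| = O((r+h(r))^{1/2})$, the worst term coming from the $(1,1)$ entry. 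On the other hand, Lemma~\ref{l:grthetasmoothnesstheta}(iii) refines the estimate on $g_r(\theta)-\theta$ to $\delta\cdot O(h'(r)/(r+h(r))^2) = O(\delta\cdot(r+h(r))^{-1/2})$, where the factor of $\delta$ is inherited from the $\delta$-scaling built into the definition of $g_r$ via (\ref{e:grtheta1})--(\ref{e:grtheta2}). Multiplying the two estimates, the growth in $(r+h(r))^{1/2}$ and the decay in $(r+h(r))^{-1/2}$ cancel precisely, yielding $|a_{ij}(r,g_r(\theta))-a_{ij}(r,\theta)| = O(\delta)$ in this regime as well.

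Combining the two regimes gives the entry-wise bound $|M_{ij}(r,\theta)| \leq C\delta$, which by definition of the $\ell_\infty$ matrix norm is exactly $||\mathbf{M}(r,\theta)|| \leq C\delta$. The upper half of the square is handled by the reflective definition of $\Psi_\delta$, so no new case arises.
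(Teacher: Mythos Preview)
Your mean-value-theorem strategy and the splitting into the two $r$-regimes are fine, and the case $r\le \tfrac12-\varepsilon_0$ is handled correctly. The gap is in the regime $r\to\tfrac12$.

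You claim, via part~(iii) of Lemma~\ref{l:grthetasmoothnesstheta}, that
\[
|g_r(\theta)-\theta|=\delta\cdot O\!\left(\frac{h'(r)}{(r+h(r))^2}\right)=O\!\bigl(\delta\,(r+h(r))^{-1/2}\bigr),
\]
and then use this decay to cancel the growth $O((r+h(r))^{1/2})$ in your building-block bound for $\partial_\theta a_{ij}$. But this decay is not there: by~(\ref{e:grthetadelta}) in the proof of that lemma, $g_r(\ell)-\ell\to\delta(\ell+1)$ as $r\to\tfrac12$, so $|g_r(\theta)-\theta|$ is of order $\delta$ and does \emph{not} tend to $0$ with $r$. (Part~(iii) as written does not carry the $\delta$ factor you insert, and in any case cannot be read as giving $o(1)$ decay of $g_r(\theta)-\theta$.) With only $|g_r(\theta)-\theta|\le C\delta$ available, your product $O((r+h)^{1/2})\cdot C\delta$ is unbounded, and the argument breaks.

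The paper's proof uses only part~(i) together with the formula for $J$. What makes this work is a cancellation you did not exploit: in $\partial_\theta a_{11}$ (and $\partial_\theta a_{21}$) the two large building blocks occur with the \emph{same} trigonometric factor, namely
\[
\partial_\theta a_{11}=\bigl[(1+h')\Theta+(r+h)\Theta'\bigr]\cos(\theta\Theta)-(r+h)\Theta'\,\theta\Theta\sin(\theta\Theta).
\]
For $r>r_0$ one has $\sin\Theta=\tfrac{1}{2(r+h)}$, hence $(r+h)\Theta'=-(1+h')\tan\Theta$, and therefore
\[
(1+h')\Theta+(r+h)\Theta'=(1+h')\bigl(\Theta-\tan\Theta\bigr)=O\!\bigl((1+h')\Theta^3\bigr)=O\!\bigl((r+h)^{-3/2}\bigr).
\]
The remaining term $(r+h)\Theta'\cdot\theta\Theta\sin(\theta\Theta)$ is also $O((r+h)^{-3/2})$. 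Thus $\partial_\theta a_{ij}=O(1)$ uniformly as $r\to\tfrac12$ (the off-diagonal entries are even easier, since $(r+h)\Theta^2=O((r+h)^{-1})$). With this uniform bound, part~(i) alone ($|g_r(\theta)-\theta|\le C\delta$) gives $\|\mathbf{M}(r,\theta)\|\le C\delta$ directly. Your argument is repaired by replacing the appeal to part~(iii) with this cancellation.
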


\begin{proof}
This follows immediately from the formula for $J(r,\theta)$ and part $(i)$ of Lemma  \ref{l:grthetasmoothnesstheta}.
\end{proof}

Now we are ready to prove Proposition \ref{p:psideltaderivative}.

\begin{proof}[Proof of Proposition \ref{p:psideltaderivative}]
By symmetry it is enough to consider $x=(x_1,x_2)$ such that $x_2\leq \frac{1}{2}$.
To start with, we assume $x=(x_1,x_2)$ with $x_2<\frac{1}{2}$. The differentiability is easy to establish. Let $(r,\theta)=K(x_1,x_2)$. Let $\mathbf{J}_{H}(r,\theta)$ denote the Jacobian matrix of the transformation $H$ evaluated at the point $(r,\theta)$. By Chain rule, it follows that

$$\mathbf{J}_{\Psi_{\delta}}(x)= J(r,\theta)^{-1}\mathbf{J}_{H}(r,\theta)J(r,g_r(\theta)).$$

It follows that
\begin{equation}
\label{e:jpsix}
\mathbf{J}_{\Psi_{\delta}}(x)-{\rm{\mathbf I}}= J(r,\theta)^{-1}(\mathbf{J}_{H}(r,\theta)-{\rm{\mathbf I}})J(r,g_r(\theta))+J(r,\theta)^{-1}\mathbf{M}(r,\theta).
\end{equation}

It follows from the definition of $H$ that
\[
\mathbf{J}_{H}(r,\theta)=\left[
\begin{array}{cc}
1& 0\\
\frac{\partial g_{r}(\theta)}{\partial r}& \frac{\partial g_{r}(\theta)}{\partial \theta}
\end{array}
\right].
\]

It follows from Lemma \ref{l:grthetasmoothnesstheta} and Lemma \ref{l:grthetasmoothnessr} that there exists an absolute constant $C$ that $$||\mathbf{J}_{H}(r,\theta)-{\rm{\mathbf I}}||\leq C\delta.$$

Observe the following. Fix $r_0>\frac{1}{10}>r_1>0$. Observe that there exists a constant $C$ such that

\begin{equation}
\label{e:jrtheta2}
\sup_{r_1<r<\frac{1}{2},\theta} ||J(r,\theta)||\vee ||J(r,\theta)^{-1}||\leq C.
\end{equation}

Hence it follows from Lemma \ref{l:jrtheta} and (\ref{e:jpsix}) that there exists and absolute constant $C$ such that for all $x\in K^{-1}(\{r_1<r<\frac{1}{2}\})$ we have

$$||\mathbf{J}_{\Psi_{\delta}}(x)-\rm{\mathbf{I}}||\leq C\delta.$$

Since $h(r)=0$ for each $r\leq r_1$ it follows that for each $r<r_1$, and $\theta\leq 0$, we have $g_r(\theta)=-1+(1+\delta)(\theta+1)$ and it can be verified by direct computation that there exists $C>0$ such that

%
%
%
%
%
%

$$\sup_{0<r\leq r_1} ||J(r,\theta)^{-1}(\mathbf{J}_{H}(r,\theta)-{\rm{\mathbf I}})J(r,g_r(\theta))+J(r,\theta)^{-1}\mathbf{M}(r,\theta)||\leq C\delta.$$

It is also easy to see using symmetry of construction and Lemma \ref{l:grthetasmoothnessr} that $\mathbf{J}_{\Psi_{\delta}}$ is continuous on $\{x_2=\frac{1}{2}\}\setminus \{(1/2,1/2)\}$.

%
%
%
%

This completes the proof of the proposition by choosing $C_1$ appropriately.
\end{proof}

Finally we have the following.

\begin{proposition}
\label{p:lipschitz}
Let $1>\chi>0$ be fixed. Then for all $\delta<\delta_0=\frac{\chi}{100(C_1+1)}$, $\Psi_{\delta}$ is bi-Lipschitz with Lipschitz constant $(1+\chi)$.
\end{proposition}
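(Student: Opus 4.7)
The plan is to upgrade the pointwise Jacobian bound in Proposition \ref{p:psideltaderivative} to a global bi-Lipschitz estimate by a standard path-integration argument, using the continuity of $\Psi_\delta$ (Lemma \ref{l:psideltacont}) to absorb the one-dimensional exceptional set $N=\mathfrak{C}\cup S$.

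First I would convert the entrywise bound $||\mathbf{J}_{\Psi_\delta}(x)-\mathbf{I}||\leq C_1\delta$ into a Euclidean operator-norm bound: for a $2\times 2$ matrix $\mathbf{M}$ one has $|\mathbf{M}v|\leq ||\mathbf{M}||_F|v|\leq 2||\mathbf{M}|||v|$ for every $v\in\R^2$, so $|(\mathbf{J}_{\Psi_\delta}(x)-\mathbf{I})v|\leq 2C_1\delta|v|$ on $[0,1]^2\setminus N$. Given $x,y\in[0,1]^2$, pick $x',y'$ arbitrarily close to $x,y$ so that the segment $[x',y']$ meets $N$ in only finitely many parameter values; this is possible because $\mathfrak{C}$ is a straight line and each seam is a real-analytic curve, so any line either meets it discretely or coincides with it. On each open sub-segment $\Psi_\delta$ is $C^1$, so the fundamental theorem of calculus combined with continuity of $\Psi_\delta$ across the finitely many crossings gives
\[
\Psi_\delta(y')-\Psi_\delta(x')-(y'-x')=\int_0^1\bigl(\mathbf{J}_{\Psi_\delta}(x'+t(y'-x'))-\mathbf{I}\bigr)(y'-x')\,dt,
\]
the integrand being defined piecewise off a finite set of $t$-values. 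Taking Euclidean norms yields $|\Psi_\delta(y')-\Psi_\delta(x')-(y'-x')|\leq 2C_1\delta|y'-x'|$, and therefore
\[
(1-2C_1\delta)|y'-x'|\leq|\Psi_\delta(y')-\Psi_\delta(x')|\leq(1+2C_1\delta)|y'-x'|.
\]
Letting $x'\to x$ and $y'\to y$ and invoking continuity of $\Psi_\delta$ extends both inequalities to $x,y$. With $\delta<\delta_0=\chi/(100(C_1+1))$ we have $2C_1\delta<\chi/50$, so $1+2C_1\delta\leq 1+\chi$ and $(1-2C_1\delta)^{-1}\leq 1+\chi$ (using $\chi<1$), giving the bi-Lipschitz constant $1+\chi$.

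The main obstacle I expect is the transversality step: showing that the endpoints can indeed be perturbed so the resulting segment meets $N$ in a finite set. For the crack this is immediate, since a segment hits a straight line at most once unless it lies inside it. For the seams, real-analyticity of the parametrization $K$ makes each seam a real-analytic curve, so its intersection with any line is either the entire curve or discrete, and the former is excluded by a small perturbation of $(x',y')$ in a generic direction. An alternative route, avoiding transversality altogether, is to invoke the absolute-continuity-on-lines characterization of Lipschitz maps: $\Psi_\delta$ is continuous on $[0,1]^2$ and $C^1$ off a set of finite one-dimensional Hausdorff measure, so by a Fubini argument almost every axis-parallel line meets $N$ finitely often and $\Psi_\delta$ is ACL there; the entrywise Jacobian bound then directly yields the claimed Lipschitz constant.
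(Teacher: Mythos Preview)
Your argument is correct and is essentially the approach the paper intends: bound $\|\mathbf{J}_{\Psi_\delta}-\mathbf{I}\|$ via Proposition~\ref{p:psideltaderivative} and deduce the bi-Lipschitz constant. The paper's proof is a two-line sketch that simply asserts the implication (also recording $\|\mathbf{J}_{\Psi_\delta}^{-1}-\mathbf{I}\|\leq\chi/2$, which you do not need since your path-integration inequality already gives both directions); your version supplies the details the paper omits, in particular the handling of the exceptional set $\mathfrak{C}\cup S$, where in fact each seam is an arc of a circle so any segment meets it in at most two points and the transversality step is immediate.
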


\begin{proof}
For $\delta< \frac{\chi}{100C_1}\wedge \frac{1}{100}$, it follows from Proposition \ref{p:psideltaderivative} that

$$\max\{||\mathbf{J}_{\Psi_{\delta}}-\rm{\mathbf{I}}||, ||\mathbf{J}_{\Psi_{\delta}}^{-1}-\rm{\mathbf{I}}||\}\leq \frac{\chi}{2}.$$

The proposition follows.
\end{proof}

\begin{proposition}
\label{p:distance}
For $x\in [0,1]^2$, let $\tilde{d}(x)$ denotes its distance from the corners of $[0,1]^2$. Then there exists an absolute constant $C>0$ such that for all $x\in [0,1]^2$ we have

$$|\Psi_{\delta}(x)-x|\leq C\delta\tilde{d}(x).$$
\end{proposition}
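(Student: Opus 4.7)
The plan is to apply the fundamental theorem of calculus along the straight-line segment from $x$ to a nearest corner of $[0,1]^2$, using the Jacobian bound from Proposition~\ref{p:psideltaderivative}. Since the corners lie on $\partial[0,1]^2$, Lemma~\ref{l:psideltabasic} fixes each of them under $\Psi_\delta$. Choose $c$ to be a corner minimising $|x-c|$, so $|x-c|=\tilde d(x)$, and set $\gamma(t)=c+t(x-c)$ for $t\in[0,1]$. The goal is to obtain the representation
$$\Psi_\delta(x)-x=\int_0^1\bigl(\mathbf J_{\Psi_\delta}(\gamma(t))-\mathbf I\bigr)(x-c)\,dt$$
and then take norms.

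The first step is to verify that $\gamma$ meets the singular locus $\mathfrak C\cup S$ in only finitely many parameters $t$. The crack $\mathfrak C=\{x_1=\tfrac12\}$ is a straight line through the interior, and since no corner of $[0,1]^2$ has $x_1=\tfrac12$, the segment $\gamma$ is not contained in $\mathfrak C$ and meets it in at most one point. For the seams, a direct calculation from the parametrisation $K$ shows that at $r=r_0$ the defining relation becomes $(x_1-\tfrac12)^2+(x_2+h(r_0))^2=(r_0+h(r_0))^2$, so $S^1$ is an arc of a circle centred at $(\tfrac12,-h(r_0))$ with radius $r_0+h(r_0)$ (and $S^2$ is its reflection). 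A straight line intersects a circle in at most two points, so $\gamma$ meets each seam in at most two points. Enumerate the finitely many crossing parameters together with $0$ and $1$ as $0=t_0<t_1<\cdots<t_N=1$.

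The second step is to apply FTC piecewise. On each open subinterval $(t_{i-1},t_i)$, the image $\gamma((t_{i-1},t_i))$ lies in $[0,1]^2\setminus(\mathfrak C\cup S)$, where Proposition~\ref{p:psideltaderivative} asserts that $\Psi_\delta$ is $C^1$ with $\|\mathbf J_{\Psi_\delta}(\gamma(t))-\mathbf I\|\le C_1|\delta|$. The usual $C^1$ FTC gives
$$\Psi_\delta(\gamma(t_i))-\Psi_\delta(\gamma(t_{i-1}))=\int_{t_{i-1}}^{t_i}\mathbf J_{\Psi_\delta}(\gamma(t))(x-c)\,dt,$$
where continuity of $\Psi_\delta$ at the endpoints (Lemma~\ref{l:psideltacont}) is used to pass to closed intervals. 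Summing over $i$ and using $\Psi_\delta(c)=c$ yields the integral representation above. Subtracting $x-c=\int_0^1\mathbf I(x-c)\,dt$ and taking norms, while comparing the $\ell_\infty$ matrix norm with the Euclidean vector norm, gives
$$|\Psi_\delta(x)-x|\le 2C_1|\delta|\,|x-c|=2C_1|\delta|\,\tilde d(x),$$
which is the required bound with $C=2C_1$ (absorbing signs into the constant $C$).

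The main obstacle is the geometric step of ruling out that $\gamma$ overlaps the singular set $\mathfrak C\cup S$ on a set of positive length in $t$: a positive-measure overlap would destroy the piecewise-FTC argument since $\mathbf J_{\Psi_\delta}$ is not even defined on $\mathfrak C\cup S$. This is resolved by the structural observation that the seams are circular arcs, hence meet any straight line in at most two points, together with the fact that the corners are not on $\mathfrak C$. Once this geometric fact is in place, the remainder of the proof is routine, needing only continuity of $\Psi_\delta$ across the piecewise smooth decomposition to stitch together the FTC increments.
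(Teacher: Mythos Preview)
Your proof is correct, and it takes a genuinely different route from the paper's. The paper's one-line proof works directly in the $(r,\theta)$ parametrisation: since $\Psi_\delta$ only moves the $\theta$-coordinate, one has
\[
\Psi_\delta(x)-x=K^{-1}(r,g_r(\theta))-K^{-1}(r,\theta),
\]
and from the explicit formulas for $K^{-1}$ this difference has Euclidean norm at most $(r+h(r))\Theta(r)\,|g_r(\theta)-\theta|$. Part~(i) of Lemma~\ref{l:grthetasmoothnesstheta} bounds $|g_r(\theta)-\theta|$ by $C\delta(1-|\theta|)$, and one then checks that $(r+h(r))\Theta(r)(1-|\theta|)$ is dominated by a constant times $\tilde d(x)$. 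So the paper's argument stays entirely inside the explicit construction and uses only the elementary estimate on $g_r$.

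Your argument instead black-boxes the construction and invokes the stronger Proposition~\ref{p:psideltaderivative}, integrating the Jacobian bound along a segment to the nearest corner. This is cleaner and more conceptual---it would work for any map fixing the boundary with a uniform $O(\delta)$ Jacobian deviation from the identity off a null set---but it does rely on the full strength of Proposition~\ref{p:psideltaderivative}, whose proof already consumes Lemma~\ref{l:grthetasmoothnesstheta} and more. Your geometric observation that each level set $\{r=\text{const}\}$, and in particular the seam, is a circular arc (centred at $(\tfrac12,-h(r))$ with radius $r+h(r)$) is exactly what is needed to make the piecewise FTC rigorous; note also that the seams actually terminate at the corners, so the corner $c$ itself may lie in $\overline{S}$, but your finiteness argument is unaffected since a line still meets each circle in at most two points. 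Both approaches are valid; yours trades explicit coordinate computations for a dependence on a heavier prior result.
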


\begin{proof}
This follows easily from the definition of $\Psi_{\delta}$ and part $(i)$ of Lemma \ref{l:grthetasmoothnesstheta}.
\end{proof}

\subsubsection{Estimates for $\Psi_{\delta}''$}

We want to show that the second derivative of $\Psi_{\delta}$ remains bounded away from the \emph{Twists}.
\begin{proposition}
\label{p:psidelta2ndderivative}
$\Psi_{\delta}$ is twice differentiable at all points in $[0,1]^2$ except possibly on the \emph{crack} $\mathfrak{C}$ and the \emph{seams} $S$. Then there exists an absolute constant $C_2>0$ (not depending on $\delta, x$, possibly depending on $h$) such that for $(x_1,x_2)\in [0,1]^2\setminus (\mathfrak{C}\cup S \cup T_{r_1/2})$, we have
$$||\Psi_{\delta}''(x_1,x_2)||\leq C_2\delta.$$
\end{proposition}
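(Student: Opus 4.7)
The plan is to compute $\Psi_\delta''$ via the chain rule from the factorization $\Psi_\delta = K^{-1}\circ H_\delta\circ K$ (valid on the lower half) and to verify that every resulting term is $O(\delta)$. As in Proposition \ref{p:psideltaderivative}, the symmetry of the construction across $\{x_2 = 1/2\}$ reduces the proof to $x_2 < 1/2$, where $K$ and $K^{-1}$ are $C^2$ off the crack $\mathfrak{C}$ and the seam $S = \{r = r_0\}$ thanks to the third-order regularity of $h$ built into condition (4). Twice differentiability of $\Psi_\delta$ then reduces to twice differentiability of $H_\delta$, i.e., to showing that $g_r(\theta)$ is $C^2$ in $(r,\theta)$ on the relevant open set. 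This I would obtain by differentiating the defining identities (\ref{e:grtheta1}) and (\ref{e:grtheta2}) twice and applying the implicit function theorem: the integrand $1 + h'(r) - h'(r)\cos(\theta\Theta(r))$ is strictly positive, is $C^2$ in $(r,\theta)$ off $\{r = r_0\}$, and $\delta \in (-1,1)$ is fixed, so the same reasoning that yielded the first-derivative bounds in Lemmas \ref{l:grthetasmoothnesstheta} and \ref{l:grthetasmoothnessr} gives $\|\partial^2_\theta g_r\|, \|\partial_r \partial_\theta g_r\|, \|\partial_r^2 g_r\| = O(\delta)$ uniformly for $r \ge r_1/2$.

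For the quantitative bound, I would start from the decomposition (implicit in the proof of Proposition \ref{p:psideltaderivative})
\[
\mathbf{J}_{\Psi_\delta}(x) - \mathbf{I} \;=\; J(r,\theta)^{-1}\bigl(\mathbf{J}_H(r,\theta) - \mathbf{I}\bigr) J(r,\theta) \;+\; J(r,\theta)^{-1}\mathbf{J}_H(r,\theta)\,\mathbf{M}(r,\theta),
\]
obtained by splitting $J(r, g_r(\theta)) = J(r,\theta) + \mathbf{M}(r,\theta)$. Each summand contains an $O(\delta)$ factor ($\mathbf{J}_H - \mathbf{I}$ by Lemmas \ref{l:grthetasmoothnesstheta}--\ref{l:grthetasmoothnessr}, or $\mathbf{M}$ by Lemma \ref{l:jrtheta}). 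Writing $\partial_{x_j} = [J(r,\theta)^{-1}]_{1j}\,\partial_r + [J(r,\theta)^{-1}]_{2j}\,\partial_\theta$ and applying the product rule, every term that appears in $\partial_x \mathbf{J}_{\Psi_\delta}$ is of one of two types: either (a) a derivative of $J$, $J^{-1}$, or $\mathbf{J}_H$ multiplied by the already-$O(\delta)$ factor, or (b) a derivative $\partial_r, \partial_\theta$ landing on $\mathbf{J}_H - \mathbf{I}$ or on $\mathbf{M}$. Type (b) is $O(\delta)$ by the second-derivative bounds on $g_r$ established in the first paragraph, combined with the identity $\partial_* \mathbf{M}(r,\theta) = \partial_r J(r, g_r(\theta)) \cdot \partial_* r + \partial_\theta J(r, g_r(\theta)) \cdot \partial_* g_r(\theta) - \partial_* J(r,\theta)$, which is $O(\delta)$ once we use $\partial_\theta g_r - 1, \partial_r g_r = O(\delta)$.

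It remains to check that the non-$O(\delta)$ factors — namely $J$, $J^{-1}$, and their first $(r,\theta)$-derivatives — are uniformly bounded on $[0,1]^2 \setminus (\mathfrak{C}\cup S \cup T_{r_1/2})$. The bounds on $\|J\|$ and $\|J^{-1}\|$ are given by (\ref{e:jrtheta2}). Their derivatives, computed directly from the formula for $J(r,\theta)$, involve $h''(r), h'(r)^2/(r+h(r)), \Theta'(r), \Theta''(r)$ and so on; here the growth conditions in item (4) on $h$, specifically $(h'(r))^2 = O((r+h(r))^3)$ and $h''(r)h'(r) = O((r+h(r))^3)$, coupled with the explicit formula for $\Theta$, deliver uniform bounds for $r \ge r_1/2$, and $\partial J^{-1} = -J^{-1}(\partial J) J^{-1}$ transfers these to $J^{-1}$. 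Putting the pieces together yields $\|\Psi_\delta''\| \le C_2 \delta$ on the stated region. The main obstacle is the bookkeeping of the last step: one must track carefully that the potentially large factors of $h$-derivatives as $r \to 1/2$ are balanced by the $(r+h(r))^{-k}$ factors coming from $J^{-1}$ and $\Theta$, and this is precisely what motivated the specific growth conditions imposed on $h$ in the parametrization.
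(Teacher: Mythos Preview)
Your proposal is correct and follows essentially the same route as the paper: differentiate the expression for $\mathbf{J}_{\Psi_\delta}-\mathbf{I}$ via the product rule in the $(r,\theta)$ variables, invoke $O(\delta)$ bounds on the second derivatives of $g_r$ (the content of Lemma~\ref{l:2ndd2}), and verify that $J$, $J^{-1}$ and their first $(r,\theta)$-derivatives stay uniformly bounded away from the blown-up twists (Lemmas~\ref{l:2ndd1} and~\ref{l:2ndd3}). The only cosmetic difference is that you split $J(r,g_r(\theta))=J(r,\theta)+\mathbf{M}$ before isolating the $O(\delta)$ factors, whereas the paper differentiates $\mathbf{H}(r,\theta)=J^{-1}(\mathbf{J}_H-\mathbf{I})J(r,g_r(\theta))$ directly; note that your claim $\partial_*\mathbf{M}=O(\delta)$ also quietly uses $|g_r(\theta)-\theta|=O(\delta)$ together with a bound on the second $(r,\theta)$-derivatives of $J$, which is available from the same growth conditions on $h$.
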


\begin{proof}
Without loss of generality consider $x=(x_1,x_2)$ with $x_1<\frac{1}{2}$ and $x_2<\frac{1}{2}$. Let $(r,\theta)=K(x_1,x_2)$. Let us denote

$$\mathbf{H}(r,\theta)= J(r,\theta)^{-1}(\mathbf{J}_{H}(r,\theta)-{\rm \mathbf{I}})J(r,g_r(\theta)).$$

For the rest of this subsection, let us introduce the following piece of notation. For a matrix $\mathbf{A}$, $\mathbf{A}_{r}$ (resp. $\mathbf{A}_{\theta}$) shall denote the entrywise derivative w.r.t. $r$ (resp. $\theta$) of the matrix $A$. Because of (\ref{e:jrtheta2}), it suffices to prove that for some absolute constant $C$ we have

$$||\mathbf{H}_r(r,\theta)||\leq C\delta, ||\mathbf{H}_{\theta}(r,\theta)||\leq C\delta.$$

It follows now from Lemma \ref{l:jrtheta} and Propostion \ref{p:psideltaderivative} that it suffices to prove the following.

\begin{enumerate}
\item[(i)] Let us denote $\tilde{\mathbf{J}}(r,\theta)=J(r,\theta)^{-1}$. There exists an absolute constant $C>0$ such that $||\tilde{\mathbf{J}}_{r}(r,\theta)||\leq C,||\tilde{\mathbf{J}}_{\theta}(r,\theta)||\leq C$.
\item[(ii)] Let $\mathbf{J}^{0}(r,\theta)=(\mathbf{J}_H(r,\theta)-{\rm{\mathbf{I}}})$. Then there exists an absolute constant $C$ such that  $||\mathbf{J}^{0}_r(r,\theta)||\leq C\delta, ||\mathbf{J}^{0}_{\theta}(r,\theta)||\leq C\delta$.
\item[(iii)] There exists a constant $C>0$ such that $||J_{r}(r,g_{r}(\theta))||\leq C, ||J_{\theta}(r,\theta)||\leq C\delta$.
\end{enumerate}

The above three assertions are proved below in Lemma \ref{l:2ndd1}, Lemma \ref{l:2ndd2} and Lemma \ref{l:2ndd3} respectively. This completes the proof of the proposition.
\end{proof}

\begin{lemma}
\label{l:2ndd1}
Let $\tilde{\mathbf{J}}_{r}(r,\theta)$ be defined as in the proof of Proposition \ref{p:psidelta2ndderivative}. Then there exists an absolute constant $C$ such that for $r\in (r_1,\frac{1}{2})$ and $\theta\leq 0$, we have $||\tilde{\mathbf{J}}_{r}(r,\theta)||\leq C,||\tilde{\mathbf{J}}_{\theta}(r,\theta)||\leq C$.
\end{lemma}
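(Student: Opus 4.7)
The plan is to reduce the claim to bounds on the partial derivatives of $J$ itself, and then verify those bounds by splitting into regimes. Since $\tilde{\mathbf{J}} = J^{-1}$, the matrix identity $\tilde{\mathbf{J}}_r = -J^{-1}\, J_r\, J^{-1}$ (and similarly in $\theta$), combined with the uniform bound $\|J^{-1}\| \leq C$ from (\ref{e:jrtheta2}), reduces the problem to establishing an absolute constant with $\|J_r(r,\theta)\| + \|J_\theta(r,\theta)\| \leq C$ on the region $\{r_1 < r < 1/2,\, r \neq r_0,\, \theta \leq 0\}$.

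The $\theta$-derivative is handled by the same argument that underlies (\ref{e:jrtheta2}): differentiating the entries of $J$ in $\theta$ produces linear combinations of the factors already appearing in $J$, multiplied by $\Theta(r)$ or $\Theta(r)^2$ and a trigonometric function of $\theta\Theta(r)$, each of which is bounded on our region, so $\|J_\theta\| \leq C$. For $\|J_r\|$, on the sub-range $r_1 \leq r \leq 1/10$ we have $h \equiv 0$ and $\Theta(r) \equiv \pi/2$, so the entries of $J$ are either constant or linear in $r$ with bounded angular coefficient; $J_r$ is trivially bounded. On any compact interval inside $(1/10, r_0) \cup (r_0, 1/2-\varepsilon)$, the closed-form expressions for $J_r$ are continuous on a compact set, hence bounded.

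The main technical step is the range $r \to 1/2$. On $r_0 < r < 1/2$ one has $\Theta(r) = \arcsin\bigl(1/(2(r+h(r)))\bigr)$, yielding closed forms for $\Theta'(r)$ and $\Theta''(r)$ by direct differentiation. Writing out $\partial_r J_{ij}$ produces, in each entry, a sum of four types of terms proportional to $h''(r)$, $(1+h'(r))\Theta'(r)$, $(r+h(r))\Theta''(r)$, and $(r+h(r))\Theta'(r)^2$. Individually each of these diverges as $r \to 1/2$, but after substituting the closed forms for $\Theta, \Theta', \Theta''$ and Taylor-expanding the trigonometric factors in the small parameter $\theta\Theta(r)$, their leading-order contributions cancel identically. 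The surviving residuals in each entry are bounded by combinations of $(h'(r))^2/(r+h(r))^3$, $h'(r)h''(r)/(r+h(r))^3$, and $h^{(3)}(r)/(r+h(r))^2$, each of which is $O(1)$ by the growth hypotheses (4) on $h$.

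The hard part is the bookkeeping in this last step: individual terms in $\partial_r J$ diverge as $r \to 1/2$, and boundedness rests on algebraic cancellations that become apparent only after expanding the exact expressions for $\Theta, \Theta', \Theta''$ in powers of $1/(r+h(r))$. The growth conditions imposed on $h$ in the parametrisation are precisely calibrated so that, after these leading cancellations, what remains is controlled by the stated hypotheses.
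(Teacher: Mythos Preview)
Your approach is correct and essentially matches the paper's. The paper reduces via the adjugate/determinant formula for $J^{-1}$ (bounding $\partial_r D$, $\partial_\theta D$ and $J_r$, $J_\theta$ separately), whereas you use the cleaner identity $\partial(J^{-1}) = -J^{-1}(\partial J)J^{-1}$; both land on needing $\|J_r\|,\|J_\theta\|\leq C$. You are in fact more explicit than the paper on the key point: the paper simply asserts that the bound ``follows directly by differentiating'' using the growth hypotheses on $h$, while you correctly flag that individual terms in $\partial_r J$ diverge as $r\to\tfrac12$ and that one must exhibit leading-order cancellations via the exact expansions of $\Theta,\Theta',\Theta''$ (for instance, in $\partial_r J_{11}$ the contributions $h''\theta\Theta$ and $[2(1+h')\Theta'+(r+h)\Theta'']\theta$ cancel to leading order). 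Two minor imprecisions worth noting: not all four of your term-types diverge --- e.g.\ $(r+h)(\Theta')^2=O\bigl((h')^2/(r+h)^3\bigr)=O(1)$ already, and for the entry $J_{21}$ no cancellation is needed at all --- and $h^{(3)}$ does not enter the computation of $J_r$; the third-derivative hypothesis on $h$ is only invoked later for the second-derivative estimate on $\Psi_\delta$. Neither point affects the validity of your argument.
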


\begin{lemma}
\label{l:2ndd2}
Let $\mathbf{J}^0$ be defined as in the proof of Proposition \ref{p:psidelta2ndderivative}. Then there exists an absolute constant $C$ such that  $||\mathbf{J}^{0}_r(r,\theta)||\leq C\delta, ||\mathbf{J}^{0}_{\theta}(r,\theta)||\leq C\delta$.
\end{lemma}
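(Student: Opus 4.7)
The plan is to obtain closed-form expressions for the three non-trivial entries of $\mathbf{J}^0_r$ and $\mathbf{J}^0_\theta$---namely $\partial_\theta^2 g_r$, $\partial_r\partial_\theta g_r$, and $\partial_r^2 g_r$---by differentiating the defining relations (\ref{e:grtheta1}) and (\ref{e:grtheta2}) a second time, and then to bound each of them via the growth hypotheses on $h$ together with the first-derivative bounds already established in Lemmas \ref{l:grthetasmoothnesstheta}--\ref{l:grthetasmoothnessr}. Abbreviating $F(r,\theta)=1+h'(r)-h'(r)\cos(\theta\Theta(r))$, differentiation of (\ref{e:grtheta1}) in the argument of $g_r$ gives the pointwise identity
\[
F(r, g_r(\theta))\cdot \partial_\theta g_r(\theta) \;=\; (1+\delta)\,F(r,\theta),
\]
while differentiation in $r$ gives $F(r,g_r(\theta))\cdot \partial_r g_r(\theta) = (1+\delta)\int_{-1}^{\theta} F_r(r,t)\,dt - \int_{-1}^{g_r(\theta)} F_r(r,t)\,dt$, with analogous identities for $\theta\in[0,1]$.

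Next I would differentiate each of these relations once more---in $\theta$ for $\partial_\theta^2 g_r$ and $\partial_\theta\partial_r g_r$, and in $r$ for $\partial_r^2 g_r$. The resulting expressions split into three types of terms: (a) terms carrying an explicit factor of $\delta$ from $(1+\delta)-1$; (b) differences of the form $F_{\bullet}(r,g_r(\theta))-F_{\bullet}(r,\theta)$, which by the mean value theorem and Lemma \ref{l:grthetasmoothnesstheta}(i) are $O(\delta)$; and (c) terms containing the factor $\partial_r g_r$, which is itself $O(\delta)$ by Lemma \ref{l:grthetasmoothnessr}(i). The structural point making this work is that at $\delta=0$ one has $g_r(\theta)\equiv\theta$ and the right-hand sides vanish identically, so the general expressions telescope into a sum of the three types above and inherit a factor of $\delta$.

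What remains is to verify that the ambient coefficients multiplying these $\delta$'s stay uniformly bounded over $r\in[r_1/2,1/2)$ and $\theta\in[-1,1]$, which is the main obstacle. The coefficients are polynomials in $F_\theta, F_{\theta\theta}, F_r, F_{rr}, F_{r\theta}$ and $1/F$, hence combinations of $h',h'',h^{(3)}$ with $\Theta,\Theta',\Theta''$. A direct computation from the formula defining $\Theta$ shows $\Theta(r)=O(1/(r+h(r)))$ with corresponding decay $\Theta'(r)=O(1/(r+h(r))^2)$ and $\Theta''(r)=O(1/(r+h(r))^3)$. The hypotheses in (4) of the parametrisation---namely $(h'(r))^2 = O((r+h(r))^3)$, $h'(r)h''(r)=O((r+h(r))^3)$, and $h^{(3)}(r)=O((r+h(r))^2)$---are calibrated precisely so that every product $h^{(k)}\Theta^{(j)}$ arising in these coefficients remains $O(1)$ uniformly as $r\to 1/2$. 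This asymptotic step is the natural extension of the one carried out in Lemma \ref{l:grthetasmoothnessr}(ii) and is the technical heart of the argument.

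Finally, for the small-$r$ regime $r\le r_1$, property (1) of $h$ gives $h\equiv 0$ and hence $F\equiv 1$, so the defining relations reduce to $g_r$ being affine in $\theta$ on each of $[-1,0]$ and $[0,1]$; in this region all three second derivatives vanish identically and the estimates are trivial. Combining the asymptotic analysis for $r\in[r_1/2,1/2)$ with the trivial case for $r\le r_1$ yields the desired uniform bound $\|\mathbf{J}^0_r\|,\|\mathbf{J}^0_\theta\|\le C\delta$.
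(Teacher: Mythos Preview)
Your approach is essentially the same as the paper's: reduce to bounding $\partial_\theta^2 g_r$, $\partial_r\partial_\theta g_r$, and $\partial_r^2 g_r$ by differentiating the defining relations once more, and show each is $O(\delta)$ via the three mechanisms you list (explicit $\delta$ factors, mean-value differences $F_\bullet(r,g_r(\theta))-F_\bullet(r,\theta)$ of size $O(\delta)$, and terms carrying $\partial_r g_r=O(\delta)$). The paper carries this out by writing $(\partial_\theta g_r - 1)$ and $\partial_r g_r$ as explicit quotients and applying the quotient rule term by term, but the content is identical to your structural description.

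One small caution: your intermediate claim that $\Theta'(r)=O((r+h(r))^{-2})$ is not quite right in isolation, since $\Theta'(r)\sim -(1+h'(r))/(2(r+h(r))^2)$ and $h'$ is unbounded. What actually matters---and what the paper checks explicitly---is that the specific \emph{products} appearing in $F_r,F_{r\theta},F_{rr}$ (such as $h'\Theta'\sin(\theta\Theta)$ or $h''(1-\cos(\theta\Theta))$) are bounded, because the trigonometric factors contribute extra decay of order $\Theta$ or $\Theta^2$. So your conclusion is correct, but when you carry out the details you should track the full combinations rather than bounding $h^{(k)}$ and $\Theta^{(j)}$ separately.
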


\begin{lemma}
\label{l:2ndd3}
There exists a constant $C>0$ such that $||J_{r}(r,g_{r}(\theta))||\leq C, ||J_{\theta}(r,\theta)||\leq C\delta$.
\end{lemma}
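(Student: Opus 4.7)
The plan is to prove both bounds by direct entrywise differentiation of the matrix
\[
J(r,\theta)=\begin{pmatrix} (1+h'(r))\sin(\theta\Theta(r))+(r+h(r))\Theta'(r)\theta\cos(\theta\Theta(r)) & (r+h(r))\Theta(r)\cos(\theta\Theta(r))\\ (1+h'(r))\cos(\theta\Theta(r))-(r+h(r))\Theta'(r)\theta\sin(\theta\Theta(r))-h'(r) & -(r+h(r))\Theta(r)\sin(\theta\Theta(r)) \end{pmatrix}
\]
displayed in Section \ref{s:aux}, feeding in the quantitative smoothness hypotheses on $h$ and the induced regularity of $\Theta$.

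For the bound $\|J_r(r,g_r(\theta))\|\le C$, I differentiate each of the four entries of $J$ in its first variable and then evaluate at $(r,g_r(\theta))$. Each resulting entry is a polynomial combination of $h'(r)$, $h''(r)$, $\Theta(r)$, $\Theta'(r)$, $\Theta''(r)$, $(r+h(r))$, trigonometric functions of $g_r(\theta)\Theta(r)$, and the bounded factor $g_r(\theta)\in[-1,1]$. The hypotheses $(h'(r))^2=O((r+h(r))^3)$, $h''(r)h'(r)=O((r+h(r))^3)$, $h^{(3)}(r)=O((r+h(r))^2)$, together with the explicit two-regime definition of $\Theta$, force $\Theta(r)(r+h(r))$, $\Theta'(r)(r+h(r))$, and their first $r$-derivatives to remain bounded on $(r_1,1/2)$, yielding a uniform bound independent of $\delta$ and $\theta$.

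For the bound $\|J_\theta(r,\theta)\|\le C\delta$, the source of the $\delta$ factor must be traced back through the product
\[
\mathbf H(r,\theta)=J(r,\theta)^{-1}(\mathbf J_H(r,\theta)-\mathbf I)J(r,g_r(\theta)),
\]
into which the $\theta$-derivative of $J$ enters only through the increment $J(r,g_r(\theta))-J(r,\theta)$, which by Lemma \ref{l:jrtheta} is already $O(\delta)$. Using the chain-rule identity
\[
\partial_\theta[J(r,g_r(\theta))-J(r,\theta)]=\bigl(J_\theta(r,g_r(\theta))-J_\theta(r,\theta)\bigr)\partial_\theta g_r(\theta)+J_\theta(r,\theta)\bigl(\partial_\theta g_r(\theta)-1\bigr),
\]
the first summand is $O(\delta)$ by the mean value theorem on $J_\theta$ in its second variable, with step size $|g_r(\theta)-\theta|=O(\delta)$ from Lemma \ref{l:grthetasmoothnesstheta}(i), provided $J_{\theta\theta}$ is uniformly bounded; the second summand is $O(\delta)$ directly from Lemma \ref{l:grthetasmoothnesstheta}(ii), since $J_\theta$ is uniformly $O(1)$ by entrywise differentiation as in the first case. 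The auxiliary bounds on $J_\theta$ and $J_{\theta\theta}$ again follow from the $h$-hypotheses combined with the formulas for $\Theta$, $\Theta'$, $\Theta''$, so the contribution of $J_\theta$ as it figures in the estimate for $\mathbf H_\theta$ is bounded by $C\delta$, which is the content of the claim.

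The hard part is maintaining uniformity as $r\to 1/2$: in that regime $h(r)$ blows up while $\Theta(r)\sim 1/(r+h(r))$ tends to zero, and $h'(r), h''(r)$ may be large. The bounds on the entries of $J_r$, $J_\theta$, $J_{\theta\theta}$ all depend on a delicate cancellation between the growth of $h$ and the decay of $\Theta$; the precise polynomial growth rates built into the hypotheses on $h, h', h'', h^{(3)}$ are tailored exactly to make $\Theta(r)(r+h(r))$ and its derivatives stay $O(1)$, and this is the essential ingredient that closes the estimate.
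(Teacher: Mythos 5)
The literal statement $\|J_\theta(r,\theta)\|\le C\delta$ cannot hold: the matrix $J$ is the Jacobian of the parametrisation $K^{-1}$ and does not depend on $\delta$ at all, so its $\theta$-derivative cannot be $O(\delta)$. You sense this and start hunting for the missing $\delta$, but the paper's own proof reveals that both inequalities are really $O(1)$ bounds on the \emph{total} $r$- and $\theta$-derivatives of the composition $(r,\theta)\mapsto J(r,g_r(\theta))$. The paper's argument is a two-line chain-rule computation:
\[
\partial_r\big[J(r,g_r(\theta))\big]=J_r(r,g_r(\theta))+J_\theta(r,g_r(\theta))\,\frac{\partial g_r}{\partial r},\qquad
\partial_\theta\big[J(r,g_r(\theta))\big]=J_\theta(r,g_r(\theta))\,\frac{\partial g_r}{\partial \theta},
\]
combined with $\|J_r\|,\|J_\theta\|\le C$ (already obtained inside the proof of Lemma~\ref{l:2ndd1}) and the bounds $|\partial_r g_r|\le C\delta$, $|\partial_\theta g_r-1|\le C\delta$ from Lemmas~\ref{l:grthetasmoothnessr} and \ref{l:grthetasmoothnesstheta}. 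Your reading of the first bound, ``differentiate $J$ in its first variable and then evaluate at $(r,g_r(\theta))$'', drops the chain-rule correction $J_\theta(r,g_r(\theta))\,\partial_r g_r$; it happens to be harmless because this term is $O(\delta)$, but it is precisely the term whose presence explains why the quantity to be controlled is a derivative of a composition rather than a pointwise evaluation of $J_r$. Your re-derivation of the entrywise bounds on $J_r$ and $J_\theta$ also duplicates work the paper outsources to Lemma~\ref{l:2ndd1}.

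For the second bound your rescue strategy---estimating $\partial_\theta[J(r,g_r(\theta))-J(r,\theta)]$ and producing an $O(\delta)$ bound on $\mathbf{M}_\theta$---is not what the paper's proof of this lemma establishes; the paper is content with the $O(1)$ bound on $\partial_\theta[J(r,g_r(\theta))]$, which suffices once it is multiplied against the $O(\delta)$ factor $\mathbf{J}_H-\mathbf{I}$ in $\mathbf{H}_\theta$. Your computation is algebraically correct and would indeed be useful when differentiating the other summand $J^{-1}\mathbf{M}$ in the expansion of $\mathbf{J}_{\Psi_\delta}-\mathbf{I}$, but as written it proves a different (and stronger, conditional) statement: it hinges on ``provided $J_{\theta\theta}$ is uniformly bounded,'' which you state as a hypothesis rather than verify. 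That verification is not hard---one checks that $(1+h'(r))\Theta(r)^2$ and $(r+h(r))\Theta'(r)\Theta(r)$ both tend to $0$ as $r\to\tfrac12$ using $h'(r)^2=O((r+h(r))^3)$---but it is a genuine missing step. So the proposal diverges from the paper both in interpreting the statement and in the mechanics of the bound, and it leaves an unproved hypothesis in the one place where it departs furthest.
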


Proofs of the above three lemmas are deferred to \ref{s:appa}.

Finally we have the following proposition.

\begin{proposition}
\label{p:psiprimedeviation}
Let $x,x'\in [0,1]^2$ be such that $\Psi_{\delta}$ is differentiable at both $x$ and $x'$. Let $S_{x,x'}$ denote the event that the line joining $x$ and $x'$ intersects $S$. Also set $$g(x,x')=\min\{|x-(1/2,0)|,|x'-(1/2,0)|,|x-(1/2,1)|,|x'-(1/2,1)|\}.$$
 Then there exists an absolute constant $C_3>0$ such that we have
$$||\Psi_{\delta}'(x)-\Psi_{\delta}'(x')||\leq C_3\delta\left(\frac{|x-x'|}{g(x,x')}\wedge 1 +1_{S_{x,x'}}\right).$$
\end{proposition}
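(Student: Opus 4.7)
The plan is to bound $\|\Psi_\delta'(x) - \Psi_\delta'(x')\|$ by tracking the variation of $\Psi_\delta'$ along the segment $[x,x']$, combining the bulk second derivative estimate of Proposition~\ref{p:psidelta2ndderivative} with a separate analysis inside the blown-up twists, where that estimate fails.

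First I would perform a trivial reduction. By Proposition~\ref{p:psideltaderivative} and the triangle inequality,
\[
\|\Psi_\delta'(x)-\Psi_\delta'(x')\|\le\|\Psi_\delta'(x)-\mathbf{I}\|+\|\Psi_\delta'(x')-\mathbf{I}\|\le 2C_1\delta,
\]
which already yields the conclusion with $C_3\ge 2C_1$ whenever $|x-x'|/g(x,x')\ge 1$ or $1_{S_{x,x'}}=1$. So it suffices to treat the case $|x-x'|<g(x,x')$ with the open segment $[x,x']$ disjoint from $S$. If moreover $[x,x']$ stays outside the blown-up twists $T_{r_1/2}$, then $\|\Psi_\delta''\|\le C_2\delta$ uniformly on the segment by Proposition~\ref{p:psidelta2ndderivative}, and integrating coordinatewise gives $\|\Psi_\delta'(x)-\Psi_\delta'(x')\|\le C_2\delta|x-x'|$; since $g(x,x')\le\sqrt{2}$ on the unit square, this rewrites as $\le(\sqrt{2}C_2)\delta|x-x'|/g(x,x')$, as required.

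The main work remains in the case where $[x,x']$ enters some blown-up twist $T_{r_1/2}^{i}$ centered at a twist corner $T^*$. Under $|x-x'|<g(x,x')$, an easy geometric argument shows that at least one endpoint, say $x$, must lie in $T_{r_1/2}^i$. Inside $T_{r_1}^i$ we have $h\equiv 0$, so $K$ reduces to polar coordinates centered at $T^*$ and the angular reparametrization $g_r$ is affine and independent of $r$; consequently, in Cartesian coordinates the Jacobian $\Psi_\delta'(y)$ for $y\in T_{r_1}^i$ depends only on the angular coordinate $\alpha(y)$ relative to $T^*$. A direct computation from the explicit form of $g_r$ gives $|\tfrac{d}{d\alpha}\Psi_\delta'|=O(\delta)$, and combined with the elementary angular bound $|\alpha(y)-\alpha(y')|\le C|y-y'|/\min(\rho(y),\rho(y'))$ (with $\rho=|\cdot-T^*|$) this yields the contribution $C\delta|x-x'|/g(x,x')$ from the portion of the segment inside $T_{r_1}^i$. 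Splitting the segment at the sphere $\rho=r_1$ when it exits $T_{r_1}^i$ reduces the remainder to the previous bulk case.

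The principal obstacle is precisely this twist case: near $T^*$ the second derivative of $\Psi_\delta$ blows up like $\delta/\rho$, so naive integration of the bulk bound fails, and the entire argument rests on the observation that all of $\Psi_\delta'$'s variation inside $T_{r_1}^i$ is purely angular. A further subtlety is caused by segments that cross the crack $\mathfrak{C}$ inside $T_{r_1}^i$, where $\Psi_\delta'$ itself has an $O(\delta)$ jump of exactly the same nature as the discontinuity at the seams; this contribution is controlled in the same fashion as the seam indicator term and is in any case dominated by the trivial $2C_1\delta$ bound from the reduction step.
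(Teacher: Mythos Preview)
Your argument follows the same overall outline as the paper's: the trivial $2C_1\delta$ reduction via Proposition~\ref{p:psideltaderivative}, a mean-value estimate via Proposition~\ref{p:psidelta2ndderivative} away from the twists, and a separate treatment near the twists. The one genuine difference is in the twist region. The paper simply records (from ``explicit computations'') that $\|\Psi_\delta''(x)\|\le C\delta/g(x,x)$ on $T_{r_1}$ and applies the mean value theorem there; you instead exploit the structural fact that since $h\equiv0$ on $T_{r_1}$ the map is a pure angular reparametrization in polar coordinates about the twist point, so $\Psi_\delta'$ depends only on the polar angle $\alpha$, and then combine $|d\Psi_\delta'/d\alpha|=O(\delta)$ with the elementary bound $|\alpha(y)-\alpha(y')|\le C|y-y'|/\min(\rho(y),\rho(y'))$. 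Your observation is correct and in fact implies the paper's second-derivative bound; it is a cleaner explanation of the same estimate.

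One small imprecision: your claim that under $|x-x'|<g(x,x')$ at least one endpoint must lie in $T_{r_1/2}^i$ is not literally true --- a short chord grazing $T_{r_1/2}^i$ can have both endpoints just outside it. What \emph{is} true, and sufficient for your argument, is that under this hypothesis the entire segment lies in $T_{r_1}^i$ (a brief computation with the foot of the perpendicular from $T^*$ to the line gives $\rho(x),\rho(x')<r_1$), so your purely angular description of $\Psi_\delta'$ applies on the whole segment and no splitting at $\rho=r_1$ is needed. Your remark about the $O(\delta)$ jump of $\Psi_\delta'$ across the crack $\mathfrak C$ is well taken; the paper's proof glosses over this point as well, and in the later applications the relevant pairs always lie on the same side of the crack.
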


\begin{proof}
Clearly it follows from Proposition \ref{p:psideltaderivative} that for all $x,x'$, we have $||\Psi_{\delta}'(x)-\Psi_{\delta}'(x')||\leq 2C_1\delta$. For $x\in T_{r_1}$ it follows from explicit computations
that
$||\Psi_{\delta}''(x)|| \leq  \frac{C}{g(x,x)}$ for some absolute constant $C>0$. It follows from mean value theorem that for $x,x'\in K^{-1}(T_{r_1})$ we have
$$||\Psi_{\delta}'(x)-\Psi_{\delta}'(x')||\leq C\delta\left(\frac{|x-x'|}{g(x,x')}\right)$$
for some absolute constant $C$. Notice that the same also follows for $x,x'\in [0,1]^2\setminus T_{r_1}$ if $S_{x,x'}$ does not hold using Proposition \ref{p:psidelta2ndderivative}. Now consider the case $x\in T_{r_1}, x'\notin T_{r_1}$, $S_{x,x'}$ does not hold. If $x\notin T_{r_1/2}$, $g(x,x')> r_1/2$ and mean value theorem once again gives the result using Proposition \ref{p:psidelta2ndderivative}. In the only remaining case, $\frac{|x-x'|}{g(x,x')}>1$ and so there is nothing to check. All these combined proves the lemma for an appropriate choice of $C_3$.
\end{proof}

\subsection{Stretching Rectangles}

For the proof of Theorem \ref{t:stretching}, we shall need to stretch not only the unit square but also squares and rectangles of different sizes. Also we shall need to stretch rectangles not only along its length ($x_1$-direction) but also along its height ($x_2$-direction). We can do these by using $\Psi_{\delta}$ composed with some linear functions as follows.

For $u=(u_1,u_2)\in \R^2$, and $a,b>0$, let $D_{1,u,a,b}:[0,1]^2\rightarrow u+[0,a]\times [0,b]$ be the bijection given by $(x_1,x_2)\mapsto u+(ax_1,bx_2)$. Similarly, let $D_{2,u,a,b}:[0,1]^2\rightarrow u+[0,a]\times [0,b]$ be the bijection given by $(x_1,x_2)\mapsto u+(ax_2,bx_1)$. When $u,a,b$ are clear from the context we shall suppress the subscript $(u,a,b)$ and write $D_1$ or $D_2$ only.

Consider the rectangle $R=u+[0,a]\times [0,b]$. For $\Psi_{\delta}=(\Psi_{\delta}^{1}, \Psi_{\delta}^{2})$ constructed as above, the function $\Psi_{\delta,R,\rightarrow}:R\rightarrow R$ is a bijection defined by $\Psi_{\delta,R,\rightarrow}=D_1\circ \Psi_{\delta}\circ D_1^{-1}$. Similarly, the function $\Psi_{\delta,R,\uparrow}:R\rightarrow R$ is a bijection defined by $\Psi_{\delta,R,\uparrow}=D_2\circ \Psi_{\delta}\circ D_2^{-1}$. Note that $\Psi_{\delta,a,b,\rightarrow}$ stretches the rectangle in a left-right direction whereas $\Psi_{\delta,a,b,\uparrow}$ stretches the rectangle in an up-down direction.

\section{Dyadic Squares}
\label{s:dyadic}
For a rectangle $R$ in $\R^2$, whose sides are aligned with the coordinate axes (i.e., of the for $[x,x+a]\times [y,y+b]$), we call $a$ to be the \emph{length} of $R$, and $b$ to be the height of $R$. At each level $n\geq 0$ we write $[0,1]^2$ as a union of (not necessarily disjoint) rectangles aligned with the co-ordinate axes $\{\Lambda_{n}^{j}\}_{j=1}^{2^{n-1}}$ satisfying the following properties.

\begin{enumerate}
\item $\Lambda_1:=\Lambda_1^{1}=[0,1]^2$.
\item For a fixed $n$, for each $j\in [2^{n-1}]$, area of $\Lambda_{n}^{j}=2^{-(n-1)}$.
\item If $n$ is odd, then each $\Lambda_{n}^{j}$ is a square, i.e., has length and height equal. If $n$ is even, then for each $j\in [2^{n-1}]$, the height of $\Lambda_{n}^{j}$ is twice the length of $\Lambda_{n}^{j}$.
\item For each $n$ and $j$, $\Lambda_{n}^{j}=\Lambda_{n+1}^{2j-1}\cup \Lambda_{n+1}^{2j}$. For $n$ odd, $\Lambda_{n}^{j}$ and $\Lambda_{n+1}^{2j-1}$ has same height. For $n$ even, $\Lambda_{n}^{j}$ and $\Lambda_{n+1}^{2j-1}$ have same length.
\end{enumerate}

It is clear that there is a way to partition $\Lambda_1$ into rectangles at each level in such a manner, see Figure \ref{f:dyadic}. Suppose $u_{n,j}$ denote the top right corner of $\Lambda_{n,j}$. For definiteness we shall adopt the following convention. For each $j$, $u_{n,2j}\succeq u_{n,2j-1}$ where $\succeq$ denotes the lexicographic ordering on $\R^2$. It is clear that under such a convention there is a unique way to construct $\Lambda_{n}^{j}$s. We shall call the $\Lambda_{n,j}$ \emph{dyadic boxes} at level $n$.

\begin{figure*}[h]
\begin{center}
\includegraphics[height=12cm,width=16cm]{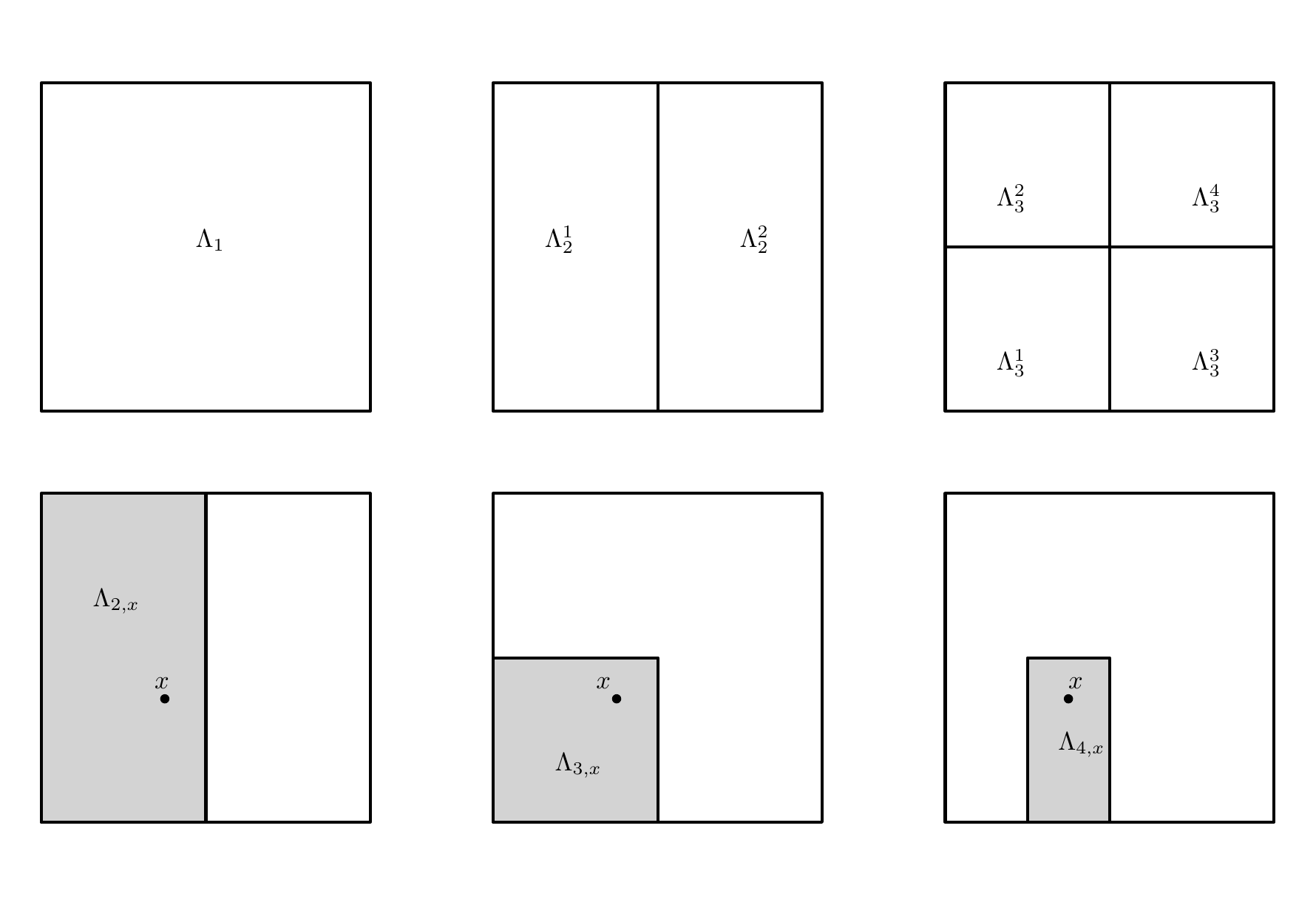}
\caption{Dyadic squares at different levels}
\label{f:dyadic}
\end{center}
\end{figure*}

 Let $$B_n=\{u\in \Lambda_1:u\in \Lambda_n^j\cap \Lambda_n^{j'}~\text{for some}~j\neq j'\}.$$
It is then clear from the construction that we have for $B:=\cup_n B_n$,  $\lambda(B)=0$.

For $x\in \Lambda_1\setminus B$, let us define $\Lambda_{n,x}=\Lambda_{n}^{j}$ where $j$ is such that $x\in \Lambda_{n}^{j}$. Let $\rho_{n,x}$ denote the density of $A$ in $\Lambda_{n,x}$, i.e.,
$$\rho_{n,x}=\frac{\lambda(A\cap \Lambda_{n,x})}{\lambda(\Lambda_{n,x})}.$$
Also, define $\Delta_{n,x}=\rho_{n+1,x}-\rho_{n,x}$. Notice that $|\Delta_{n,x}|$ is constant on $\Lambda_{n,x}\setminus B$.
We shall let  $v_{n,x}$ denote the bottom left corner of $\Lambda_{n,x}$. We shall denote by $L_1(n,x)$ and $L_2(n,x)$, the length and height of $\Lambda_{n,x}$ respectively. Also $\delta(n,x)=\frac{\Delta_{n,y}}{\rho_{n-1,x}}$ for $y\in \Lambda_{n,x}$ which is very close to $v_{n,x}$.

\subsection{Crack, Seams and Twists on Dyadic Boxes}
For $x\in [0,1]^2$, consider $\Lambda_{i,x}$, the $i$-th level dyadic box of $x$.  If $i$ is even, then define $S_{i,x}=D_{2,v_{i,x},L_1(i,x), L_2(i,x)}(S)$ to be the \emph{seams} of $\Lambda_{i,x}$, $\mathfrak{C}_{i,x}=D_{2}(\mathfrak{C})$ to be the \emph{crack} in $\Lambda_{i,x}$, and $T_{i,x}=D_{2}(T)$ to be the \emph{twists} in $\Lambda_{i,x}$. If $i$ is odd, we have same definitions except $D_2$ is replaced by $D_1$. Notice that, with these definition, it is clear that dyadic boxes at level $(i+1)$ are created by splitting level $i$ dyadic boxes in half along the cracks at level $i$.

\section{Stretching}
\label{s:mart}
\subsection{Martingales}
Now let $X$ be a random variable which is uniformly distributed on $\Lambda_1$. Notice that $\Lambda_{n,X}$, $\rho_{n,X}$, $\Delta_{n,X}$ are almost surely well-defined. Let $\cf_n$ denote the $\sigma$-algebra generated by $\Lambda_{n,X}$. The following observation is trivial.

\begin{observation}
\label{o:martingale}
We have $\rho_{n,X}=\P[X\in A\mid \cf_n]$ is a martingale with respect the filtration $\{\cf_n\}_{n\geq 1}$. Furthermore, $\rho_{n,X}\rightarrow I(X\in A)$ a.s. as $n\rightarrow\infty$.
\end{observation}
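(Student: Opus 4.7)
The plan is to establish both assertions directly from the definitions using standard martingale convergence machinery, with the only substantive content being a verification that the dyadic partitions generate the Borel $\sigma$-algebra on $[0,1]^2$ up to the $\lambda$-null set $B$.

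First I would check the martingale identity itself. Since $X$ is uniformly distributed on $\Lambda_1$ and $\Lambda_{n,X}$ is the (a.s.\ unique) level-$n$ dyadic box containing $X$, the conditional distribution of $X$ given $\cf_n=\sigma(\Lambda_{n,X})$ is uniform on $\Lambda_{n,X}$. Hence
\[
\P[X\in A\mid \cf_n]=\frac{\lambda(A\cap \Lambda_{n,X})}{\lambda(\Lambda_{n,X})}=\rho_{n,X}\qquad \text{a.s.}
\]
Because every level-$(n+1)$ box is contained in a level-$n$ box, the filtration $\{\cf_n\}$ is increasing, so by the tower property
\[
\E[\rho_{n+1,X}\mid \cf_n]=\E\bigl[\E[\mathbf{1}_{X\in A}\mid \cf_{n+1}]\mid \cf_n\bigr]=\E[\mathbf{1}_{X\in A}\mid \cf_n]=\rho_{n,X},
\]
which is exactly the martingale property.

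Next I would invoke martingale convergence. The sequence $\rho_{n,X}$ is uniformly bounded by $1$, so Doob's convergence theorem gives a.s.\ (and $L^1$) convergence $\rho_{n,X}\to M_\infty$ for some $\cf_\infty$-measurable random variable, where $\cf_\infty=\sigma\bigl(\bigcup_n \cf_n\bigr)$. L\'evy's upward theorem identifies the limit as $M_\infty=\E[\mathbf{1}_{X\in A}\mid \cf_\infty]$.

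The remaining step, which is the only point that requires a short argument, is to show $\mathbf{1}_{X\in A}$ is itself $\cf_\infty$-measurable, so that $M_\infty=\mathbf{1}_{X\in A}$ a.s. For $x\notin B$ the length and height of $\Lambda_{n,x}$ are halved (one at a time) at each successive level, hence both tend to $0$ and $\bigcap_n \Lambda_{n,x}=\{x\}$. Consequently the partitions $\{\Lambda_n^j\}_{j}$ generate, as $n\to\infty$, a $\sigma$-algebra on $\Lambda_1\setminus B$ that contains every open rectangle with dyadic corners, hence the full Borel $\sigma$-algebra on $[0,1]^2$ up to the $\lambda$-null set $B$. In particular $\{X\in A\}\in \cf_\infty$ modulo a null set, so $M_\infty=\mathbf{1}_{X\in A}$ a.s., completing the proof. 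I do not anticipate a real obstacle here; the only delicate point is handling the measure-zero ``boundary'' set $B$, which is harmless since $\P[X\in B]=0$.
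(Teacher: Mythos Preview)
Your argument is correct; the paper itself declares this observation ``trivial'' and gives no proof, so there is nothing substantive to compare against. Your write-up simply fills in the standard details---conditional uniformity on dyadic boxes, the tower property, and L\'evy's upward theorem together with the fact that the dyadic partitions generate the Borel $\sigma$-algebra off the null set $B$---exactly as the paper implicitly assumes.
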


Clearly it follows from definitions that $|\Delta_{n,X}|$ is $\cf_{n}$ measurable. This leads to following easy and useful observation.

\begin{observation}
\label{o:martingalevariance}
Now consider a random time $\tau$ which is $\cf_n$-measurable. Then we have using the Optional Stopping Theorem
\begin{equation}
\label{e:varrho}
{\rm Var}[\rho_{\tau,X}]=\E\left[\sum_{i=2}^{\tau}\E[(\rho_{i,X}-\rho_{i-1,X})^2\mid \cf_{i-1}]\right]=\E \left(\sum_{i=1}^{\tau-1} \Delta_{i,X}^2\right).
\end{equation}
\end{observation}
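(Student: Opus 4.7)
\medskip

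\noindent\textbf{Proof proposal for Observation \ref{o:martingalevariance}.}

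The plan is to reduce both equalities to standard martingale manipulations. First, since $\rho_{1,X}=\lambda(A)$ is a deterministic constant, the Optional Stopping Theorem (applied to the bounded martingale $\rho_{n,X}$, which takes values in $[0,1]$, at the stopping time $\tau$) yields $\E[\rho_{\tau,X}]=\lambda(A)$, so $\var(\rho_{\tau,X})=\E[\rho_{\tau,X}^2]-\lambda(A)^2$. Writing $\rho_{\tau,X}-\rho_{1,X}$ as a telescoping sum of martingale increments gives
\begin{equation*}
\rho_{\tau,X}^2 = \rho_{1,X}^2 + 2\rho_{1,X}\sum_{i=2}^{\tau}(\rho_{i,X}-\rho_{i-1,X}) + \Bigl(\sum_{i=2}^{\tau}(\rho_{i,X}-\rho_{i-1,X})\Bigr)^2.
\end{equation*}

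Next I would expand the square and use the orthogonality of martingale increments. For any $i<j\leq \tau$, conditioning on $\cf_{j-1}$ gives $\E[(\rho_{i,X}-\rho_{i-1,X})(\rho_{j,X}-\rho_{j-1,X})\mathbf{1}_{\{j\leq \tau\}}]=0$, since $\{j\leq\tau\}\in\cf_{j-1}$ (using that $\tau$ is a stopping time) and the increment $\rho_{j,X}-\rho_{j-1,X}$ has mean zero given $\cf_{j-1}$. Similarly, the cross term with $\rho_{1,X}$ vanishes in expectation. This leaves
\begin{equation*}
\E[\rho_{\tau,X}^2]-\lambda(A)^2 = \E\Bigl[\sum_{i=2}^{\tau}(\rho_{i,X}-\rho_{i-1,X})^2\Bigr] = \E\Bigl[\sum_{i=2}^{\tau}\E[(\rho_{i,X}-\rho_{i-1,X})^2\mid \cf_{i-1}]\Bigr],
\end{equation*}
where the second equality is the tower property applied term by term (again using $\{i\leq\tau\}\in\cf_{i-1}$). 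This gives the first equality in \eqref{e:varrho}.

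For the second equality, it is a bookkeeping step: by definition of the increment, $\rho_{i,X}-\rho_{i-1,X}=\Delta_{i-1,X}$, so
\begin{equation*}
\sum_{i=2}^{\tau}(\rho_{i,X}-\rho_{i-1,X})^2=\sum_{i=1}^{\tau-1}\Delta_{i,X}^2,
\end{equation*}
after reindexing $i\mapsto i+1$. Taking expectations yields the right-hand side of \eqref{e:varrho}. The only mild subtlety is justifying the interchange of sum and expectation and the application of Optional Stopping; both are immediate if $\tau$ is bounded (which appears to be the intended setting, since $\tau$ is described as $\cf_n$-measurable for some $n$), and otherwise follow from uniform integrability since $\rho_{n,X}\in[0,1]$ almost surely.
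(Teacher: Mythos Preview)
Your proof is correct and follows the standard martingale orthogonality/Optional Stopping argument that the paper merely alludes to (the paper states this as an observation without a detailed proof). The one small difference in emphasis is that the paper highlights, just before the observation, that $|\Delta_{n,X}|$ is $\cf_n$-measurable, which lets one pass directly from $\E[(\rho_{i,X}-\rho_{i-1,X})^2\mid\cf_{i-1}]$ to $\Delta_{i-1,X}^2$; your route through the unconditional sum of squared increments is equivalent and equally valid.
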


\subsection{Stretching at different scales}
For $n=1,2,\ldots, $, define the function $\varphi_{n}$ on $\Lambda_{1}$ as follows. If $n$ is odd and $y\in \Lambda_{n,x}$, we define $$\varphi_{n}(y)=\Psi_{\delta(n,x),\Lambda_{n,x},\rightarrow}(y).$$
If $n$ is even and $y\in \Lambda_{n,x}$, we define $$\varphi_{n}(y)=\Psi_{\delta(n,x),\Lambda_{n,x},\uparrow}(y).$$
Clearly, each $\varphi_{n}$ is a bijection on $\Lambda_1$ that is identity on the boundary.

Define
\begin{equation}
\label{e:Phicomposed}
\Phi_{n}(x)=\varphi_1 \circ \ldots \circ \varphi_n(x).
\end{equation}

Clearly, $\Phi_{n}$ is also a bijection from $\Lambda_{1}$ to itself which is identity on the boundary. Also, set $\Phi_0$ to be the identity. In Figure \ref{f:phi1phi2}, we illustrate the sequence of functions $\Phi_0$, $\Phi_1$, $\Phi_2$, where $\delta(1,x)=0.5$ and $\delta(2,x)=-0.5$ on $\Lambda_2^1$.

\begin{figure*}[h]
\begin{center}
\includegraphics[height=6cm,width=16cm]{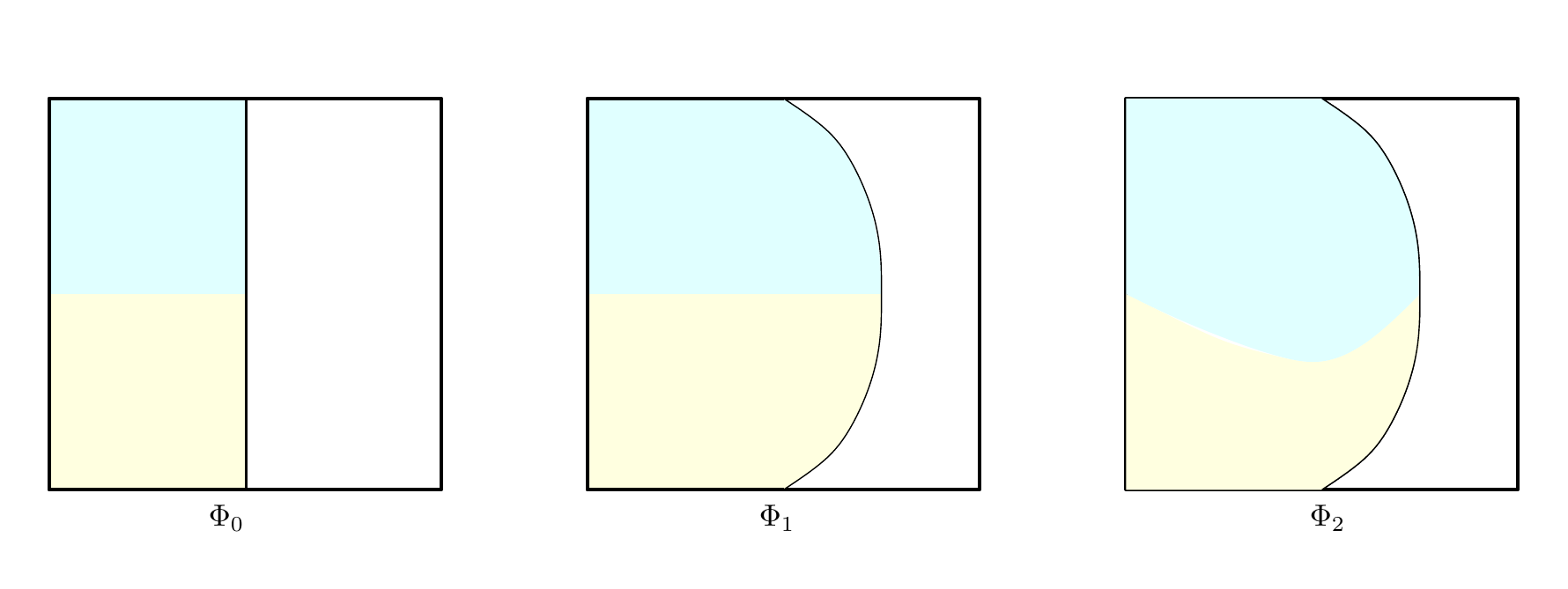}
\caption{$\delta(1,x)=\frac{1}{2}$ and $\delta(2,x)=-0.5$ on $\Lambda_2^1$}
\label{f:phi1phi2}
\end{center}
\end{figure*}

Our primary objective will be to control the derivative of $\Phi_n$ which we can write as
\begin{equation}
\label{e:Phiprimecomposed}
\Phi_{n}'(x)=\prod_{i=1}^{n}\varphi_i'(\varphi_{i+1} \circ \ldots \circ \varphi_n(x)).
\end{equation}
Notice that the product in the above equation is a product of matrices. We will anaylse the following $\cf_n$-measurable approximation to $\Phi_{n}'$,
\begin{equation}
\label{e:yn}
\mathbf{Y}_n=\prod_{i=1}^{n}\E \left[\varphi_i'(\varphi_{i+1} \circ \ldots \circ \varphi_n(X))\mid \cf_n\right].
\end{equation}
We further define the quantities
\begin{align}
\mathbf{W}_{i,n}&=\E \left[\varphi_i'(\varphi_{i+1} \circ \ldots \circ \varphi_n(X))\mid \cf_n\right]-\E \left[\varphi_i'(\varphi_{i+1} \circ \ldots \circ \varphi_{n-1}(X))\mid \cf_{n-1}\right]; \\
\mathbf{V}_{i,n}&=\E \left[\varphi_i'(\varphi_{i+1} \circ \ldots \circ \varphi_{n-1}(X))\mid \cf_n\right]-\E \left[\varphi_i'(\varphi_{i+1} \circ \ldots \circ \varphi_{n-1}(X))\mid \cf_{n-1}\right]; \\
\mathbf{U}_{i,n}&=\E \left[\varphi_i'(\varphi_{i+1} \circ \ldots \circ \varphi_{n}(X))\mid \cf_n\right]-\E \left[\varphi_i'(\varphi_{i+1} \circ \ldots \circ \varphi_{n-1}(X))\mid \cf_{n}\right].
\end{align}
It is clear that $\mathbf{W}_{i,n}=\mathbf{U}_{i,n}+\mathbf{V}_{i,n}$. Observe that
\[
\E \left[\varphi_i'(\varphi_{i+1} \circ \ldots \circ \varphi_n(X))\mid \cf_n\right]-\sum_{j=i+1}^{n} \mathbf{W}_{i,j}=\E[\varphi_i'(X)\mid \cf_i].
\]
Since $\varphi_{i}(X)$ is identity on the boundary of $\Lambda_{i,X}$, Green's Theorem implies that integral of $\varphi_{i}'$ over $\Lambda_{i,X}$ is equal to the integral of $\varphi_i$ over the boundary of $\Lambda_{i,X}$ and hence
\[
\E[\varphi_i'(X)\mid \cf_i]=\mathbf{\rm I}.
\]
It follows that
\begin{equation}
\label{e:ysumprod}
\mathbf{Y}_n=\prod_{i=1}^{n}\left(\mathbf{\rm I}+\sum_{j=i+1}^{n}\mathbf{W}_{i,j}\right).
\end{equation}

\subsection*{Dealing with Twists and Seams:}
While the derivative of $\Psi$ is well behaved in most regions, observe that  $\Psi_{\delta}$ is not differentiable on the \emph{seams} $S$, and the second derivative is unbounded on the \emph{twists} $T$. These shortcomings in smoothness are inherited by the functions $\phi_{i}$. To deal with these issues we shall need the following notations.

For a fixed $x$, let
$$J_{i,x}=\max\{\ell:\Lambda_{i+\ell,x}~\text{intersects}~T_{i,x}\}.$$
Clearly $J_{i,x}$ is almost surely well defined. It measures for how long the $n$-th level dyadic box containing $x$ intersected the \emph{twists} in $\Lambda_{i,x}$.

For a fixed $x$ and $i$, and for $n>i$, let $A_{i,n,x}$ denote the event that $\varphi_{i+1}\circ \varphi_{i+2}\circ \ldots \circ \varphi_{n}(\Lambda_{n-1,x})$ intersects $S_{i,x}$. Let
$$\alpha_{i,n,x}=(n-i)1_{A_{i,n,x}}.$$
By construction the sets $A_{i,n,x}$ is decreasing in $n$. Now let $\alpha_{i,x}=\max\{n:\alpha_{i,n,x}\}$ and define $\beta_{i,x}=J_{i,x}\vee \alpha_{i,x}$. We set $\tilde{\Delta}_{i,x}=|\Delta_{i,x}|2^{\beta_{i,x}/10}$, reweighting $\Delta_{i,x}$ when it is close to a twist or seam.
Finally for $n>i$, we define
$$\tilde{\Delta}_{i,x,n}=|\Delta_{i,x}|2^{(\beta_{i,x}\wedge (n-i))/10}.$$
Notice that $\tilde{\Delta}_{i,x,n}$ is increasing in $n$ and $\tilde{\Delta}_{i,x,n}\leq \tilde{\Delta}_{i,x}$. Also notice that $\tilde{\Delta}_{i,x,n}$ is $\cf_n$ measurable.

\section{Stopping Times}
\label{s:stop}

The primary philosophy of our proof is to keep stretching $A$ on smaller scales but stopping before it violates the Lipschitz constant. To implement this approach we define a series of stopping times. Let $\varepsilon_{1}, \varepsilon_{2}$ be small positive constants that will be chosen later in the proof and set $\varepsilon_{3}=\frac{1}{2}\min\{\gamma, 1-\gamma'\}, \varepsilon_{4}=\frac{\eta}{200}$.
We set
\begin{align}
\label{e:tau1}
\tau_1&:=\tau_1(\varepsilon_1)=\inf\{n:\sum_{i=1}^{n-1}
\tilde{\Delta}_{i,X,n}^{2}>\varepsilon_{1}\},\\
\label{e:tau2}
\tau_2&:=\tau_2(\varepsilon_2)=\inf\{n:\Delta_{n,X}^2>\varepsilon_{2}\},\\
\label{e:tau3}
\tau_3&:=\tau_3(\varepsilon_3)=\inf\{n:|\rho_{n,X}-\lambda(A)|>\varepsilon_{3}\},\\
\label{e:tau4}
\tau_4&:=\tau_4(\varepsilon_4)=\inf\{n:||Y_n(X)-I||_{\infty}>\varepsilon_{4}\}.
\end{align}
Also we define
$$\tau:=\tau_1\wedge \tau_2 \wedge \tau_3 \wedge \tau_4 .$$
It is clear from Observation \ref{o:martingale} that $\tau$ is finite almost surely. We primary work of the next two sections is to prove the following theorem.

\begin{theorem}
\label{t:stoptime}
There exists $\varepsilon_1$ and $\varepsilon_2>0$ such that for the stopping times defined above, we have
$$\P[\tau_4=\tau]< \frac{1}{3}.$$
\end{theorem}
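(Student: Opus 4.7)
The plan is to prove Theorem~\ref{t:stoptime} by an $L^2$ bound: I will show there is an absolute constant $C$ with $\E\|\mathbf{Y}_\tau-\mathbf{I}\|^2\le C(\varepsilon_1+\varepsilon_2)$, so that Markov's inequality yields $\P[\|\mathbf{Y}_\tau-\mathbf{I}\|_\infty>\varepsilon_4]\le C(\varepsilon_1+\varepsilon_2)/\varepsilon_4^2<\tfrac13$ once $\varepsilon_1,\varepsilon_2$ are chosen small enough. On the event $\{\tau=\tau_4\}$ the bound $\|\mathbf{Y}_\tau-\mathbf{I}\|_\infty>\varepsilon_4$ holds by the very definition of $\tau_4$, so this will establish the theorem.

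To estimate $\mathbf{Y}_\tau-\mathbf{I}$ I use the product representation~(\ref{e:ysumprod}). On $\{n\le \tau\le\tau_4\}$ every factor $\mathbf{I}+\sum_{j>i}^{n}\mathbf{W}_{i,j}$ is within $\varepsilon_4$ of the identity (by the definition of $\tau_4$ together with Proposition~\ref{p:psideltaderivative}), so expanding the product and retaining the leading order gives
\[
\mathbf{Y}_\tau-\mathbf{I} \;=\; \sum_{1\le i<j\le\tau}\mathbf{W}_{i,j} \;+\; \mathcal{R},
\]
where the remainder $\mathcal{R}$ collects products of two or more $\mathbf{W}$'s drawn from distinct levels; a straightforward iteration of the first-order estimate shows $\|\mathcal{R}\|\lesssim\varepsilon_4\,\bigl\|\sum_{i<j}\mathbf{W}_{i,j}\bigr\|$, so it is subdominant once the linear part is bounded. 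I will then estimate the two pieces $\mathbf{V}_{i,j}$ and $\mathbf{U}_{i,j}$ of the splitting $\mathbf{W}_{i,j}=\mathbf{V}_{i,j}+\mathbf{U}_{i,j}$ separately (this is the content of~\S\ref{s:estuv}).

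For the martingale piece $\mathbf{V}_{i,j}$: fixing $i$, the sequence $j\mapsto\mathbf{V}_{i,j}$ is a martingale difference with respect to $\{\cf_n\}_{n\ge i+1}$, since $\mathbf{V}_{i,j}$ is the jump at time $j$ of the conditional expectation of a $\cf_\infty$-measurable random matrix and therefore has vanishing conditional mean given $\cf_{j-1}$; orthogonality across different $j$'s follows from the tower property. Consequently
\[
\E\Bigl\|\sum_{i<j\le\tau}\mathbf{V}_{i,j}\Bigr\|^2 \;\le\; \sum_{i<j}\E\|\mathbf{V}_{i,j}\|^2,
\]
and Propositions~\ref{p:psideltaderivative}--\ref{p:psidelta2ndderivative} applied along the random trajectory $\varphi_{i+1}\circ\cdots\circ\varphi_{j-1}(X)$ yield pointwise bounds of the form $\|\mathbf{V}_{i,j}\|\lesssim\tilde\Delta_{i,X,j}|\Delta_{j,X}|$, the reweighting $\tilde\Delta$ being precisely designed to absorb the lack of smoothness near twists and seams. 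Summing and using Observation~\ref{o:martingalevariance} together with the $\tau_1$-cap gives an $O(\varepsilon_1)$ total. For $\mathbf{U}_{i,j}$: this captures how $\varphi_i'$ shifts when one additional inner stretch $\varphi_j$ is composed, so Proposition~\ref{p:distance} bounds the displacement by $|\Delta_{j,X}|$ times the scale of $\Lambda_{j,X}$, and Proposition~\ref{p:psiprimedeviation} converts this to $\|\mathbf{U}_{i,j}\|\lesssim\tilde\Delta_{i,X,j}\,|\Delta_{j,X}|$; summing over $i<j$ and applying Cauchy--Schwarz against the $\tau_1,\tau_2$ caps produces another $O(\varepsilon_1+\varepsilon_2)$ contribution.

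The main obstacle I expect is the twists-and-seams analysis underpinning both estimates. Proposition~\ref{p:psidelta2ndderivative} blows up near the twists and Proposition~\ref{p:psiprimedeviation} has an extra $\mathbf{1}_{S_{x,x'}}$ that switches on whenever a seam is crossed, so controlling $\|\mathbf{V}_{i,j}\|$ and $\|\mathbf{U}_{i,j}\|$ pointwise requires verifying that the $2^{\beta_{i,x}/10}$ inflation baked into $\tilde\Delta_{i,X,n}$ is sufficient to swallow these singularities in the eventual $L^2$ sum. That is exactly what will let us reduce the whole estimate to the single inequality $\sum_{i<\tau}\tilde\Delta^2_{i,X,\tau}<\varepsilon_1$ cutting out $\tau_1$, and then to Markov.
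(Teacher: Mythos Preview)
Your overall strategy---bound $\E\|\mathbf{Y}_\tau-\mathbf{I}\|^2$ and apply Markov---is a reasonable alternative to the paper's route, which instead splits $\mathbf{Y}_n=\mathbf{M}_n+\mathbf{D}_n$ into an explicit matrix martingale $\mathbf{M}_n$ (built from the $\mathbf{V}$-increments) and a remainder $\mathbf{D}_n$ (carrying the $\mathbf{U}$-terms and all higher order products), bounds $\|\mathbf{D}_n\|$ \emph{deterministically} on $\{n<\tau'\}$, and controls $\mathbf{M}_n$ with Freedman's martingale Bernstein inequality. So the comparison is: you hope to get by with a second moment and Markov, while the paper uses a genuine concentration inequality. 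For the target probability $\tfrac13$ your route could in principle work, but as written the proposal has real gaps.

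The central error is in the pointwise bounds you quote for $\mathbf{V}_{i,j}$ and $\mathbf{U}_{i,j}$. The term $\mathbf{V}_{i,j}$ does not involve $\varphi_j$ at all (it is the jump, upon refining $\cf_{j-1}$ to $\cf_j$, of the conditional expectation of $\varphi_i'(\varphi_{i+1}\circ\cdots\circ\varphi_{j-1}(X))$), so there is no mechanism producing a factor $|\Delta_{j,X}|$; the correct estimate is Proposition~\ref{p:vin}, namely $\|\mathbf{V}_{i,j}\|\lesssim \tilde{\Delta}_{i,X,j}\,2^{-(j-i)/20}$. Likewise the bound for $\mathbf{U}_{i,j}$ in Proposition~\ref{p:uin} is $\|\mathbf{U}_{i,j}\|\lesssim \tilde{\Delta}_{i,X,j}\,|\Delta_{j,X}|\,2^{-(j-i)/20}$. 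The exponential decay in $(j-i)$ is not decorative: without it your double sum $\sum_{i<j\le\tau}$ cannot be closed. For instance, with only $\|\mathbf{U}_{i,j}\|\lesssim\tilde{\Delta}_{i,X,j}|\Delta_{j,X}|$ and AM--GM you get $\sum_{i<j\le\tau}(\tilde{\Delta}_{i,X,j}^2+\Delta_{j,X}^2)$; the $\tau_1$-cap gives $\sum_{i<j}\tilde{\Delta}_{i,X,j}^2\le\varepsilon_1$ for each fixed $j$, but summing over $j$ then produces $\tau\varepsilon_1$, which is unbounded. The paper's computations~(\ref{e:mndiffbound1})--(\ref{e:mndiffbound2}) and~(\ref{e:anbound})--(\ref{e:bnbound4}) show exactly how the geometric factor is used to exchange the order of summation and collapse everything to $\sum_i\tilde{\Delta}_{i,X}^2\le\varepsilon_1$.

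Two smaller points. First, the assertion that ``every factor $\mathbf{I}+\sum_{j}\mathbf{W}_{i,j}$ is within $\varepsilon_4$ of the identity by the definition of $\tau_4$'' is not right: $\tau_4$ controls the full product $\mathbf{Y}_n$, not the individual factors. The factor bound comes instead from Proposition~\ref{p:psideltaderivative} together with $\tau_2$ and $\tau_3$ (giving $\|\varphi_i'-\mathbf{I}\|\le C_5|\Delta_{i,X}|\le C_5\sqrt{\varepsilon_2}$). Second, your treatment of the remainder $\mathcal{R}$ is too loose: the higher-order terms are not obviously $\lesssim\varepsilon_4\|\sum\mathbf{W}_{i,j}\|$, and the paper devotes Lemma~\ref{l:dn} (see~(\ref{e:bnbound1})--(\ref{e:bnbound4})) to bounding them carefully, again relying essentially on the exponential decay. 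If you repair the pointwise bounds to include $2^{-(j-i)/20}$ and redo the summations along the lines of~(\ref{e:mndiffbound1})--(\ref{e:mndiffbound2}), an $L^2$/Markov argument should go through; but as stated, the sums do not converge and the proof does not close.
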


\section{Estimates on $\mathbf{U}$ and $\mathbf{V}$}
In this section we show that for a fixed $i$, on $\{n<\tau\}$, $||\mathbf{U}_{i,n}||$ and $||\mathbf{V}_{i,n}||$ cannot be too large and decays exponentially with $(n-i)$. We start with the estimate on $\mathbf{V}_{i,n}$.
\label{s:estuv}
\begin{proposition}
\label{p:vin}
There exists some absolute constant $C_4>0$ such that for each $i\geq 1$, and $n>i$,  we have
\begin{equation}
\label{e:vinbound}
||\mathbf{V}_{i,n}(X)||1_{\{n<\tau\}}\leq C_4\tilde{\Delta}_{i,X,n}2^{-(n-i)/20}\leq C_4\tilde{\Delta}_{i,X}2^{-(n-i)/20}.
\end{equation}
\end{proposition}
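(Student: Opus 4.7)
Throughout, write $F = \varphi_{i+1} \circ \cdots \circ \varphi_{n-1}$, which is a bijection of $\Lambda_{i,X}$ onto itself that is $\cf_{n-1}$-measurable (and a fortiori $\cf_n$-measurable). The second inequality in \eqref{e:vinbound} is immediate from the monotonicity $\tilde{\Delta}_{i,X,n} \le \tilde{\Delta}_{i,X}$, so it suffices to establish the first.

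The plan is to reduce $\mathbf{V}_{i,n}$ to an oscillation bound and then split into two cases according to whether $F(\Lambda_{n-1,X})$ meets the singular structure of $\varphi_i$ (the twists and seams of $\Lambda_{i,X}$).

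\emph{Step 1 (Reduction to oscillation).} Conditional on $\cf_{n-1}$ the point $X$ is uniform on $\Lambda_{n-1,X}$, while conditional on $\cf_n$ it is uniform on the specific half $\Lambda_{n,X}\subset\Lambda_{n-1,X}$. Since $F$ is $\cf_{n-1}$-measurable, $\mathbf{V}_{i,n}$ is the jump of the conditional expectation of $\varphi_i'\circ F$ from $\cf_{n-1}$ to $\cf_n$. A standard bound gives
\[
\|\mathbf{V}_{i,n}(X)\| \;\le\; \sup_{y,y'\in \Lambda_{n-1,X}} \bigl\|\varphi_i'(F(y)) - \varphi_i'(F(y'))\bigr\|.
\]

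\emph{Step 2 (Diameter of the image).} On $\{n<\tau\}\subset\{n<\tau_4\}$, the approximate Jacobian $\mathbf{Y}_{n-1}$ is within $\varepsilon_4$ of the identity; together with Propositions~\ref{p:psideltaderivative} and the definition of $\mathbf{W}_{i,j},\mathbf{U}_{i,j},\mathbf{V}_{i,j}$ this will be used to see that $F$ is bi-Lipschitz with constant at most $1+2\varepsilon_4$ (the usual telescoping argument applied to $F'$). Consequently
\[
\mathrm{diam}\bigl(F(\Lambda_{n-1,X})\bigr) \;\le\; (1+2\varepsilon_4)\,\mathrm{diam}(\Lambda_{n-1,X}) \;\le\; C\,2^{-n/2}.
\]

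\emph{Step 3 (The easy case: away from twists and seams).} Suppose $\beta_{i,X}<n-i-1$, i.e.\ $\Lambda_{n-1,X}$ has already separated from the twists $T_{i,X}$ and $F(\Lambda_{n-1,X})$ does not meet the seams $S_{i,X}$. After rescaling $\Lambda_{i,X}$ to the unit square (so derivatives acquire a factor $L(i,X)^{-1}\asymp 2^{i/2}$), Proposition~\ref{p:psidelta2ndderivative} gives $\|\varphi_i''\|\le C\,|\delta(i,X)|\,2^{i/2}$ throughout $F(\Lambda_{n-1,X})$. The mean value theorem then yields
\[
\sup_{y,y'}\|\varphi_i'(F(y))-\varphi_i'(F(y'))\| \;\le\; C|\delta(i,X)|\,2^{i/2}\cdot 2^{-n/2} \;=\; C|\delta(i,X)|\,2^{-(n-i)/2}.
\]
On $\{n<\tau_3\}$ one has $\rho_{i-1,X}\ge\varepsilon_3$, hence $|\delta(i,X)|\le\varepsilon_3^{-1}|\Delta_{i,X}|$. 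Since $\tilde\Delta_{i,X,n}\ge|\Delta_{i,X}|$ and $2^{-(n-i)/2}\le 2^{-(n-i)/20}$, this gives the desired bound.

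\emph{Step 4 (The bad case: close to a twist or crossing a seam).} If $\beta_{i,X}\wedge(n-i)\ge n-i-1$, simply use the crude bound from Proposition~\ref{p:psideltaderivative} (combined with Proposition~\ref{p:psiprimedeviation} when the segment crosses $S_{i,X}$): $\|\varphi_i'(F(y))-\varphi_i'(F(y'))\|\le 2C_1|\delta(i,X)|+C_3|\delta(i,X)|\le C|\Delta_{i,X}|$. In this regime
\[
\tilde\Delta_{i,X,n}\,2^{-(n-i)/20} \;=\; |\Delta_{i,X}|\,2^{(\beta_{i,X}\wedge(n-i))/10 - (n-i)/20} \;\ge\; c\,|\Delta_{i,X}|\,2^{(n-i)/20},
\]
which dominates the crude bound for any sufficiently large $C_4$.

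\emph{Main obstacle.} The delicate point is the intermediate regime where $F(\Lambda_{n-1,X})$ sits at moderate distance from a twist of $\Lambda_{i,X}$: here neither the $C^2$ estimate (Step 3) nor the crude $C^0$ estimate (Step 4) is tight, and one must instead invoke Proposition~\ref{p:psiprimedeviation} directly, controlling the ratio $|y-y'|/g(y,y')$ in terms of the scale $L(i,X)\,2^{-\beta_{i,X}/2}$ of the distance from $F(\Lambda_{n-1,X})$ to the nearest corner of $\Lambda_{i,X}$. The $2^{\beta_{i,X}/10}$ factor built into $\tilde\Delta_{i,X,n}$ is precisely engineered to absorb the resulting loss while still leaving the geometric factor $2^{-(n-i)/20}$ in place; verifying this bookkeeping — together with the scaling passage from $\Psi_\delta$ on $[0,1]^2$ to $\varphi_i$ on $\Lambda_{i,X}$ — is the main technical content of the proof.
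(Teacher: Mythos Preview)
Your plan is largely on the right track and matches the paper's argument in spirit --- oscillation bound, then case split according to whether $n-i$ exceeds $\beta_{i,X}$, with Proposition~\ref{p:psiprimedeviation} as the key input --- but Step~2 contains a real mistake. The control $\|\mathbf{Y}_{n-1}-\mathbf{I}\|<\varepsilon_4$ from $\{n<\tau_4\}$ says nothing useful about $F=\varphi_{i+1}\circ\cdots\circ\varphi_{n-1}$: $\mathbf{Y}_{n-1}$ is an $\cf_{n-1}$-measurable approximation to the derivative of the \emph{full} composition $\Phi_{n-1}$, not of the partial one $F$, and the comparison between $\Phi_{n}'$ and $\mathbf{Y}_n$ (Lemma~\ref{l:phitaulip}) is proved \emph{after} this proposition and relies on it. The correct source of bi-Lipschitz control on $F$ is $\{n<\tau_2\cap\tau_3\}$: each $|\delta(j,X)|\le 2\gamma^{-1}|\Delta_{j,X}|\le 2\gamma^{-1}\sqrt{\varepsilon_2}$, so by Proposition~\ref{p:lipschitz} each $\varphi_j$ is $(1+\varepsilon_5)$-bi-Lipschitz, and composition gives the \emph{exponential} constant $(1+\varepsilon_5)^{n-i}$ rather than a uniform one. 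This exponential factor must then be absorbed by choosing $\varepsilon_5$ (hence $\varepsilon_2$) small enough that $(1+\varepsilon_5)^{2(n-i)}2^{-(n-i-\beta_{i,X})/2}\le 2^{\beta_{i,X}/10}2^{-(n-i)/20}$; your Step~3 has enough slack for this, but you need to carry the factor through.

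On the organization: the paper collapses your Step~3 and your ``main obstacle'' into a single case. When $\beta_{i,X}<n-i$ it does not distinguish ``far from twists'' versus ``moderate distance''; instead it applies Proposition~\ref{p:psiprimedeviation} directly, bounding $g(F(x),F(x'))$ from below by $d(F(\Lambda_{n-1,X}),T_{i,X})\ge \tfrac14(1+\varepsilon_5)^{-(n-i)}2^{-(i+\beta_{i,X})/2}$ (using that $d(\Lambda_{n-1,X},T_{i,X})\ge \tfrac14 2^{-(i+\beta_{i,X})/2}$ once $J_{i,X}<n-i$, together with the bi-Lipschitz bound on $F$). The ratio $|x-x'|/g(x,x')$ in Proposition~\ref{p:psiprimedeviation} then gives exactly the factor $2^{-(n-i-\beta_{i,X})/2}$ you need, with no separate appeal to the second-derivative bound. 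The other case $\beta_{i,X}\ge n-i$ is handled by the crude bound $\|\mathbf{V}_{i,n}\|\le C|\Delta_{i,X}|$ exactly as in your Step~4.
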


\begin{proof}
Observe that,
\begin{equation}
\label{e:vinbound1}
||\mathbf{V}_{i,n}(X)|| \leq  \max_{x,x'\in \Lambda_{n-1,X}}
||\varphi_i'(\varphi_{i+1} \circ \ldots \circ \varphi_{n-1}(x))-\varphi_i'(\varphi_{i+1} \circ \ldots \circ \varphi_{n-1}(x'))||.
\end{equation}

Observe that, we have on  $\{n<\tau\}$, $\delta(n,X)\leq \frac{2}{\gamma}\Delta_{n,X}$. Set $C_5=\frac{2C_1}{\gamma}$. Now we need to consider two different cases.

{\bf Case 1:} $i+\beta_{i,X}<n$.
In this case, we have that the line joining $x,x'$ does not intersect $S_{i,X}$. Fix a constant $\varepsilon_{5}>0$ sufficiently small. Notice that we have by Proposition \ref{p:lipschitz} that if
$\varepsilon_{2}< \frac{\varepsilon_5}{100(C_5+1)}$, then on $\{n<\tau\}$,
$\varphi_{i+1} \circ \ldots \circ \varphi_{n-1}$ is a bi-Lipschitz continuous function on $\Lambda_{n-1,X}$ with Lipschitz constant at most $(1+\varepsilon_5)^{n-i}$.  Further observe the following. Since $i+\beta_{i,X}<n$, for any point $x\in \Lambda_{n-1,X}$, we have $d(x,T_{i,X})\geq \frac{1}{4}2^{-(i+\beta_{i,X})/2}$. By bi-Lipschitz continuity of $\varphi_{i+1} \circ \ldots \circ \varphi_{n-1}$ it follows that $d(\varphi_{i+1} \circ \ldots \circ \varphi_{n-1}(\Lambda_{n-1,X}),T_{i,X})\geq \frac{1}{4}(1+\varepsilon_{5})^{-(n-i)}2^{-(i+\beta_{i,X})/2}$.

Hence it follows from (\ref{e:vinbound1}) and Proposition \ref{p:psiprimedeviation} that on $\{n \leq \tau\}$ for some absolute constants $C$ and $\varepsilon_{5}$ sufficiently small

\begin{eqnarray}
\label{e:vinbound2}
||\mathbf{V}_{i,n}(X)|| &\leq &  C\Delta_{i,X}\dfrac{\max_{x,x'\in \Lambda_{n-1,X}}|x-x'||\varphi_{i+1} \circ \ldots \circ \varphi_{n-1}||_{lip}}{d(\varphi_{i+1} \circ \ldots \circ \varphi_{n-1}(\Lambda_{n-1,X}),T_{i,X})}\nonumber\\
&\leq & C \dfrac{2^{-n/2}(1+\varepsilon_5)^{n-i}}{(1+10\varepsilon_{4})^{-(n-i)}2^{-(i+\beta_{i,X})/2}}\nonumber\\
&\leq &  C\Delta_{i,X}(1+\varepsilon_5)^{2(n-i)}2^{-(n-i-\beta_{i,X})/2}\nonumber\\
&\leq & C\Delta_{i,X}2^{\beta_{i,X}/10}2^{-(n-i)/20}
\end{eqnarray}
where the final inequality follows by taking $\varepsilon_5$ sufficiently small.

{\bf Case 2:} $i+\beta_{i,X}\geq n$.

In this case it follows from Proposition \ref{p:psiprimedeviation} that we have on $\{i<\tau\}$
$$\max_{x,x'\in \Lambda_{i,X}}
||\varphi_i'(x)-\varphi_i'(x')||\leq C\Delta_{i,X}.$$
It follows now from (\ref{e:vinbound1}) that on $\{n<\tau\}$,

\begin{equation}
\label{e:vinbound3}
||\mathbf{V}_{i,n}(X)||\leq C\Delta_{i,X}\leq C\Delta_{i,X}2^{(n-i)/10}2^{-(n-i)/20}.
\end{equation}

The proposition now follows from (\ref{e:vinbound2}) and (\ref{e:vinbound3}) by choosing $C_4$ appropriately.
\end{proof}

We have a similar result for $\mathbf{U}_{i,n}$ where we get a $\tilde{\Delta}_{i,X}\Delta_{n,X}$ term instead of the $\Delta_{i,X}$ term in the above proposition. 

\begin{proposition}
\label{p:uin}
There exists some absolute constant $C_6>0$ such that for each $i\geq 1$, and $n>i$,  we have
\begin{equation}
\label{e:uinbound}
||\mathbf{U}_{i,n}(X)||1_{\{n<\tau\}}\leq C_6\tilde{\Delta}_{i,X,n}\Delta_{n,X}2^{-(n-i)/20}.
\end{equation}
\end{proposition}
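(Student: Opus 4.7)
The key idea is a change of variables that expresses $\mathbf{U}_{i,n}$ as the oscillation of $\varphi_i' \circ B$ over $\Lambda_{n,X}$ multiplied by a factor of size $O(\Delta_{n,X})$, where $B := \varphi_{i+1} \circ \cdots \circ \varphi_{n-1}$ (so $A := \varphi_{i+1} \circ \cdots \circ \varphi_n = B \circ \varphi_n$). Conditional on $\cf_n$, $X$ is uniform on $\Lambda_{n,X}$, and since $\varphi_n$ is a bijection of $\Lambda_{n,X}$ onto itself which is identity on $\partial \Lambda_{n,X}$, substituting $y = \varphi_n(x)$ in $\E[\varphi_i'(A(X)) \mid \cf_n]$ and then subtracting $\E[\varphi_i'(B(X))\mid \cf_n]$ gives
\begin{equation*}
\mathbf{U}_{i,n}(X) = \frac{1}{\lambda(\Lambda_{n,X})} \int_{\Lambda_{n,X}} \varphi_i'(B(y))\,[\rho(y) - 1]\,dy,
\end{equation*}
where $\rho(y) := |\det(\varphi_n^{-1})'(y)|$. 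Because $\varphi_n$ maps $\Lambda_{n,X}$ onto itself, $\int_{\Lambda_{n,X}} \rho\,dy = \lambda(\Lambda_{n,X}) = \int_{\Lambda_{n,X}} 1\,dy$, so $\int [\rho-1]\,dy = 0$ and any constant matrix may be subtracted from $\varphi_i'(B(y))$ inside the integral without changing it.

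Subtract $\varphi_i'(B(y_0))$ for a fixed $y_0 \in \Lambda_{n,X}$ and take absolute values: Proposition \ref{p:psideltaderivative} applied to $\varphi_n = \Psi_{\delta(n,X),\Lambda_{n,X},\ast}$ gives $\|\varphi_n'(y) - \mathbf{I}\| \leq C_1 \delta(n,X)$, and on $\{n<\tau_3\}$ we have $\delta(n,X) \leq \tfrac{2}{\gamma}\Delta_{n,X}$, whence $|\rho(y) - 1| \leq C\Delta_{n,X}$ pointwise. Therefore
\begin{equation*}
\|\mathbf{U}_{i,n}(X)\| \leq C\,\Delta_{n,X}\,\sup_{y \in \Lambda_{n,X}} \|\varphi_i'(B(y)) - \varphi_i'(B(y_0))\|.
\end{equation*}
The supremum on the right is the oscillation of $\varphi_i' \circ B$ over $\Lambda_{n,X} \subseteq \Lambda_{n-1,X}$, which is exactly the quantity controlled in the proof of Proposition \ref{p:vin} (see inequality (\ref{e:vinbound1}) and the subsequent two-case analysis) by $C_4 \tilde\Delta_{i,X,n} 2^{-(n-i)/20}$ on $\{n<\tau\}$. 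Combining the two estimates with $C_6 := C \cdot C_4$ proves the claim.

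The main subtlety is that a direct pointwise application of Proposition \ref{p:psiprimedeviation} to the pair $(A(X), B(X))$ does not always produce a $\Delta_{n,X}$ factor: when the line segment $[A,B]$ crosses a seam $S_{i,X}$, the indicator term $C_3\delta(i,X)\mathbf{1}_{S_{A,B}}$ has size $\Delta_{i,X}$ with no help from $\Delta_{n,X}$, and this issue is genuine precisely in Case 2 of Proposition \ref{p:vin}, where the target bound in the current proposition is tightest. The change-of-variables argument sidesteps this obstruction by exploiting that $\mathbf{U}_{i,n}$ is a conditional average rather than a pointwise quantity: the required $\Delta_{n,X}$ factor is extracted cleanly from the Jacobian deviation $\rho-1$, while all the delicate geometric case analysis is absorbed into the oscillation bound already established for $\mathbf{V}_{i,n}$.
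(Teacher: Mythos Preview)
Your change-of-variables argument is correct and is genuinely simpler than the paper's route. After the substitution $y=\varphi_n(x)$ and the observation $\int_{\Lambda_{n,X}}(\rho-1)\,dy=0$, the only two ingredients needed are (i) the pointwise bound $|\rho(y)-1|\leq C|\Delta_{n,X}|$, which follows immediately from Lemma~\ref{l:psideltaarea} (the Jacobian of $\Psi_{\delta}$ is exactly $1\pm\delta$) together with $|\delta(n,X)|\leq \tfrac{2}{\gamma}|\Delta_{n,X}|$ on $\{n<\tau\}$, and (ii) the oscillation bound for $\varphi_i'\circ B$ over $\Lambda_{n,X}\subseteq\Lambda_{n-1,X}$, which is precisely what the two-case analysis following \eqref{e:vinbound1} in Proposition~\ref{p:vin} establishes (not merely a bound on $\mathbf{V}_{i,n}$ itself, but on the right-hand side of \eqref{e:vinbound1}).

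The paper instead works pointwise with Proposition~\ref{p:psiprimedeviation} applied to the pair $\bigl(B(\varphi_n(x)),B(x)\bigr)$, and must then separately control the set of $x$ for which the segment between these two images crosses a seam $S_{i,X}$; this is handled by the auxiliary Lemmas~\ref{l:claim2} and~\ref{l:claim1}, which bound respectively the measure of the seam-crossing set and the displacement-to-twist-distance ratio. Your approach avoids this entire seam-crossing analysis: by passing to an integral identity you extract the $\Delta_{n,X}$ factor from the Jacobian deviation $\rho-1$ rather than from the pointwise displacement $|\varphi_n(x)-x|$, so the seam indicator in Proposition~\ref{p:psiprimedeviation} never enters. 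The trade-off is that your argument uses the specific area-preserving structure of $\Psi_{\delta}$ (Lemma~\ref{l:psideltaarea}) and the fact that $\mathbf{U}_{i,n}$ is an average, whereas the paper's argument, being pointwise, would in principle also control $\varphi_i'(A(x))-\varphi_i'(B(x))$ for individual $x$ away from the bad set $\mathcal{A}_{i,n,X}$.
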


To prove Proposition \ref{p:uin} we need some additional work to deal with the possibility that $\varphi_{i+1}\circ \cdots \varphi_{n-1}(\Lambda_{n-1,X})$ might intersect $S_{i,X}$. To this end we make the following definition.  For $x\in \Lambda_{n,X}$, let $A_{i,n,x}$ denote the event that the line segment joining $\varphi_{i+1} \circ \ldots \circ \varphi_{n-1}(x)$ and $\varphi_{i+1} \circ \ldots \circ \varphi_{n}(x)$ intersects $S_{i,X}$. Let
$$\mathcal{A}_{i,n,X}=\{x\in\Lambda_{n,X}: 1_{\{A_{i,n,x}\}}>0\}.$$

We have the following lemma bounding the measure of the set $\mathcal{A}_{i,n,X}$.
\begin{lemma}
\label{l:claim2}
For some absolute constant $C>0$, we have on $\{n<\tau\}$

\begin{equation}
\label{e:uinbound5}
\lambda(\mathcal{A}_{i,n,X})\leq C\Delta_{n,X}2^{(n-i)/20}2^{-n}.
\end{equation}
\end{lemma}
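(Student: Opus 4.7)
The plan is to show that $x \in \mathcal{A}_{i,n,X}$ forces $\varphi_{i+1}\circ\cdots\circ\varphi_{n-1}(x)$ to lie in a narrow tubular neighborhood of $S_{i,X}$, estimate the area of that neighborhood inside the image of $\Lambda_{n,X}$ by a ``length times width'' calculation, and then pull the estimate back through the bi-Lipschitz map $\varphi_{i+1}\circ\cdots\circ\varphi_{n-1}$.

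First, I bound the endpoints of the relevant segment. On $\{n<\tau\}$ the stopping times $\tau_2,\tau_3$ guarantee $|\delta(n,X)|\leq (2/\gamma)|\Delta_{n,X}|$ and $|\delta(j,X)|\leq C\sqrt{\varepsilon_2}$ for $i<j<n$. By choosing $\varepsilon_2$ small enough, Proposition \ref{p:lipschitz} makes each intermediate $\varphi_j$ bi-Lipschitz with constant $(1+\varepsilon_5)$ for a pre-chosen small $\varepsilon_5>0$. Combining Proposition \ref{p:distance} with the rescaling from $[0,1]^2$ to $\Lambda_{n,X}$ (whose diameter is $O(2^{-n/2})$) gives
\[
|\varphi_n(x)-x|\leq C\,\Delta_{n,X}\,2^{-n/2},\qquad x\in\Lambda_{n,X}.
\]
Applying the composition, which is bi-Lipschitz with constant $(1+\varepsilon_5)^{n-i}$ on $\Lambda_{n-1,X}$, the length of the segment from $\varphi_{i+1}\circ\cdots\circ\varphi_{n-1}(x)$ to $\varphi_{i+1}\circ\cdots\circ\varphi_n(x)$ is at most
\[
L := C\,\Delta_{n,X}\,2^{-n/2}(1+\varepsilon_5)^{n-i}.
\]

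Now the key geometric step. Write $\Phi := \varphi_{i+1}\circ\cdots\circ\varphi_{n-1}$ and $B := \Phi(\Lambda_{n,X})$. If $x\in\mathcal{A}_{i,n,X}$ then $\Phi(x)\in B$ lies within distance $L$ of $S_{i,X}$, so $\mathcal{A}_{i,n,X}\subseteq \Phi^{-1}\bigl(\{y:d(y,S_{i,X})\leq L\}\cap B\bigr)$. The set $B$ has diameter at most $D := C\,2^{-n/2}(1+\varepsilon_5)^{n-i}$, and $S_{i,X}$ is (a rescaled copy of) the fixed smooth curve $S\subset[0,1]^2$, so its arc length inside any ball of radius $R$ is at most $CR$. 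Hence the portion of $S_{i,X}$ relevant to $B$ has length at most $C(L+D)\leq CD$ (using $L\leq D$, which holds since $\Delta_{n,X}=O(1)$), and so
\[
\lambda\bigl(\{y:d(y,S_{i,X})\leq L\}\cap B\bigr)\leq C\,L\,D = C\,\Delta_{n,X}\,2^{-n}(1+\varepsilon_5)^{2(n-i)}.
\]

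Finally, because $\Phi^{-1}$ is Lipschitz with constant $(1+\varepsilon_5)^{n-i}$ on $B$, its Jacobian is bounded by $(1+\varepsilon_5)^{2(n-i)}$ and therefore
\[
\lambda(\mathcal{A}_{i,n,X})\leq C\,\Delta_{n,X}\,2^{-n}(1+\varepsilon_5)^{4(n-i)}\leq C\,\Delta_{n,X}\,2^{(n-i)/20}\,2^{-n},
\]
provided $\varepsilon_5$ was chosen so small that $(1+\varepsilon_5)^4\leq 2^{1/20}$, which is enforced by the upstream choice of $\varepsilon_2$. The main obstacle is the geometric bookkeeping in the middle paragraph: a naive estimate using the full length $\sim 2^{-i/2}$ of $S_{i,X}$ would lose a factor $2^{(n-i)/2}$ and be far too weak, so one must exploit that only the small piece of $S_{i,X}$ lying in the $(L+D)$-neighborhood of $B$ (whose diameter is $\sim 2^{-n/2}(1+\varepsilon_5)^{n-i}$, not $\sim 2^{-i/2}$) is actually relevant.
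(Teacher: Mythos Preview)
Your argument is correct and follows essentially the same route as the paper: bound the displacement $|\varphi_n(x)-x|$ by $C\Delta_{n,X}2^{-n/2}$, push it through the $(1+\varepsilon_5)^{n-i}$-bi-Lipschitz composition to get the tube width $L$, estimate the length of the portion of $S_{i,X}$ meeting the image $\Phi(\Lambda_{n,X})$ by its diameter $D\sim (1+\varepsilon_5)^{n-i}2^{-n/2}$, multiply length by width, and pull back. The paper phrases the length step via ``first/last entry points'' of the seam in $\Phi(\Lambda_{n,X})$ rather than your ball-of-radius-$D$ argument, but the content is the same; your closing remark about why one must not use the full length $\sim 2^{-i/2}$ of $S_{i,X}$ is exactly the point.
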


\begin{proof}
Denote the two seams in $\Lambda_{i,X}$ by $S^1_{i,X}$ and $S^2_{i,X}$ respectively. For $x\in \Lambda_{n,X}$, let $A^1_{i,n,x}$ denote the event that the line segment joining $\varphi_{i+1} \circ \ldots \circ \varphi_{n-1}(x)$ and $\varphi_{i+1} \circ \ldots \circ \varphi_{n}(x)$ intersects $S^1_{i,X}$. Let

$$\mathcal{A}^1_{i,n,X}=\{x\in\Lambda_{n,X}: 1_{\{A_{i,n,x}\}}>0\}.$$

By symmetry, it suffices to prove that for some absolute constant $C>0$, we have on $\{n<\tau\}$

\begin{equation}
\label{e:uinbound6}
\lambda(\mathcal{A}^1_{i,n,X})\leq C\Delta_{n,X}2^{(n-i)/20}2^{-n}.
\end{equation}

Interpreting $S^1_{i,X}$ as a directed simple curve there exist a \emph{first point} $y\in S^1_{i,X}$ where $S^1_{i,X}$ enters $\varphi_{i+1} \circ \ldots \circ \varphi_{n-1}(\Lambda_{n,X})$ and a \emph{last point} $y'\in S^1_{i,X}$ where $S^1_{i,X}$ exits $\varphi_{i+1} \circ \ldots \circ \varphi_{n-1}(\Lambda_{n,X})$. If $\varepsilon_5$ is such that the bi-Lipschitz constant of $\varphi_{i+1} \circ \ldots \circ \varphi_{n-1}$ is at most $(1+\varepsilon_5)^{n-i}$ then we get that $|y-y'|\leq C(1+\varepsilon_5)^{(n-i)}2^{-n/2}$ for some absolute constant $C>0$. Let $S^1_{i,X}(y,y')$ denote the curve segment $S^1_{i,X}$ from $y$ to $y'$. Let $\ell(y,y')$ denote the length $S^1_{i,X}$ from $y$ to $y'$. It then follows that $\ell(y,y')\leq C(1+\varepsilon_5)^{(n-i)}2^{-n/2}$ for some absolute constant $C>0$. Now define

$$A_{y,y',C'}=\{x\in \Lambda_{i,X}:\exists z\in S^1_{i,X}(y,y') ~\text{such that} |x-z|\leq C'\Delta_{n,X}(1+\varepsilon_5)^{(n-i)}2^{-n/2}\}.$$

Clearly $x\in \mathcal{A}^1_{i,n,X}$ implies $\varphi_{i+1} \circ \ldots \circ \varphi_{n-1}(x)\in A_{y,y',C'}$ if $|\varphi_n(x)-x|\leq C'\Delta_{n,X}2^{-n/2}$. It follows that

$$\lambda(\mathcal{A}^1_{i,n,X})\leq (1+\varepsilon_5)^{(n-i)}\lambda(A_{y,y',C'}).$$

Clearly for some constant $C>0$,
$$\lambda(A_{y,y',C'})\leq C\ell(y,y')\Delta_{n,X}(1+\varepsilon_5)^{(n-i)}2^{-n/2}\leq C\Delta_{n,X}(1+\varepsilon_5)^{2(n-i)}2^{-n}$$

and hence we have

$$\lambda(\mathcal{A}^1_{i,n,X})\leq C\Delta_{n,X}(1+\varepsilon_5)^{3(n-i)}2^{-n}.$$

By taking $\varepsilon_5$ sufficiently small we establish (\ref{e:uinbound6}) and the lemma is proved.
\end{proof}

We also need the following lemma.

\begin{lemma}
\label{l:claim1}
For some absolute constant $C>0$, we have for each $x\in \Lambda_{n,x}$, on $\{n<\tau\}$

\begin{equation}
\label{e:uinbound4}
\dfrac{|\varphi_{i+1} \circ \ldots \circ \varphi_{n}(x)-\varphi_{i+1} \circ \ldots \circ \varphi_{n-1}(x)|}{d(\varphi_{i+1} \circ \ldots \circ \varphi_{n}(x),T_{i,X})}\leq C\Delta_{n,X}2^{(n-i)/20}.
\end{equation}
\end{lemma}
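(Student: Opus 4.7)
The plan is to reduce the lemma's ratio to a single-step estimate comparing $x$ and $\varphi_n(x)$ by exploiting the bi-Lipschitz control available on $\{n<\tau\}$, and then to verify that single-step estimate through a geometric case analysis based on how close $\Lambda_{n,X}$ is to a twist of $\Lambda_{i,X}$.

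First, I will use that on $\{n<\tau\}$ one has $\delta(j,X)\leq \tfrac{2}{\gamma}|\Delta_{j,X}|\leq \tfrac{2}{\gamma}\sqrt{\varepsilon_2}$ for every $j\leq n$. By choosing $\varepsilon_2$ small enough (as in the proof of Proposition \ref{p:vin}), Proposition \ref{p:lipschitz} then guarantees that each $\varphi_j$ with $j\leq n$, and hence the composition $F:=\varphi_{i+1}\circ\cdots\circ\varphi_{n-1}$, is bi-Lipschitz with constant $(1+\varepsilon_5)^{n-i-1}$ for an $\varepsilon_5>0$ chosen to satisfy $(1+\varepsilon_5)^{2}\leq 2^{1/20}$. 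Next, I will observe that the two points comprising $T_{i,X}$ lie on $\partial\Lambda_{i,X}$ and, by the dyadic construction, are corners of every refining sub-box that contains them; consequently each $\varphi_k$ with $k>i$ fixes $T_{i,X}$ pointwise and so $F(T_{i,X})=T_{i,X}$. Bounding the numerator by the Lipschitz constant of $F$ on $\Lambda_{n-1,X}$ (which contains both $x$ and $\varphi_n(x)$) and the denominator from below using the bi-Lipschitz inequality together with $F(T_{i,X})=T_{i,X}$, the ratio in the lemma is at most
$$(1+\varepsilon_5)^{2(n-i-1)}\;\frac{|\varphi_n(x)-x|}{d(\varphi_n(x),T_{i,X})},$$
so it suffices to show $|\varphi_n(x)-x|/d(\varphi_n(x),T_{i,X})\leq C\Delta_{n,X}$.

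For the numerator, a rescaled version of Proposition \ref{p:distance} applied to $\Lambda_{n,X}$ yields $|\varphi_n(x)-x|\leq C\delta(n,X)\,\tilde{d}(x)\leq C'\Delta_{n,X}\,\tilde{d}(x)$, where $\tilde{d}(x)$ denotes the distance from $x$ to the nearest corner of $\Lambda_{n,X}$. For the denominator I split into two cases. If some twist $t\in T_{i,X}$ lies in $\Lambda_{n,X}$ then $t$ is itself a corner of $\Lambda_{n,X}$, so $\tilde{d}(x)\leq|x-t|$ and hence $d(\varphi_n(x),t)\geq|x-t|(1-C\Delta_{n,X})\geq\tfrac{1}{2}|x-t|$ once $\Delta_{n,X}$ is small enough, which yields the ratio bound $\leq 2C\Delta_{n,X}$. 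Otherwise no twist lies in $\Lambda_{n,X}$, and the dyadic geometry forces $d(\Lambda_{n,X},T_{i,X})\gtrsim 2^{-n/2}$ (for instance, if a twist is in $\Lambda_{n-1,X}$ but not $\Lambda_{n,X}$, then $\Lambda_{n,X}$ lies in the opposite sub-box of $\Lambda_{n-1,X}$ and is separated from the twist by at least half the shorter side of $\Lambda_{n-1,X}$; the separation is even larger if no twist lies in $\Lambda_{n-1,X}$), while $\tilde{d}(x)\leq\mathrm{diam}(\Lambda_{n,X})\lesssim 2^{-n/2}$, giving the same ratio bound.

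The main obstacle is handling the regime where $\Lambda_{n,X}$ is very close to a twist of $\Lambda_{i,X}$, since there both numerator and denominator degenerate. The key geometric fact that makes this tractable is that whenever a twist is in $\Lambda_{n,X}$ it coincides with a corner of $\Lambda_{n,X}$, so that the same quantity $|x-t|$ controls both sides via Proposition \ref{p:distance}, producing the needed cancellation. Combining the prefactor $(1+\varepsilon_5)^{2(n-i-1)}\leq 2^{(n-i)/20}$ with the single-step bound $\leq C\Delta_{n,X}$ then completes the proof.
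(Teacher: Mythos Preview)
Your proof is correct and follows essentially the same route as the paper's: both reduce the ratio to $|\varphi_n(x)-x|/d(\varphi_n(x),T_{i,X})$ via the bi-Lipschitz control on $\varphi_{i+1}\circ\cdots\circ\varphi_{n-1}$ and the fact that this composition fixes $T_{i,X}$, and then use Proposition~\ref{p:distance} to get $|\varphi_n(x)-x|\leq C\Delta_{n,X}\,d(x,T_{i,X})$. The paper simply asserts these two ingredients (that the composition fixes the twists, and that the distance from $x$ to the corners of $\Lambda_{n,X}$ is controlled by $d(x,T_{i,X})$), whereas you spell them out via the observation that twists become corners of all finer dyadic boxes containing them and the two-case geometric analysis; your version is a more detailed execution of the same argument.
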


\begin{proof}
It follows from Proposition \ref{p:distance} that for some absolute constant $C$ for all $x\in \Lambda_{n,X}$ we have $|\varphi_n(x)-x|\leq C\Delta_{n,X}d(x,T_{i,X})$. Now for $\varepsilon_{5}$ as in the previous case, we have $$|\varphi_{i+1} \circ \ldots \circ \varphi_{n}(x)-\varphi_{i+1} \circ \ldots \circ \varphi_{n-1}(x)|\leq (1+\varepsilon_5)^{(n-i)}d(x,T_{i,X})$$ and also
$$d(\varphi_{i+1} \circ \ldots \circ \varphi_{n}(x),T_{i,X})\geq (1+\varepsilon_{5})^{-(n-i)}d(x,T_{i,X}).$$
Taking $\varepsilon_{5}$ sufficiently small, (\ref{e:uinbound4}) follows from the above two equations.
\end{proof}

Now we are ready to prove Proposition \ref{p:uin}.

\begin{proof}[Proof of Proposition \ref{p:uin}]
For the proof of this proposition also we need to consider two cases.

{\bf Case 1:} $i+\beta_{i,X}<n$.

In this case, we have that $\varphi_{i+1}\circ \cdots \varphi_{n-1}(\Lambda_{n-1,X})$ does not intersect $S_{i,X}$. Hence it follows that by arguments similar to those in the proof of Propositiion \ref{p:vin} that on $\{n<\tau\}$ we have using Proposition \ref{p:psiprimedeviation}

\begin{equation}
\label{e:uinbound1}
||\mathbf{U}_{i,n}(X)|| \leq  \frac{2C_3}{\gamma}\Delta_{i,X} \max_{x\in \Lambda_{n,X}}\dfrac{|\varphi_{i+1} \circ \ldots \circ \varphi_{n}(x)-\varphi_{i+1} \circ \ldots \circ \varphi_{n-1}(x)|}{d(\varphi_{i+1} \circ \ldots \circ \varphi_{n}(x),T_{i,X})}.
\end{equation}

Now observe that since $i+\beta_{i,X}<n$, we have that $d(\Lambda_{n,X},T_{i,X})\geq \frac{1}{4}2^{-(i+\beta_{i,X})/2}$. Choosing $\varepsilon_5$ and $\varepsilon_2$ as in the proof of Proposition \ref{p:vin} such that the bi-Lipschitz constant of $\varphi_{i+1} \circ \ldots \circ \varphi_{n}$ is at most $(1+\varepsilon_5)^{n-i}$ we get that $d(\varphi_{i+1} \circ \ldots \circ \varphi_{n}(\Lambda_{n,X}),T_{i,X})\geq \frac{1}{4}(1+\varepsilon_5)^{-(n-i)}2^{-(i+\beta_{i,X})/2}$. Also observe that it follows from Proposition \ref{p:distance} that for some absolute constant $C>0$ we have

$$\max_{x\in \Lambda_{n,X}}|\varphi_{n}(x)-x|\leq  C\Delta_{n,X}2^{-n/2}$$

It follows now from (\ref{e:uinbound1}) that on $\{n<\tau\}$ for some absolute constant $C$
\begin{eqnarray}
\label{e:uinbound2}
||\mathbf{U}_{i,n}(X)|| &\leq & C\Delta_{i,X} \dfrac{||\varphi_{i+1} \circ \ldots \circ \varphi_{n-1}||_{lip} C\Delta_{n,X}}{(1+\varepsilon_5)^{-(n-i)}2^{-(i+\beta_{i,X})/2}}\nonumber\\
&\leq & C\Delta_{i,X}\Delta_{n,X}(1+\varepsilon_5)^{2(n-i)}2^{-(n-i-\beta_{i,X})/2}\nonumber\\
&\leq & C\Delta_{i,X}\Delta_{n,X}2^{\beta_{i,X}/10}2^{-(n-i)/20}.
\end{eqnarray}

{\bf Case 2:} $i+\beta_{i,X}\geq n$.
It follows from Proposition \ref{p:psiprimedeviation} that

\begin{equation}
\label{e:uinbound3}
||\mathbf{U}_{i,n}(X)|| \leq  C\Delta_{i,X}\left(2^{n}\lambda(\mathcal{A}_{i,n,X})+\max_{x\in \Lambda_{n,X}\setminus \mathcal{A}_{i,n,X}}\dfrac{|\varphi_{i+1} \circ \ldots \circ \varphi_{n}(x)-\varphi_{i+1} \circ \ldots \circ \varphi_{n-1}(x)|}{d(\varphi_{i+1} \circ \ldots \circ \varphi_{n}(x),T_{i,X})}\right).
\end{equation}

Using (\ref{e:uinbound3}), Lemma \ref{l:claim2} and Lemma \ref{l:claim1} we get for some absolute constant $C>0$
$$||\mathbf{U}_{i,n}(X)||\leq C\Delta_{i,X}\Delta_{n,X}2^{(n-i)/20} \leq C\Delta_{i,X}\Delta_{n,X}2^{(n-i)/10}2^{-(n-i)/20}\leq C\tilde{\Delta}_{i,X,n}\Delta_{n,X}2^{-(n-i)/20}.$$

Proof of the proposition is completed by choosing $C_6$ appropriately.
\end{proof}

\section{Bounding $\mathbf{Y}_{n}$}
\label{s:esty}
Our next step is to prove Theorem \ref{t:stoptime}. That is, we need to show that it is unlikely that $||\mathbf{Y}_n-{\rm{\mathbf{I}}}||$ becomes large before either  $\rho_{n,X}$ deviates significantly from $\lambda(A)$, $\Delta_{n,X}$ becomes sufficiently large or $\sum_{k\leq n} \tilde{\Delta}_{k,X,n}^2$ becomes sufficiently large. For this purpose we construct a matrix-valued martingale $\mathbf{M}_n$.

\subsection{Constructing $\mathbf{M}_n$}
Define a sequence $\{\mathbf{M}_n\}_{n\geq 1}$ of matrix-valued random objects as follows.

\begin{enumerate}
\item Set $\mathbf{M}_1={\rm \mathbf{I}}$.
\item For $n\geq 1$, set
\begin{equation}
\label{e:mndifference}
\mathbf{M}_{n+1}-\mathbf{M}_{n}=\sum_{k=1}^{n}\prod_{i=1}^{k}\left({\rm \mathbf{I}}+\sum_{j=i+1}^{n}\mathbf{W}_{i,j}\right)\mathbf{V}_{k,n+1}\prod_{i=k+1}^{n}\left({\rm\mathbf{I}}+
\sum_{j=i+1}^{n}\mathbf{W}_{i,j}\right).
\end{equation}
\end{enumerate}

Clearly, it follows that $\mathbf{M}_n$ is $\cf_n$ measurable and since $\E[\mathbf{V}_{k,n+1}\mid \cf_n]=0$ it follows that $\mathbf{M}_n$ is a martingale with respect to the filtration $\{\cf_n\}$.

Let $\tau_1$, $\tau_2$, $\tau_3$ be defined by (\ref{e:tau1}), (\ref{e:tau2}) and (\ref{e:tau3}) respectively. Define the stopping times $\tau_5$ and $\tau_6$ by

\begin{equation}
\label{e:tau5}
\tau_5=\{\inf n: ||Y_n||\geq 2\}.
\end{equation}

\begin{equation}
\label{e:tau6}
\tau_6=\tau_6(\varepsilon_6)=\{\inf n: ||\mathbf{M}_n-{\rm \mathbf{I}}||\geq \varepsilon_6 \}.
\end{equation}

Let us define

$$\tau'= \tau_1\wedge \tau_2\wedge \tau_3\wedge \tau_5$$

Observe that $\tau\leq \tau'$.

We shall prove the following theorem.

\begin{theorem}
\label{t:mn}
Set $\varepsilon_6=\eta/800$. Then there exists $\varepsilon_1$, $\varepsilon_2>0$ such that we have
\begin{equation}
\label{e:mntheorem}
\P[\tau_6< \tau']\leq \frac{1}{5}.
\end{equation}
\end{theorem}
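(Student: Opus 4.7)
The plan is to apply a Doob-type $L^2$ maximal inequality to the martingale $\{\mathbf{M}_n\}$. Since the partial products appearing in (\ref{e:mndifference}) are $\cf_n$-measurable and $\E[\mathbf{V}_{k,n+1}\mid\cf_n]=0$ by construction, each entry of $\mathbf{M}_n-\mathbf{I}$ is a scalar martingale; Doob's $L^2$ inequality applied entry-wise and union bounded over the four entries reduces the claim to
$$\P[\tau_6<\tau']\leq C\,\varepsilon_6^{-2}\sum_{n\geq 1}\E\bigl[\|\mathbf{M}_{n+1}-\mathbf{M}_n\|^2\,\mathbf{1}_{n<\tau'}\bigr],$$
so it suffices to show the right-hand sum is $O(\varepsilon_1+\varepsilon_2)$, after which one chooses $\varepsilon_1$ and $\varepsilon_2$ small depending on $\varepsilon_6=\eta/800$.

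For a single increment, write $\mathbf{M}_{n+1}-\mathbf{M}_n=\sum_{k=1}^{n}\mathbf{L}_{k,n}\mathbf{V}_{k,n+1}\mathbf{R}_{k,n}$ with $\mathbf{L}_{k,n}=\prod_{i=1}^{k}(\mathbf{I}+\sum_{j=i+1}^n\mathbf{W}_{i,j})$ and $\mathbf{R}_{k,n}=\prod_{i=k+1}^{n}(\mathbf{I}+\sum_{j=i+1}^n\mathbf{W}_{i,j})$, so that $\mathbf{L}_{k,n}\mathbf{R}_{k,n}=\mathbf{Y}_n$. On $\{n<\tau_5\}$ one has $\|\mathbf{Y}_n\|\leq 2$, while Propositions \ref{p:vin} and \ref{p:uin} combined with the geometric decay $2^{-(j-i)/20}$ show that each factor $\mathbf{I}+\sum_j\mathbf{W}_{i,j}$ stays close enough to the identity on $\{n<\tau'\}$ to yield uniform bounds $\|\mathbf{R}_{k,n}\|,\|\mathbf{R}_{k,n}^{-1}\|\leq C$; then $\|\mathbf{L}_{k,n}\|\leq\|\mathbf{Y}_n\|\|\mathbf{R}_{k,n}^{-1}\|\leq C$ and hence $\|\mathbf{L}_{k,n}\|\cdot\|\mathbf{R}_{k,n}\|\leq C$. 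Substituting the bound $\|\mathbf{V}_{k,n+1}\|\leq C\tilde\Delta_{k,X,n+1}2^{-(n+1-k)/20}$ from Proposition \ref{p:vin} and applying Cauchy--Schwarz against the geometric weights gives
$$\|\mathbf{M}_{n+1}-\mathbf{M}_n\|^2\leq C\sum_{k=1}^n\tilde\Delta_{k,X,n+1}^2\,2^{-(n+1-k)/20}\qquad\text{on }\{n<\tau'\}.$$

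Summing over $n$ and exchanging the order of summation,
$$\sum_n\E\!\left[\|\mathbf{M}_{n+1}-\mathbf{M}_n\|^2\mathbf{1}_{n<\tau'}\right]\leq C\,\E\!\left[\sum_{k<\tau'}\tilde\Delta_{k,X,\tau'}^2\right].$$
The inner sum is controlled by combining (\ref{e:tau1}), which gives $\sum_{k<n}\tilde\Delta_{k,X,n}^2\leq\varepsilon_1$ on $\{n<\tau_1\}$, with the one-step growth estimate $\tilde\Delta_{k,X,n+1}^2\leq 2^{1/5}\tilde\Delta_{k,X,n}^2$ and with (\ref{e:tau2}), which bounds the new term $\tilde\Delta_{\tau'-1,X,\tau'}^2\leq C\varepsilon_2$ at the stopping step. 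This gives $\sum_{k<\tau'}\tilde\Delta_{k,X,\tau'}^2\leq C(\varepsilon_1+\varepsilon_2)$, completing the estimate.

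The step I expect to be the main obstacle is the uniform control of the partial products $\mathbf{L}_{k,n}$ and $\mathbf{R}_{k,n}$: since $\tau_5$ controls only the full product $\mathbf{Y}_n$, one must argue separately that each factor $\mathbf{I}+\sum_j\mathbf{W}_{i,j}$ is sufficiently close to the identity on $\{n<\tau'\}$ to be individually bounded and invertible with bounded inverse, so that the bound on $\|\mathbf{Y}_n\|$ can be transferred to each half. This rests on a careful combination of Propositions \ref{p:vin} and \ref{p:uin} with the $\ell^2$ control of $\tilde\Delta_{k,X,n}$ coming from $\tau_1$, and in particular on the fact that the geometric weight $2^{-(j-i)/20}$ is summable against the $\ell^2$ bound.
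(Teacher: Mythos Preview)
Your overall strategy is sound and, once one imprecision is fixed, the argument goes through. The increment analysis---bounding $\|\mathbf{M}_{n+1}-\mathbf{M}_n\|^2$ via Proposition~\ref{p:vin}, Cauchy--Schwarz against the geometric weight, and then summing over $n$---is exactly what the paper does in (\ref{e:mndiffbound1}) and (\ref{e:mndiffbound2}). The genuine difference is that the paper then invokes Freedman's exponential martingale inequality (Theorem~\ref{t:fried}), which in addition requires a uniform bound on the increment size $\|\mathbf{M}_{n+1}-\mathbf{M}_n\|$; this is (\ref{e:mnabsdifference}). Your use of Doob's $L^2$ maximal inequality sidesteps that second estimate entirely and is therefore a little shorter. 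Since the conclusion only asks for probability at most $1/5$, polynomial concentration is enough and the exponential tail the paper obtains is more than needed for this particular statement.

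One point needs correction: the uniform control of the partial products $\mathbf{L}_{k,n},\mathbf{R}_{k,n}$. You claim that Propositions~\ref{p:vin} and~\ref{p:uin}, together with the $\ell^2$ control of $\tilde\Delta_{i,X,n}$ coming from $\tau_1$, force $\|\mathbf{R}_{k,n}\|,\|\mathbf{R}_{k,n}^{-1}\|\leq C$ uniformly. This does not follow: the product of $n-k$ factors each within $O(\tilde\Delta_{i,X,n})$ of $\mathbf{I}$ is only bounded by $\exp\bigl(C\sum_{i=k+1}^n\tilde\Delta_{i,X,n}\bigr)$, and you have no $\ell^1$ bound on the $\tilde\Delta$'s, only $\ell^2$. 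The paper's route is different and more direct: each factor $\mathbf{I}+\sum_j\mathbf{W}_{i,j}=\E[\varphi_i'(\cdots)\mid\cf_n]$ is bounded in norm by $1+\varepsilon_5$ via Proposition~\ref{p:psideltaderivative}, using that $|\delta(i,X)|\leq C|\Delta_{i,X}|\leq C\sqrt{\varepsilon_2}$ on $\{i<\tau_2\wedge\tau_3\}$. This gives $\|\mathbf{R}_{k,n}\|,\|\mathbf{R}_{k,n}^{-1}\|\leq(1+\varepsilon_5)^{n-k}$, which is \emph{not} uniformly bounded, but is then absorbed into the geometric decay $2^{-(n+1-k)/20}$ of $\|\mathbf{V}_{k,n+1}\|$ by choosing $\varepsilon_5$ (hence $\varepsilon_2$) small; combined with $\|\mathbf{Y}_n\|\leq 2$ on $\{n<\tau_5\}$ this gives $\|\mathbf{L}_{k,n}\|\cdot\|\mathbf{R}_{k,n}\|\leq 2(1+\varepsilon_5)^{2(n-k)}$. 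With that adjustment the rest of your computation goes through unchanged.
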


Before proving Theorem \ref{t:mn}, observe the following. It follows from (\ref{e:ysumprod}) that

\begin{equation}
\label{e:mndiff}
\mathbf{M}_{n+1}-\mathbf{M}_{n}=\sum_{k=1}^{n}\mathbf{Y}_n\left(\prod_{i=k+1}^{k}\left({\rm\mathbf{I}}
+\sum_{j=i+1}^{n}\mathbf{W}_{i,j}\right)\right)^{-1}\mathbf{V}_{k,n+1}\prod_{i=k+1}^{n}\left({\rm \mathbf{I}}+\sum_{j=i+1}^{n}\mathbf{W}_{i,j}\right).
\end{equation}

Choose $\varepsilon_5$ sufficiently small and set $\varepsilon_2=\frac{\varepsilon_5}{100(C_5+1)}$. By choosing $\varepsilon_5$ sufficiently small we have for each $i\in [k+1, n]$ we have $||{\rm \mathbf{I}}+\sum_{j=i+1}^{n}\mathbf{W}_{i,j}||\leq (1+\varepsilon_5)$, $||\left({\rm \mathbf{I}}+\sum_{j=i+1}^{n}\mathbf{W}_{i,j}\right)^{-1}||\leq (1+\varepsilon_5)$ using Proposition \ref{p:psideltaderivative}.
It follows now using Proposition \ref{p:vin} that on $\{n< \tau'\}$ we have for $\varepsilon_5$ small enough and for some absolute constant $C>0$
\begin{eqnarray}
\label{e:mndiffbound1}
||\mathbf{M}_{n+1}-\mathbf{M}_{n}||^2 & \leq & C||\mathbf{Y}_n||^2\left(\sum_{k=1}^{n} 2^{-(n+1-k)/20}(1+\varepsilon_5)^{2(n+1-k)}\tilde{\Delta}_{k,X,n}\right)^2\nonumber\\
&\leq & C||\mathbf{Y}_n||^2 \left(\sum_{k=1}^{n} 2^{-(n+1-k)/50}\tilde{\Delta}_{k,X,n}\right)^2\nonumber\\
&\leq & C||\mathbf{Y}_n||^2 \left(\sum_{k=1}^{n} 2^{-(n+1-k)/100}\right)\left(\sum_{k=1}^{n} 2^{-(n+1-k)/100}\tilde{\Delta}_{k,X,n}^2\right)\nonumber\\
&\leq & C\sum_{k=1}^{n} 2^{-(n+1-k)/100}\tilde{\Delta}_{k,X,n}^2.
\end{eqnarray}
where the third inequality is by the Cauchy-Schwartz Inequality.
Hence on the event $\{n+1<\tau'\}$
we have
\begin{eqnarray}
\label{e:mndiffbound2}
\sum_{i=1}^{n} \E[||\mathbf{M}_{i+1}-\mathbf{M}_{i}||^2\mid \cf_i] &\leq & C\sum_{i=1}^{n}\sum_{k=1}^{i}2^{-(i+1-k)/100}\tilde{\Delta}_{k,X}^2\nonumber\\
&\leq & C\sum_{i=1}^{n}\tilde{\Delta}_{i,X,n}^2 \leq C_{8}\varepsilon_{1}.
\end{eqnarray}
Similarly to (\ref{e:mndiffbound1}), we have that on the event $\{n <\tau'\}$ for some absolute constant $C>0$
\begin{eqnarray}
\label{e:mnabsdifference}
||\mathbf{M}_{n+1}-\mathbf{M}_{n}|| &\leq &  C||Y_n||\sum_{k=1}^{n}2^{-(n+1-k)/50}\tilde{\Delta}_{k,X,n}\nonumber\\
&\leq & C\sum_{k=1}^{n}2^{-(n+1-k)/50}\left(2^{(n-k)/10}\sqrt{\varepsilon_{2}}\wedge \sqrt{\varepsilon_1}\right)\nonumber\\
&\leq & 3\varepsilon_{1}^{1/2}
\end{eqnarray}
for $\varepsilon_{2}$ sufficiently small.

Having bounded the quadratic variation and increment size of $\mathbf{M}_{n+1}$ we use the following inequality for tail probability of a martingale, which is a generalisation of Bernstein's inequality.

\begin{theorem}[Freedman, 1975 \cite{Freedman75}]
\label{t:fried}
Let $\{X_n\}_{n\geq 1}$ be a martingale with respect to the filtration $\{\cf_n\}$, with $|X_{k+1}-X_{k}|\leq R$ almost surely. Let $Y_{n}=\sum_{i=1}^{n-1}\E\left[(X_{i+1}-X_{i})^2\mid \cf_i\right]$. Then for each $t$
\begin{equation}
\label{e:fried}
\P[\exists n: X_n-X_1> t, Y_n\leq \sigma ^2]\leq \exp \left\{-\frac{t^2}{2(\sigma^2 + Rt/3)} \right\}.
\end{equation}
\end{theorem}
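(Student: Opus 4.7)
The plan is to apply the classical exponential martingale technique to obtain a Bernstein-type tail bound: construct an exponential supermartingale indexed by a free parameter $\lambda$, apply optional stopping, and optimise over $\lambda$. Concretely, for each $\lambda>0$ I would introduce the process
\[
Z_n^{\lambda} := \exp\!\bigl(\lambda(X_n-X_1) - \phi(\lambda)\, Y_n\bigr),
\]
where $\phi(\lambda) := R^{-2}(e^{\lambda R} - 1 - \lambda R)$, and verify that $(Z_n^{\lambda})$ is a supermartingale with respect to $\{\cf_n\}$ starting at $Z_1^{\lambda}=1$. The key ingredient is the elementary bound that any mean-zero random variable $D$ with $|D|\leq R$ satisfies $\E[e^{\lambda D}]\leq \exp(\phi(\lambda)\,\E[D^2])$, obtained by Taylor-expanding the exponential, using $\E[D]=0$ together with $|D|^k\leq R^{k-2}D^2$ for $k\geq 2$, and recognising the resulting series. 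Applied conditionally on $\cf_{n-1}$ to the martingale increment $D_n=X_n-X_{n-1}$, this yields the required supermartingale property.

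Next, I would introduce the stopping time $\tau := \inf\{n : X_n - X_1 > t,\ Y_n \leq \sigma^2\}$ (with $\inf\emptyset := \infty$) and apply Doob's optional stopping theorem to $\tau \wedge N$: since $\E[Z_{\tau \wedge N}^{\lambda}] \leq 1$, and on the event $\{\tau<\infty\}$ we have $Z_{\tau}^{\lambda} \geq \exp(\lambda t - \phi(\lambda)\sigma^2)$, sending $N\to\infty$ and invoking Fatou's lemma gives
\[
\P\bigl[\exists n: X_n-X_1>t,\ Y_n\leq \sigma^2\bigr]\ \leq\ \exp\!\bigl(\phi(\lambda)\sigma^2 - \lambda t\bigr)
\]
for every $\lambda>0$. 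To remove the transcendental $\phi$ and optimise, I would use the elementary estimate $\phi(\lambda)\leq \lambda^2/(2(1-\lambda R/3))$ valid for $0<\lambda R<3$, which is a term-by-term Taylor comparison resting on the easy induction $k!\geq 2\cdot 3^{k-2}$ for $k\geq 2$. Then the choice $\lambda = t/(\sigma^2 + Rt/3)$ gives $1-\lambda R/3 = \sigma^2/(\sigma^2+Rt/3)$, and a direct substitution collapses the exponent to $-t^2/\bigl(2(\sigma^2+Rt/3)\bigr)$, as claimed.

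The main obstacle is essentially bookkeeping rather than depth: the probabilistic content is confined to verifying the exponential-supermartingale property and one application of optional stopping, while the rest is elementary calculus. The only mildly delicate point is passing to the limit $N\to\infty$ in the optional stopping step; this is standard because $(Z_{\tau\wedge N}^{\lambda})$ is a nonnegative supermartingale and Fatou's lemma applies on the event in question without any additional integrability hypothesis.
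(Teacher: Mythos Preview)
Your argument is the standard proof of Freedman's inequality and is correct as written: the moment bound $\E[e^{\lambda D}\mid\cf_{n-1}]\leq \exp(\phi(\lambda)\,\E[D^2\mid\cf_{n-1}])$ follows cleanly from the monotonicity of $x\mapsto (e^x-1-x)/x^2$, the supermartingale property of $Z_n^{\lambda}$ is then immediate, the optional stopping step is unproblematic since $Z^{\lambda}$ is nonnegative, and the final optimisation with $\lambda=t/(\sigma^2+Rt/3)$ and the series comparison $k!\geq 2\cdot 3^{k-2}$ is accurate.

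There is nothing to compare against in the paper: Theorem~\ref{t:fried} is quoted from \cite{Freedman75} without proof and used as a black box in the proof of Theorem~\ref{t:mn}. Your write-up would serve perfectly well as a self-contained appendix should one be desired.
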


Now we are ready to prove Theorem \ref{t:mn}.

\begin{proof}[Proof of Theorem \ref{t:mn}]
Choose $\varepsilon_1$ sufficiently small so that $\varepsilon_6\geq \left(\sqrt{20C_8}\vee \sqrt{20}\right)\varepsilon_1^{1/2}$. For $i,j=1,2$ let the $(i,j)$-th entry of $\mathbf{M}_{n}$ be $M_n^{i,j}$. Consider the martingales $\{X_n^{i,j}\}=\{M_{n\wedge \tau'}^{i,j}\}$. It follows from Theorem \ref{t:fried} using (\ref{e:mndiffbound2}) and (\ref{e:mnabsdifference}) that
$$\P[\exists n<\tau': |X_n^{i,j}-\delta_{i,j}|\geq \varepsilon_6]\leq 2e^{-5}.$$
Taking a union bound over different values of $i$ and $j$ it follows that
$$\P[\exists n<\tau': ||\mathbf{M}_n-{\rm \mathbf{I}}||\geq \varepsilon_6]\leq 8e^{-5}< \frac{1}{5}.$$
This finishes the proof of the theorem.
\end{proof}

\subsection{Bounding $\mathbf{Y}_n-\mathbf{M}_n$}

We define $\mathbf{D}_n=\mathbf{Y}_n-\mathbf{M}_n$. Note that $\mathbf{D}_1=\mathbf{0}$. We have the following lemma.

\begin{lemma}
\label{l:dn}
Set $\varepsilon_8=\eta/800$. There exists $\varepsilon_1,\varepsilon_2>0$, satisfying the conclusion of Theorem \ref{t:mn} such that we have
\begin{equation}
\label{e:dnbound}
||\mathbf{D}_{t}||1_{\{t<\tau'\}} \leq \varepsilon_8.
\end{equation}
\end{lemma}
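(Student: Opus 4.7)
The plan is to telescope $\mathbf{D}_t = \sum_{n=1}^{t-1}(\mathbf{D}_{n+1}-\mathbf{D}_n)$ and use a Leibniz-type expansion of $\mathbf{Y}_{n+1}$ to compare it with $\mathbf{M}_{n+1}$ term by term. Writing $A_i := \mathbf{I} + \sum_{j=i+1}^{n}\mathbf{W}_{i,j}$ and $P_{a,b} := A_a A_{a+1}\cdots A_b$ (with the empty product equal to $\mathbf{I}$), the identity $\mathbf{Y}_{n+1} = \prod_{i=1}^{n}(A_i + \mathbf{W}_{i,n+1})$ expanded via Leibniz yields
\[
\mathbf{Y}_{n+1}-\mathbf{Y}_n = \sum_{k=1}^{n} P_{1,k-1}\,\mathbf{W}_{k,n+1}\,P_{k+1,n} + Q_n,
\]
where $Q_n$ is the sum of all interleaved products involving two or more $\mathbf{W}_{*,n+1}$ factors. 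Since $\mathbf{M}_{n+1}-\mathbf{M}_n$ is by construction the $\mathbf{V}$-part of this linear sum, decomposing $\mathbf{W}=\mathbf{U}+\mathbf{V}$ gives $\mathbf{D}_{n+1}-\mathbf{D}_n = L_n + Q_n$ with $L_n := \sum_k P_{1,k-1}\,\mathbf{U}_{k,n+1}\,P_{k+1,n}$.

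To bound $L_n$ I rewrite $P_{1,k-1} = \mathbf{Y}_n\,(A_k P_{k+1,n})^{-1}$ so that on $\{n<\tau'\}$ the factor $\|\mathbf{Y}_n\|\leq 2$ absorbs the potentially long left product. With $\varepsilon_5$ (and hence $\varepsilon_2$) chosen as in the proof of Theorem \ref{t:mn}, Proposition \ref{p:psideltaderivative} gives $\|A_i^{\pm 1}\|\leq 1+\varepsilon_5$ and $(1+\varepsilon_5)^{2(n-k)}\leq 2^{(n-k)/100}$. Combined with Proposition \ref{p:uin},
\[
\|L_n\|\,1_{\{n<\tau'\}} \leq C\,\Delta_{n+1,X}\sum_{k=1}^n \tilde{\Delta}_{k,X,n+1}\,2^{-(n+1-k)/50}.
\]
Summing over $n\leq t-1$, applying $ab\leq\tfrac12(a^2+b^2)$ to $\Delta_{n+1,X}\cdot\tilde{\Delta}_{k,X,n+1}$, and swapping the order of summation (the geometric factor $\sum_m 2^{-m/50}$ is uniformly bounded), one obtains
\[
\sum_{n=1}^{t-1}\|L_n\|\,1_{\{t<\tau'\}} \leq C\Bigl(\sum_{n=1}^{t-1}\Delta_{n+1,X}^2 + \sum_{k=1}^{t-1}\tilde{\Delta}_{k,X,t}^2\Bigr) \leq C\,\varepsilon_1,
\]
using $|\Delta_{i,X}|\leq \tilde{\Delta}_{i,X,t}$ together with the definition of $\tau_1$.

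For $Q_n$ I iterate Leibniz once more to write $Q_n = \sum_{k=1}^n P_{1,k-1}\,\mathbf{W}_{k,n+1}\bigl[\prod_{i=k+1}^n(A_i+\mathbf{W}_{i,n+1}) - P_{k+1,n}\bigr]$. By submultiplicativity and the bound $\sum_i\|\mathbf{W}_{i,n+1}\|\leq C\varepsilon_1^{1/2}$ (Cauchy--Schwarz applied to Propositions \ref{p:vin},\ref{p:uin} against the $\tau_1$-bound $\sum_k\tilde{\Delta}_{k,X,n+1}^2\leq\varepsilon_1$), the tail partial products $\prod_{i>j}(A_i+\mathbf{W}_{i,n+1})$ are controlled by $C(1+\varepsilon_5)^{n-j}$. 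After absorbing these factors into the exponential decay supplied by Propositions \ref{p:vin},\ref{p:uin} and applying Cauchy--Schwarz once more,
\[
\|Q_n\|\,1_{\{n<\tau'\}}\leq C\sum_{k=1}^n \tilde{\Delta}_{k,X,n+1}^2\,2^{-(n+1-k)/50},
\]
and the same summation and swap as above give $\sum_n\|Q_n\|\,1_{\{t<\tau'\}}\leq C\,\varepsilon_1$.

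Combining the two estimates yields $\|\mathbf{D}_t\|\,1_{\{t<\tau'\}}\leq C\,\varepsilon_1$ for an absolute constant $C$, and choosing $\varepsilon_1$ (compatibly with Theorem \ref{t:mn}) small enough that $C\,\varepsilon_1\leq\varepsilon_8=\eta/800$ proves the lemma. The central subtlety is the non-commutativity of the matrix products appearing in $Q_n$: a naive bound on each interleaved product would introduce an unbounded $(1+\varepsilon_5)^n$ factor, and the essential step is to reexpress every partial product in terms of $\mathbf{Y}_n$ (using $\|\mathbf{Y}_n\|\leq 2$ on $\{n<\tau'\}$) so that only tail products of length $n-k$ remain, whose growth is dominated by the exponential decay $2^{-(n+1-k)/20}$.
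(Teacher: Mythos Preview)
Your proof is correct and follows essentially the same approach as the paper: telescope $\mathbf{D}_t$, split each increment into the linear-in-$\mathbf{U}$ piece (your $L_n$, the paper's $\mathbf{A}_n$) and the higher-order piece (your $Q_n$, the paper's $\mathbf{B}_n$), then control both via Propositions~\ref{p:vin} and~\ref{p:uin} together with the $\|\mathbf{Y}_n\|\leq 2$ trick and Cauchy--Schwarz. Your iterated-Leibniz expression for $Q_n$ is equivalent to the paper's sum over subsets $S\subseteq[n]$ with $|S|\geq 2$.

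One small bookkeeping point: your final bound $C\varepsilon_1$ should be $C(\varepsilon_1+\varepsilon_2)$, matching the paper. The sum $\sum_{n=1}^{t-1}\Delta_{n+1,X}^2=\sum_{m=2}^{t}\Delta_{m,X}^2$ includes the term $\Delta_{t,X}^2$, which is not covered by the $\tau_1$-bound $\sum_{i=1}^{t-1}\tilde{\Delta}_{i,X,t}^2\leq\varepsilon_1$ (that sum stops at $i=t-1$); this last term is instead controlled by $\varepsilon_2$ via $\{t<\tau_2\}$. This does not affect the conclusion, since you may choose both $\varepsilon_1$ and $\varepsilon_2$ small.
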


\begin{proof}
Observe that on $\{t<\tau'\}$, we have
$$||\mathbf{D}_t||\leq \sum_{n=1}^{t-1}||\mathbf{D}_{n+1}-\mathbf{D}_{n}||.$$
Expanding out $D_n$ we have
\begin{eqnarray}
\label{e:dndiffbound1}
\mathbf{D}_{n+1}-\mathbf{D}_{n}&=& \sum_{k=1}^{n}\left(\prod_{i=1}^{k}\left({\rm\mathbf{I}}+\sum_{j=i+1}^{n}\mathbf{W}_{i,j}\right)\mathbf{U}_{k,n+1}\prod_{i=k+1}^{n}\left({\rm\mathbf{I}}
+\sum_{j=i+1}^{n}\mathbf{W}_{i,j}\right)\right)\nonumber\\
&+ & \sum_{S\subseteq [n],|S|\geq 2}\left(\prod_{i=1}^{n}\left(1_{\{i\in S\}}\mathbf{W}_{i,n+1}+ 1_{\{i\notin S\}}\left({\rm\mathbf{I}}+\sum_{j=i+1}^{n}\mathbf{W}_{i,j}\right)\right)\right).
\end{eqnarray}

Call the first term on the right hand side of the above equation $\mathbf{A}_n$, call the second term $\mathbf{B}_n$. Choosing $\varepsilon_5=100(C_5+1)\varepsilon_2$ and arguing as in (\ref{e:mndiffbound1}) we get that for some absolute constant $C>0$ we get that on $\{n+1<\tau'\}$,

\begin{eqnarray}
\label{e:anbound}
||\mathbf{A}_n|| & \leq & C\sum_{k=1}^{n} 2^{-(n+1-k)/20}(1+\varepsilon_5)^{2(n+1-k)}\tilde{\Delta}_{k,X,n+1}\Delta_{n+1,X}\nonumber\\
&\leq & C \sum_{k=1}^{n} 2^{-(n+1-k)/50}\left(\tilde{\Delta}_{k,X,n+1}^2+\Delta_{n+1,X}^2\right)\nonumber\\
&\leq & C\Delta_{n+1,X}^2+ C\sum_{k=1}^{n} 2^{-(n+1-k)/50}\tilde{\Delta}_{k,X,n+1}^2.
\end{eqnarray}
It follows that on $\{t<\tau'\}$ for some absolute constants $C,C_9>0$
\begin{eqnarray}
\label{e:anbound1}
\sum_{n=1}^{t-1}||\mathbf{A}_n|| &\leq & C\sum_{n=1}^{t-1}\Delta_{n+1,X}^2+C\sum_{n=1}^{t-1}\sum_{k=1}^{n} 2^{-(n+1-k)/50}\tilde{\Delta}_{k,X,t}^2\nonumber\\
&\leq & C\sum_{i=1}^{t}\tilde{\Delta}_{i,x,t}^2 \leq C_9(\varepsilon_1+\varepsilon_2).
\end{eqnarray}
For obtaining a bound on $\mathbf{B}_n$, observe the following. Fix $S\subseteq [n]$ with $|S|\geq 2$. It follows from Proposition \ref{p:vin} and Proposition \ref{p:uin} that for some absolute constant $C>0$ we have $|\mathbf{W}_{i,n+1}|\leq C\tilde{\Delta}_{i,X}2^{-(n+1-i)/20}$ for each $i\in S$. Arguing similarly as in (\ref{e:mndiffbound1}) it follows by taking $\varepsilon_2$ sufficiently small, on $\{n+1<\tau'\}$, we have
\begin{eqnarray}
\label{e:bnbound1}
\prod_{i=1}^{n}\left(1_{\{i\in S\}}\mathbf{W}_{i,n+1}+ 1_{\{i\notin S\}}\left({\rm \mathbf{I}}+\sum_{j=i+1}^{n}\mathbf{W}_{i,j}\right)\right) & \leq & ||Y_n||\prod_{i\in S} C\tilde{\Delta}_{i,X,n+1}(1+\varepsilon_5)^{2(n+1-i)}2^{-(n+1-i)/20}\nonumber\\
&\leq & \prod_{i\in S} C\tilde{\Delta}_{i,X,n+1}2^{-(n+1-i)/50}.
\end{eqnarray}
Summing over all $S\subseteq [n]$ with $|S|\geq 2$ we get that
\begin{equation}
\label{e:bnbound2}
||\mathbf{B}_n||\leq \prod_{i=1}^{n}\left(1+ C\tilde{\Delta}_{i,X,n+1}2^{-(n+1-i)/50}\right)-1-\sum_{i=1}^{n} C\tilde{\Delta}_{i,X,n+1}2^{-(n+1-i)/50}.
\end{equation}

Now observe that on $\{n+1<\tau'\}$, by choosing $\varepsilon_2$ sufficiently small we have
$$\sum_{i=1}^{n} C\tilde{\Delta}_{i,X,n+1}2^{-(n+1-i)/50}\leq \frac{1}{10}$$
by the argument used in (\ref{e:mnabsdifference}). It then follows that
\begin{equation}
\label{e:bnbound3}
||\mathbf{B}_n||\leq 2C\sum_{i=1}^{n} \tilde{\Delta}_{i,X,n+1}2^{-(n+1-i)/50}\leq 10C\sum_{i=1}^{n} \tilde{\Delta}_{i,X,n+1}^22^{-(n+1-i)/200},
\end{equation}
where the final step follows from the Cauchy-Schwarz inequality.

Summing over $n$ we get on $\{t<\tau'\}$, for some absolute constant $C_{10}>0$

\begin{equation}
\label{e:bnbound4}
\sum_{n=1}^{t-1}||\mathbf{B}_n||\leq C_{10}\sum_{i=1}^{t} \tilde{\Delta}_{i,X,t}^2\leq C_{10}(\varepsilon_1+\varepsilon_2) ,
\end{equation}

It now follows from (\ref{e:dndiffbound1}), (\ref{e:anbound1}) and (\ref{e:bnbound4}) that we have

$$||\mathbf{D}_{t}||1_{\{t<\tau'\}} \leq (C_9+C_{10})(\varepsilon_1+\varepsilon_2).$$

Choosing $\varepsilon_1$ and $\varepsilon_2$ sufficiently small such that $(C_9+C_{10})(\varepsilon_1+\varepsilon_2)\leq \varepsilon_8$, we complete the proof of the lemma.
\end{proof}

\subsection{Proof of Theorem \ref{t:stoptime}}

Now we are ready to prove Theorem \ref{t:stoptime}.

\begin{proof}
Observe that we have $\varepsilon_6+\varepsilon_8<\varepsilon_4$. Fix $\varepsilon_1$, $\varepsilon_2>0$  such that Theorem \ref{t:mn} and Lemma \ref{l:dn} holds. It follows from and Lemma \ref{l:dn}, that on $\{\tau_6\geq \tau'\}$, for all $n<\tau'$ we have
$$||\mathbf{Y}_n-{\rm\mathbf{I}}||\leq  \varepsilon_6+\varepsilon_8< \varepsilon_4.$$

Proof of Theorem \ref{t:stoptime} is then completed using Theorem \ref{t:mn}.
\end{proof}

\section{Proof of Theorem \ref{t:stretchingfixed}}
\label{s:final}
We complete the proof of Theorem \ref{t:stretchingfixed} in this section. We first need the following lemmas.

\begin{lemma}
\label{l:phitaulip}
Set $\varepsilon_9=\eta/400$. Then we can choose $\varepsilon_1$, $\varepsilon_2>0$, in Theorem \ref{t:stoptime} so that we have on $\{n<\tau\}$, $||\Phi'_{n}-\mathbf{Y}_n||\leq \varepsilon_9$.
\end{lemma}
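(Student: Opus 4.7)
The plan is to bound $\Phi_n' - \mathbf{Y}_n$ by decomposing each factor $F_i(X) := \varphi_i'(\varphi_{i+1}\circ\cdots\circ\varphi_n(X))$ of $\Phi_n'$ as $F_i = \bar{F}_i + R_i$, where $\bar{F}_i := \E[F_i \mid \cf_n]$ is the corresponding factor of $\mathbf{Y}_n$ and $R_i := F_i - \bar{F}_i$ is the within-$\Lambda_{n,X}$ fluctuation (centered given $\cf_n$). Then
\[
\Phi_n' - \mathbf{Y}_n \;=\; \prod_{i=1}^n (\bar{F}_i + R_i) - \prod_{i=1}^n \bar{F}_i \;=\; \sum_{\emptyset \neq S \subseteq [n]} \prod_{i=1}^n \bigl( 1_{\{i\in S\}} R_i + 1_{\{i \notin S\}} \bar{F}_i \bigr),
\]
the (ordered) product expansion. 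This is structurally the same expansion used for $\mathbf{D}_n = \mathbf{Y}_n - \mathbf{M}_n$ in the proof of Lemma~\ref{l:dn}.

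The first step is to bound the fluctuations $\|R_i\|$. Since $R_i$ measures the deviation of $F_i$ at $X$ from its average over $\Lambda_{n,X}$, it is controlled by the oscillation of $F_i$ over $\Lambda_{n,X}$, which is exactly the quantity estimated in Propositions~\ref{p:vin} and \ref{p:uin}. The same two-case analysis — splitting on whether $i + \beta_{i,X} < n$ and whether the seam $S_{i,X}$ is crossed — gives $\|R_i\| 1_{\{n<\tau\}} \leq C\tilde{\Delta}_{i,X,n} 2^{-(n-i)/20}$. The complementary bound $\|\bar{F}_i - \mathbf{I}\| \leq C \tilde{\Delta}_{i,X,n}$ follows from geometric summation of $\|\mathbf{W}_{i,j}\| \leq C\tilde{\Delta}_{i,X,j} 2^{-(j-i)/20}$, and the individual Lipschitz bound $\|F_i\| \leq 1 + \varepsilon_5$ on $\{n < \tau_2\}$ follows from Proposition~\ref{p:lipschitz} after taking $\varepsilon_2$ small.

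The second step is to plug these estimates into the expansion. The terms with exactly one $R_k$ (summed over $k$) are handled by precisely the calculation done for $\mathbf{A}_n$ in (\ref{e:anbound})--(\ref{e:anbound1}), giving a contribution of order $C\sum_k \tilde{\Delta}_{k,X,n}^2 2^{-(n-k)/50} + C\sum_k \Delta_{k,X}^2 \leq C(\varepsilon_1+\varepsilon_2)$. Terms with $|S| \geq 2$ are handled by the $\mathbf{B}_n$ argument in (\ref{e:bnbound1})--(\ref{e:bnbound4}): the sum telescopes via $\prod(1+x_i) - 1 - \sum x_i \leq 10\sum x_i^2$ (valid when $\sum x_i \leq 1/10$, which holds for $\varepsilon_1, \varepsilon_2$ small by Cauchy--Schwarz), again bounded by $C(\varepsilon_1 + \varepsilon_2)$.

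The main obstacle is the same one overcome in Lemma~\ref{l:dn}: naive partial products $\prod_{i<k} \|\bar{F}_i\|$ and $\prod_{i>k} \|F_i\|$ contribute growth factors $(1+\varepsilon_5)^n$ that, without care, would overwhelm the $2^{-(n-k)/20}$ decay of $\|R_k\|$. The resolution is to take $\varepsilon_5 = 100(C_5+1)\varepsilon_2$ small enough that $(1+\varepsilon_5)^{2(n-k)} \cdot 2^{-(n-k)/20} \leq 2^{-(n-k)/50}$, so that the growth is absorbed into the decay factor by factor in each product of matrices close to $\mathbf{I}$. Choosing $\varepsilon_1$ and $\varepsilon_2$ small enough (compatible with the choices in Theorem~\ref{t:mn} and Lemma~\ref{l:dn}), the above estimates combine to give $\|\Phi_n' - \mathbf{Y}_n\| 1_{\{n<\tau\}} \leq C(\varepsilon_1 + \varepsilon_2) \leq \varepsilon_9 = \eta/400$.
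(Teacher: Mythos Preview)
Your proof is correct and follows essentially the same route as the paper: decompose each factor as $\bar F_i+\xi_{i,n}$ with $\xi_{i,n}=F_i-\E[F_i\mid\cf_n]$, expand the product over nonempty $S\subseteq[n]$, bound $\|\xi_{i,n}\|\le C\tilde\Delta_{i,X,n}2^{-(n-i)/20}$ by the oscillation estimate from Proposition~\ref{p:vin}, and close with the $\prod(1+x_i)-1$ / Cauchy--Schwarz argument. The paper is marginally more economical in that it treats all $S\neq\emptyset$ in one stroke (no separate $|S|=1$ versus $|S|\ge2$ analysis, so the analogy to $\mathbf{A}_n$ is unnecessary) and only needs the $\mathbf{V}$-type bound, not Proposition~\ref{p:uin}.
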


\begin{proof}
Write
$$\mathbf{\xi}_{i,n}=\varphi_i'(\varphi_{i+1} \circ \ldots \circ \varphi_n)-\E[\varphi_i'(\varphi_{i+1} \circ \ldots \circ \varphi_n)\mid \cf_n].$$
Observe that $||\mathbf{\xi}_{i,n}(X)||$ is upper bounded by the right hand side of (\ref{e:vinbound1}) with $n$ there replaced by $(n+1)$. Now arguing as in the proof of Proposition \ref{p:vin} it follows that for some absolute constant $C>0$ we have
\begin{equation}
\label{e:xiin}
||\mathbf{\xi}_{i,n}(X)||\leq C\tilde{\Delta}_{i,X,n}2^{-(n-i)/20}.
\end{equation}
Notice that
\begin{equation}
\label{e:phiprime}
\Phi'_{n}= \prod_{i=1}^n \left({\rm \mathbf{I}}+ \sum_{j=i+1}^{n}\mathbf{W}_{i,j} +\mathbf{\xi}_{i,n}\right).
\end{equation}
It now follows that
\begin{equation}
\label{e:phiprime1}
\Phi'_{n}-\mathbf{Y}_n=\sum_{\emptyset\neq S\subseteq [n]} \prod_{i=1}^n \left(1_{i\notin S}\left({\rm \mathbf{I}}+ \sum_{j=i+1}^{n}\mathbf{W}_{i,j}\right) +1_{i\in S}\mathbf{\xi}_{i,n}\right).
\end{equation}
An argument similar to the one used in (\ref{e:bnbound2}) gives that for $\varepsilon_2$ sufficiently small, we have on $\{n<\tau\}$
\begin{equation}
\label{e:phiprime2}
||\Phi'_{n}-\mathbf{Y}_n||\leq  \prod_{i=1}^n \left(1+C\tilde{\Delta}_{i,x,n}2^{-(n-i)/100}\right)-1\leq 3C\sum_{i=1}^{n}\tilde{\Delta}_{i,x,n}2^{-(n-i)/100}.
\end{equation}
Using the Cauchy-Schwarz inequality as in (\ref{e:bnbound3}) we get that for some absolute constat $C>0$ we have
$$||\Phi'_{n}-\mathbf{Y}_n||\leq C(\varepsilon_1+\varepsilon_2).$$
The lemma follows by taking $\varepsilon_1$ and $\varepsilon_2$ sufficiently small.
\end{proof}

\begin{lemma}
\label{l:deltatildedelta}
We can choose $\varepsilon_1$, $\varepsilon_2>0$ in Theorem \ref{t:stoptime} such that for some absolute constant $C_{11}>0$ we have
\begin{equation}
\label{e:deltadelta}
\E[\tilde{\Delta}_{i,X,\tau}^2\mid \cf_{i+1}]\leq C_{11}\Delta_{i,X}^2.
\end{equation}
\end{lemma}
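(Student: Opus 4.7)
The strategy is to pull out the $\cf_{i+1}$-measurable quantity $\Delta_{i,X}^2$ and reduce the claim to
\[
\E\bigl[2^{(\beta_{i,X}\wedge(\tau-i))/5}\bigm|\cf_{i+1}\bigr]\leq C_{11},
\]
which I would split into the two cases $\beta_{i,X}\leq\tau-i$ and $\beta_{i,X}>\tau-i$.

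For the first case I would establish the geometric tail bound $\P[k\leq\beta_{i,X}<\infty\mid\cf_{i+1}]\leq C\cdot 2^{-k/2}$. Writing $\beta_{i,X}=J_{i,X}\vee\alpha_{i,X}$, the set of $y\in\Lambda_{i+1,X}$ whose level-$(i+k)$ dyadic box meets $T_{i,X}$ but with $y\notin T_{i,X}$ lies in a $C\,2^{-(i+k)/2}$-neighbourhood of $\partial T_{i,X}$; since $\partial T_{i,X}\cap\Lambda_{i,X}$ has one-dimensional Hausdorff measure $O(2^{-i/2})$, this set has measure $O(2^{-i-k/2})$, giving the stated decay after dividing by $\lambda(\Lambda_{i+1,X})=2^{-i}$. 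For $\alpha_{i,X}$, the seam $S_{i,X}$ is a smooth curve of length $O(2^{-i/2})$; by Proposition~\ref{p:lipschitz} the composition $\varphi_{i+1}\circ\cdots\circ\varphi_{i+k-1}$ has bi-Lipschitz constant $(1+\varepsilon_5)^k$ on $\{n<\tau\}$ (governed by $\varepsilon_2$), so the $y$'s whose image meets a thin tube around $S_{i,X}$ form a set of measure $O((1+\varepsilon_5)^{2k}2^{-i-k/2})$, again absorbed into $2^{-k/2}$ for $\varepsilon_5$ small. Consequently $\sum_k 2^{k/5}\P[\beta_{i,X}=k,\beta_{i,X}<\infty\mid\cf_{i+1}]\leq C$, giving the desired bound for this part.

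For the second case I would exploit the deterministic constraint from $\tau\leq\tau_1$: on $\{\beta_{i,X}>\tau-i\}$ the sequence $\tilde{\Delta}_{i,X,n}^2=\Delta_{i,X}^2\cdot 2^{(n-i)/5}$ grows geometrically, and since each such term satisfies $\tilde{\Delta}_{i,X,n}^2\leq\sum_j\tilde{\Delta}_{j,X,n}^2\leq\varepsilon_1$ for $n<\tau_1$, one obtains $\tau-i\leq 5\log_2(\varepsilon_1/\Delta_{i,X}^2)+1$, whence the pointwise inequality $\Delta_{i,X}^2\cdot 2^{(\tau-i)/5}\mathbf{1}_{\{\beta_{i,X}>\tau-i\}}\leq 2^{1/5}\varepsilon_1$. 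The sub-contribution from the finite part $\{\beta_{i,X}>\tau-i,\beta_{i,X}<\infty\}$ can then be estimated by decomposing over the possible values of $\tau-i$ and applying the tail estimate at $k\sim\tau-i\sim 5\log_2(\varepsilon_1/\Delta_{i,X}^2)$, which yields $C\Delta_{i,X}^5/\varepsilon_1^{3/2}$; using $\tau\leq\tau_2$ (so $\Delta_{i,X}^2\leq\varepsilon_2$) and choosing $\varepsilon_2$ sufficiently small relative to $\varepsilon_1$ absorbs this into $C_{11}\Delta_{i,X}^2$.

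The main obstacle lies in the residual event $\{\beta_{i,X}=\infty\}$, which has positive $\cf_{i+1}$-conditional probability $\P[X\in T_{i,X}\mid\cf_{i+1}]=O(r_1^2)$, independent of $\Delta_{i,X}$; unlike the finite-$\beta$ case there is no exponential decay to balance against. Here one has to rely solely on the deterministic cap $\Delta_{i,X}^2\cdot 2^{(\tau-i)/5}\mathbf{1}_{\{\beta_{i,X}=\infty\}}\leq 2^{1/5}\varepsilon_1$ and absorb the residual by a simultaneous small choice of the absolute constants $r_1$, $\varepsilon_1$, $\varepsilon_2$ already available to us (since these are tuned together within the proof of Theorem~\ref{t:stoptime}). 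This delicate interplay between the geometric parameter $r_1$ and the stopping-time parameters is what I expect to be the hardest part of the argument.
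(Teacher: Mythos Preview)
Your reduction to bounding $\E[2^{(\beta_{i,X}\wedge(\tau-i))/5}\mid\cf_{i+1}]$ is right, but the case split and the subsequent analysis create difficulties that the paper avoids entirely. The paper proves the single tail estimate
\[
\P\bigl[\beta_{i,X}\geq n-i,\ \tau\geq n \mid \cf_{i+1}\bigr]\leq 10\cdot 2^{-(n-i)/3},
\]
which is precisely $\P[\beta_{i,X}\wedge(\tau-i)\geq k\mid\cf_{i+1}]\leq C\,2^{-k/3}$; since $1/5<1/3$, the moment bound follows immediately by summation. The crucial point is that the Lipschitz control on $\varphi_{i+1}\circ\cdots\circ\varphi_{n-1}$ is only available on $\{n\leq\tau\}$, so the natural tail event already carries the intersection with $\{\tau\geq n\}$. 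Your claimed bound $\P[k\leq\beta_{i,X}<\infty\mid\cf_{i+1}]\leq C\,2^{-k/2}$ (without the $\tau$-restriction) is not what your seam argument actually establishes.

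This is also why your ``main obstacle'' is a phantom. Once you bound the tail of $\beta_{i,X}\wedge(\tau-i)$ directly, the event $\{\beta_{i,X}=\infty\}$ never needs separate treatment: on it the relevant quantity is $\tau-i$, and $\{\beta_{i,X}=\infty,\ \tau-i\geq k\}\subset\{\beta_{i,X}\geq k,\ \tau\geq i+k\}$ is already covered. Your proposed fix via $r_1$ is also off: $T_{i,X}$ consists of two points (not the blown-up twists $T_{r_1}$), so $\P[X\in T_{i,X}]=0$ and $J_{i,X}<\infty$ almost surely; moreover $r_1$ is a fixed parameter of the auxiliary function $h$, not a knob available alongside $\varepsilon_1,\varepsilon_2$. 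Similarly, your Case~2 computation leading to $C\Delta_{i,X}^5/\varepsilon_1^{3/2}$ is not justified: $\tau-i$ can take any value up to its deterministic cap, and $\{\beta_{i,X}>\tau-i\}$ does not force $\beta_{i,X}$ to be large. If you instead decompose over $m=\tau-i$ and use $\{\tau-i=m,\ \beta_{i,X}>m\}\subset\{\beta_{i,X}\geq m,\ \tau\geq i+m\}$, you recover exactly the paper's one-line argument, rendering the split superfluous.
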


\begin{proof}
The above lemma is an immediate consequence of the following lemma.
\end{proof}

\begin{lemma}
We can choose $\varepsilon_1$, $\varepsilon_2>0$ in Theorem \ref{t:stoptime} such that we have $\P[\beta_{i,X}\geq (n-i), \tau \geq n\mid \cf_{i+1}]\leq 10\times 2^{-(n-i)/3}$, for $n$ sufficiently large.
\end{lemma}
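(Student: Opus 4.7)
The plan is to split $\beta_{i,X}=J_{i,X}\vee \alpha_{i,X}$ by a union bound and handle the two contributions separately. For the twist part, $\{J_{i,X}\ge n-i\}$ is exactly the event $\{\Lambda_{n,X}\cap T_{i,X}\ne\emptyset\}$, where $T_{i,X}$ consists of at most two points (the endpoints of the crack of $\Lambda_{i,X}$, which sit as corners of level-$(i+1)$ dyadic boxes). Each of these points lies in only a bounded number of level-$n$ dyadic boxes, each of area $2^{-(n-1)}$. Conditional on $\cf_{i+1}$, the point $X$ is uniform on $\Lambda_{i+1,X}$ of area $2^{-i}$, so we obtain the crude estimate
\[
\P[J_{i,X}\ge n-i\mid\cf_{i+1}]\le C\cdot 2^{-(n-i)},
\]
well below the target rate $2^{-(n-i)/3}$.

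For the seam part I would first upgrade per-scale Lipschitz control into a uniform bound on the composition. On $\{\tau\ge n\}\subseteq\{\tau_2\ge n\}\cap\{\tau_3\ge n\}$ we have $\Delta_{k,X}^2\le\varepsilon_2$ and $\rho_{k-1,X}\ge\gamma/2$ for $k\le n-1$, hence $|\delta(k,\cdot)|\le\tfrac{2}{\gamma}\sqrt{\varepsilon_2}$. Proposition~\ref{p:lipschitz} then forces each $\varphi_k$ to be $(1+\varepsilon_5)$-bi-Lipschitz for an $\varepsilon_5$ that can be made arbitrarily small by shrinking $\varepsilon_2$. Because each $\varphi_k$ is the identity on the boundary of every level-$k$ box, the composition $F:=\varphi_{i+1}\circ\cdots\circ\varphi_n$ maps $\Lambda_{i+1,X}$ bijectively onto itself with global bi-Lipschitz constant at most $(1+\varepsilon_5)^{n-i}$. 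The event $A_{i,n,X}$ is equivalent to $\Lambda_{n-1,X}\cap\Gamma\ne\emptyset$ for the rectifiable curve $\Gamma:=F^{-1}(S_{i,X}\cap\Lambda_{i+1,X})$, whose length is at most $(1+\varepsilon_5)^{n-i}\cdot O(2^{-i/2})$ (the seam has length $O(2^{-i/2})$ in $\Lambda_{i+1,X}$). The set of bad $X$ thus lies in the $O(2^{-(n-1)/2})$-tubular neighbourhood of $\Gamma$, of area $O((1+\varepsilon_5)^{n-i}\cdot 2^{-i/2}\cdot 2^{-(n-1)/2})$. Dividing by $\lambda(\Lambda_{i+1,X})=2^{-i}$ and choosing $\varepsilon_2$ small enough that $1+\varepsilon_5\le 2^{1/6}$ gives
\[
\P[A_{i,n,X},\,\tau\ge n\mid\cf_{i+1}]\le C\,(1+\varepsilon_5)^{n-i}\,2^{-(n-i)/2}\le C\cdot 2^{-(n-i)/3}.
\]
Combined with the twist estimate, the sum is bounded by $10\cdot 2^{-(n-i)/3}$ once $n-i$ (equivalently, $n$) is large enough to absorb the absolute constant $C$.

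The main obstacle is the seam step. Specifically, one must (i) lift the box-by-box bi-Lipschitz control of Proposition~\ref{p:lipschitz} to a genuine global bi-Lipschitz bound on $F$ over $\Lambda_{i+1,X}$---this is where the boundary-identity property of each $\varphi_k$ is essential, allowing segments that cross level-$k$ box boundaries to be split at the boundary (where $\varphi_k$ fixes points) and then bounded piecewise; and (ii) turn $A_{i,n,X}$ into a clean tube-around-a-rectifiable-curve area estimate. Both are straightforward given the geometric setup already in place, so the real care goes into propagating the chain of constants $\varepsilon_2\rightsquigarrow\varepsilon_5$ to guarantee $(1+\varepsilon_5)^{n-i}\cdot 2^{-(n-i)/2}\le 2^{-(n-i)/3}$ uniformly in $n-i$.
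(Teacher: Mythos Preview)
Your proposal is correct and follows essentially the same strategy as the paper: split $\beta_{i,X}=J_{i,X}\vee\alpha_{i,X}$ by a union bound, handle the twist part by a direct box-counting argument, and handle the seam part by using that the composition $\varphi_{i+1}\circ\cdots\circ\varphi_{n-1}$ is $(1+\varepsilon_5)^{n-i}$-bi-Lipschitz (with $\varepsilon_5$ shrinkable via $\varepsilon_2$) so that the preimage of the seam is a short curve. The paper phrases the last step via a net/covering of $S_{i,X}$ and counts level-$n$ boxes meeting the pulled-back net, whereas you use an equivalent tubular-neighbourhood area estimate; your twist bound $O(2^{-(n-i)})$ is in fact sharper than the paper's $2\cdot 2^{-(n-i)/3}$, but both suffice.

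One small point to tighten: on $\{\tau\ge n\}$ you only control $\delta(k,X)$ along the chain of boxes containing $X$, not in every level-$k$ sub-box of $\Lambda_{i+1,X}$, so the \emph{global} bi-Lipschitz bound on $F$ over all of $\Lambda_{i+1,X}$ is not immediate from your hypothesis. The paper glosses over the same issue. The clean fix is to observe that $\{\tau\ge n\}\in\cf_{n-1}$ is a union of level-$(n-1)$ boxes $W$, and on each such $W$ the composition \emph{is} $(1+\varepsilon_5)^{n-i}$-bi-Lipschitz (since the relevant $\delta$'s are those along the chain containing $W$); then the images $G(W)$ are disjoint, each of area $\ge(1+\varepsilon_5)^{-2(n-i)}\lambda(W)$, and those meeting $S_{i,X}$ lie in its $O((1+\varepsilon_5)^{n-i}2^{-n/2})$-tube, which recovers your bound. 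Also, since $\{\tau\ge n\}$ does not control $\delta(n,\cdot)$, it is cleaner to work with $\varphi_{i+1}\circ\cdots\circ\varphi_{n-1}$ (as the paper does, after noting $\varphi_n(\Lambda_{n-1,X})=\Lambda_{n-1,X}$) rather than your $F$ including $\varphi_n$.
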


\begin{proof}
It is clear from definition that $\P[J_{i,X}\geq (n-i)\mid \cf_{i+1}] \leq 2\times 2^{-(n-i)/3}$. Hence it suffices to show that $\P[\alpha_{i,X}\geq (n-i), \tau>n\mid \cf_{i+1}]\leq 8\times 2^{-(n-i)/3}$. It is easy to see that it suffices to show $\P[\alpha_{i,X}\geq (n-i), \tau\geq n\mid \cf_{i}]\leq 4\times 2^{-(n-i)/3}$.

From definition, if $\{\alpha_{i,X}\geq (n-i),n\leq \tau\}$, then $\varphi_{i+1}\circ \cdots \varphi_{n}(\Lambda_{n,X})=\varphi_{i+1}\circ \cdots \varphi_{n-1}(\Lambda_{n,X})$ intersects $S_{i,X}$. Notice that the total length of the curve(s) $S_{i,X}$ is at most $C_{12}2^{-i/2}$ for some absolute constant $C_{12}$. Let $\varepsilon_5$ be a constant such that on $\{n<\tau\}$, $\varphi_{i+1}\circ \cdots \varphi_{n}$ is bi-Lipschitz with Lipschitz constant at most $(1+\varepsilon_5)^{n-i}$. It follows that $\varphi_{i+1}\circ \cdots \varphi_{n\wedge \tau -1}$ is also bi-Lipschitz with Lipschitz constant at most $(1+\varepsilon_5)^{n-i}$. Note that, as before, $\varepsilon_5$ can be made arbitrarily small by taking $\varepsilon_2$ small. Hence it follows that there exists a set $\mathcal{M}$ of $N=8C_{12}(1+\varepsilon_5)^{n-i}2^{(n-i)/2}$ points on $S_{i,X}$ such that any point on $S_{i,X}$ is at most distance $\frac{1}{4}(1+\varepsilon_5)^{(n-i)}2^{-n/2}$ from some point in $\mathcal{M}$. Let $\mathcal{M}=\{x_1,x_2,\ldots, x_{N}\}$. It follows that any point on $(\varphi_{i+1}\circ\cdots \circ \varphi_{n\wedge \tau-1})^{-1}(S_{i,X})$ is at most at distance $\frac{1}{4}2^{-n/2}$ from $(\varphi_{i+1}\circ\cdots \circ \varphi_{n\wedge \tau-1})^{-1}(x_k)$ for some $k$. It follows that for
$$\P[\Lambda_{n,X} ~\text{intersects}~ \varphi_{i+1}\circ\cdots \circ \varphi_{n-1})^{-1}(S_{i,X}),\tau\geq n\mid \cf_i]\leq 32C_{12}(1+\varepsilon_5)^{n-i}2^{-(n-i)/2}\leq 2^{-(n-i)/3}$$
by taking $\varepsilon_5$ sufficiently small completing the lemma.
\end{proof}

\begin{lemma}
\label{l:phitau}
We have for the stopping time $\tau$ w.r.t. the filtration $\cf_i$,
\begin{equation}
\label{e:phitau}
\lambda(\Phi_{\tau-1}(A))=\frac{\E \rho_{\tau,X}^2}{\E \rho_{\tau,X}}=\lambda(A)+\frac{1}{\lambda(A)}{\rm Var}(\rho_{\tau,X}).
\end{equation}
\end{lemma}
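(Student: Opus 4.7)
My plan is to handle the two equalities separately. The second is purely algebraic: since $\{\rho_{n,X}\}_n$ is a bounded martingale (Observation~\ref{o:martingale}), the Optional Stopping Theorem at the a.s.\ finite stopping time $\tau$ gives $\E \rho_{\tau,X}=\E\rho_{1,X}=\lambda(A)$, whence $\E\rho_{\tau,X}^2/\E\rho_{\tau,X}=(\var(\rho_{\tau,X})+\lambda(A)^2)/\lambda(A)=\lambda(A)+\var(\rho_{\tau,X})/\lambda(A)$. The real content is the first equality, and my plan is to compute the Jacobian of $\Phi_{\tau-1}$ atom-by-atom on $\cf_\tau$ and then apply change of variables.

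The key claim to verify is that on every level-$T$ box $\Lambda$, the map $\Phi_{T-1}$ has a \emph{constant} Jacobian equal to $\rho_{T,\Lambda}/\lambda(A)$. Constancy level-by-level is a Radon--Nikodym consequence of Lemma~\ref{l:psideltaarea}: the area identity forces $\Psi_\delta$ to have Jacobian determinant $1+\delta$ a.e.\ on $\Lambda_L$ and $1-\delta$ a.e.\ on $\Lambda_R$. It follows that each $\varphi_k$ has constant Jacobian $1+\sigma_k\delta(k,\cdot)$ on each level-$(k+1)$ sub-box of a level-$k$ box, with $\sigma_k=+1$ on the sub-box containing $v_{k,\cdot}$ and $\sigma_k=-1$ on the other; and since every $\varphi_i$ with $i\ge k+1$ preserves level-$(k+1)$ boxes (they are unions of the level-$i$ boxes on which $\varphi_i$ is an identity-on-boundary bijection), the chain rule yields a constant Jacobian $J(\Lambda')=\prod_{k=1}^n(1+\sigma_k(\Lambda')\delta(k,\mathrm{Anc}_k(\Lambda')))$ for $\Phi_n$ on every level-$(n+1)$ box $\Lambda'$. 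The \emph{main obstacle}, as I see it, is verifying that this product telescopes. Combining the definition of $\delta(k,\cdot)$ with the averaging identity $\rho_{k,\Lambda'}=(\rho_{k+1}^-+\rho_{k+1}^+)/2$ and splitting into the two cases $\sigma_k=\pm 1$ should give $1+\sigma_k\delta_k=\rho_{k+1,\Lambda'}/\rho_{k,\Lambda'}$, after which the product collapses to $\rho_{n+1,\Lambda'}/\rho_{1,\Lambda'}=\rho_{n+1,\Lambda'}/\lambda(A)$. Lining up the normalization in $\delta$ so this cancellation is clean is the step that demands the most care.

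Given the telescoped Jacobian, the conclusion is quick. The atoms $B=\Lambda_{\tau,x}$ of $\cf_\tau$ partition $[0,1]^2$, and on each such atom with $T=\tau|_B$ we have $\Phi_{\tau-1}|_B=\Phi_{T-1}|_B$ with constant Jacobian $\rho_{T,B}/\lambda(A)$, so change of variables gives $\lambda(\Phi_{\tau-1}(A\cap B))=\rho_{T,B}^2\lambda(B)/\lambda(A)$. A short argument shows the images $\Phi_{T_B-1}(B)$ of distinct atoms are essentially disjoint: for two atoms with $T_B\le T_{B'}$, the level-$T_B$ ancestors of $B$ and $B'$ are distinct (else $B'\subseteq B$ would have stopping time $T_B$, not $T_{B'}$), so their images under the bijection $\Phi_{T_B-1}$ are disjoint, and $\Phi_{T_{B'}-1}(B')$ sits inside the image of the level-$T_B$ ancestor of $B'$ because every $\varphi_i$ with $i\ge T_B$ preserves level-$T_B$ boxes. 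Summation over atoms is therefore legitimate and yields $\lambda(\Phi_{\tau-1}(A))=\lambda(A)^{-1}\sum_B\rho_{T,B}^2\lambda(B)=\E\rho_{\tau,X}^2/\lambda(A)=\E\rho_{\tau,X}^2/\E\rho_{\tau,X}$, completing the proof.
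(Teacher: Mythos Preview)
Your proof is correct and follows essentially the same approach as the paper: decompose $[0,1]^2$ into the $\tau$-level dyadic boxes, compute the measure of $\Phi_{\tau-1}(A\cap B)$ on each such box $B$ as $\rho_{\tau,B}^2\lambda(B)/\lambda(A)$, and sum. The paper's proof simply asserts the two measure identities $\lambda(\Phi_{\tau-1}(A\cap W_k))/\lambda(\Phi_{\tau-1}(W_k))=\rho_{\tau,W_k}$ and $\lambda(\Phi_{\tau-1}(W_k))/\lambda(W_k)=\rho_{\tau,W_k}/\lambda(A)$ ``from the definition of $\varphi_{\tau-1}$''; you derive the equivalent constant-Jacobian statement by explicitly telescoping the product $\prod_k(1+\sigma_k\delta_k)=\rho_{\tau}/\lambda(A)$, and you also supply the disjointness argument for the images that the paper leaves implicit.
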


\begin{proof}
Let $W_1,W_2,\ldots $ be the disjoint (except may be at the boundary) $\tau$ level dyadic boxes (i.e., $W_{k}=\Lambda_{\tau,X}$ on $\{X\in W_k\}$) such that $\cup_{k} W_k=\Lambda_1$. It follows from the definition od $\varphi_{\tau-1}$ that for each $k$ we have that subsets of $W_k$ are expanded uniformly
\begin{equation}
\label{e:phitau1}
\frac{\lambda(\Phi_{\tau-1}(A\cap W_k))}{\lambda(\Phi_{\tau-1}(W_k))}=\frac{\lambda(A\cap W_k)}{\lambda(W_k)},
\end{equation}
and that the measure of the image of a box is proportion to its density
\begin{equation}
\label{e:phitau2}
\frac{\lambda(\Phi_{\tau-1}(W_k))}{\lambda(W_k)}=\frac{\lambda(A\cap W_k)}{\lambda(W_k)\lambda(A)}.
\end{equation}
Combining (\ref{e:phitau1}) and (\ref{e:phitau2}) we get
\begin{eqnarray}
\label{e:phitau3}
\lambda(\Phi_{\tau-1}(A))&=&\sum_{k} \lambda(\Phi_{\tau-1}(A\cap W_k)) = \sum_{k} \frac{\lambda(A\cap W_k)^2}{\lambda(A)\lambda(W_k)}\nonumber\\
&=& \frac{1}{\lambda(A)}\sum_{k} \frac{\lambda(A\cap W_k)^2}{\lambda(W_k)^2}\lambda(W_k)
= \frac{1}{\lambda(A)}\E[\rho_{\tau,X}^2],
\end{eqnarray}
which completes the proof of the lemma.
\end{proof}

Now we are ready to prove Theorem \ref{t:stretchingfixed}.

\begin{proof}[Proof of Theorem \ref{t:stretchingfixed}]
Consider $\tau_1$, $\tau_2$, $\tau_3$, $\tau_4$ as in Theorem \ref{t:stoptime}. It follows from Theorem \ref{t:stoptime} that one of the following three cases must hold.

\begin{enumerate}
\item[(i)] $\P[\tau=\tau_1]\geq \frac{1}{6}$.
\item[(ii)] $\P[\tau=\tau_2]\geq \frac{1}{3}$.
\item[(iii)] $\P[\tau=\tau_3]\geq \frac{1}{6}$.
We treat each of these cases separately.
\end{enumerate}
{\bf Case 1:} $\P[\tau=\tau_1]\geq \frac{1}{6}$.

In this case it follows that
$$\E[\sum_{i=1}^{\tau-1} \tilde{\Delta}_{i,X}^2]\geq \frac{\varepsilon_1}{6}.$$

Now observe that using Lemma \ref{l:deltatildedelta} we have

\begin{eqnarray}
\label{deltatau}
\E\left[\sum_{i=1}^{\tau-1}\tilde{\Delta}_{i,X}^2\right]&=& \E\left[\sum_{i}\tilde{\Delta}_{i,X,\tau}^2 1_{\{\tau\geq i+1\}}\right]\nonumber\\
&=& \E\left(\sum_{i}\E[\tilde{\Delta}_{i,X,\tau}^2 1_{\{\tau\geq i+1\}}\mid \cf_{i+1}]\right)\nonumber\\
&\leq & C_{11}\E\left[\sum_{i}\Delta_{i,X}^2 1_{\{\tau\geq i+1\}}\right]\nonumber\\
&=& C_{11}\E\left[\sum_{i=1}^{\tau-1}\Delta_{i,X}^2\right].
\end{eqnarray}

It follows using Obervation \ref{o:martingalevariance} that
$${\rm Var}(\rho_{\tau,X})\geq \frac{\varepsilon_1}{6C_{11}}.$$

It follows now from Lemma \ref{l:phitau} that
$$\lambda(\Phi_{\tau-1}(A))\geq \lambda(A)+2\varepsilon$$
where $\varepsilon$ is a fixed constant smaller than $\frac{\varepsilon_1}{12\gamma'C_{11}}$. Choose $m$ sufficiently large so that $\P[\tau<m/2]<\varepsilon$. It then follows that

$$\lambda(\Phi_{(\tau-1)\wedge m}(A))\geq \lambda(A)+\varepsilon.$$

Also, it follows from Lemma \ref{l:phitaulip} and the definition of $\tau_4$ that $||\Phi'_{(\tau-1)\wedge m}-{\rm \mathbf{I}}||<\eta/100$. Since $\Phi_{(\tau-1)\wedge m}$ is continuously differentiable except on finitely many curves, it follows that we have $\phi:=\Phi_{(\tau-1)\wedge m}$ is bi-Lipschitz with Lipschitz constant $1+\eta$. So the proof of Theorem \ref{t:stretchingfixed} is finished in this case.

{\bf Case 2:} $\P[\tau=\tau_2]\geq \frac{1}{3}$.

In this case we have $\P[\tau_2 < \infty]\geq \frac{1}{3}$. Hence it follows that there exists $x_1,x_2,\ldots x_n\in \Lambda_{1}$ and $i_1,i_2,\ldots ,i_n>0$ such that $\Lambda_{i_{k},x_{k}}$ are disjoint (except may be at the boundary), $|\Delta_{i_{k},x_{k}}|\geq \sqrt{\varepsilon_2}$, for each $k$ and $\sum_{k=1}^{n}\lambda(\Lambda_{i_{k},x_{k}})\geq \frac{1}{12}$. Define the function $\phi$ as follows. Set $\phi=\Psi_{\sqrt{\varepsilon_2}, \Lambda_{i_k,x_k},\rightarrow}$ on $\Lambda_{i_k,x_k}$ if $i_k$ is odd and $\phi=\Psi_{\sqrt{\varepsilon_2}, \Lambda_{i_k,x_k},\uparrow}$ on $\Lambda_{i_k,x_k}$ if $i_k$ is even. Set $\phi$ to be identity on $\Lambda_1\setminus (\cup_{k} \Lambda_{i_k,x_k})$. It is clear that such a $\phi$ is well-defined, identity on the boundary of $\Lambda_1$ and is bi-Lipschitz with Lipschitz constant $(1+\eta)$ by choosing $\varepsilon_2$ sufficiently small. Now observe that
$$\lambda(\phi(A\cap \Lambda_{x_k,i_k}))-\lambda(A\cap \Lambda_{i_k,x_k}))\geq \lambda(\Lambda_{i_k,x_k})){\varepsilon_2}.$$
Summing over $k$ we get that
$$\lambda(\phi(A))\geq \lambda(A)+ \frac{{\varepsilon_2}}{12}.$$
So the conclusion of Theorem \ref{t:stretchingfixed} holds for $\varepsilon< \frac{\varepsilon_2}{12}$.

{\bf Case 3:} $\P[\tau=\tau_3]\geq \frac{1}{6}$.

In this case also it follows that ${\rm Var}(\rho_{\tau,X})\geq \frac{\varepsilon_{3}^2}{6}$. Arguing as in case 1, it follows that in this case also there is a bi-Lipschitz bijection $\phi$ with Lipschitz constant $1+\eta$ such that
$$\lambda(\phi(A))=\lambda(A)+\varepsilon$$
where $\varepsilon$ is a constant smaller than $\frac{\varepsilon_3^{2}}{12}$.

This completes the proof of Theorem \ref{t:stretchingfixed}.
\end{proof}

\bibliography{stretching}
\bibliographystyle{plain}

\appendix
\gdef\thesection{Appendix \Alph{section}}
\section{Estimates for $g_{r}$ and $\Psi_{\delta}$}
\label{s:appa}
In this appendix we provide the proofs of Lemma \ref{l:psideltacont}, Lemma \ref{l:grthetasmoothnesstheta}, Lemma \ref{l:grthetasmoothnessr}, Lemma \ref{l:2ndd1}, Lemma \ref{l:2ndd2} and Lemma \ref{l:2ndd3}.

\begin{proof}[Proof of Lemma \ref{l:psideltacont}]
Let $\tilde{\Lambda}^{1}=[0,1]\times [0,\frac{1}{2})$ and $\tilde{\Lambda}^{2}=[0,1]\times (\frac{1}{2},1]$.

{\bf Step 1}: $\Psi_{\delta}$ is continuous on $\tilde{\Lambda}^1\cup \tilde{\Lambda}^{2}$.

Notice that it is clear that $\Psi_{\delta}$ is continuous at $(1/2,0)$. Hence it suffices to prove that for $g_r$ defined by (\ref{e:grtheta1}) and (\ref{e:grtheta2}) we have $(r,\ell)\rightarrow g_r(\ell)$ is continuous. Without loss of generality assume $l\leq 0$. Define
\begin{equation}
\label{e:integralH}
H_r(\ell)=(1+h'(r))\ell-\frac{h'(r)\sin(\ell\Theta(r))}{\Theta(r)}.
\end{equation}
Notice that it follows from (\ref{e:grtheta1}) that
\begin{equation}
\label{e:grtheta3}
(1+\delta)(H_{r}(\ell)-H_{r}(-1))=H_{r}(g_r(\ell))-H_{r}(-1)
\end{equation}
and the assertion follows from the continuity of $H_{r}$.

{\bf Step 2:} Let $x=(x_1,\frac{1}{2})$ with $x\in [0,\frac{1}{2}]$. Then $\Psi_{\delta}$ is contnuous at $x$.

Without loss of generality assume $x_1\leq \frac{1}{2}$. Take $u_n=(u_1^n,u_2^n)\in [0,1]^2$ converging to $x$. Without loss of generality assume $\{u_{n}\}\subseteq \tilde{\Lambda}^{1}$. Let $(r_n,\ell_n)=K(u_n)$. Then $(r_n,\ell_n)\rightarrow (\frac{1}{2}, 2x_1-1)$. To prove that $\ell_n\rightarrow \ell= 2x_1-1$, observe that,
$$u_2^n=\frac{1}{2}+(r_n+h(r_n))\sin (\ell\Theta(r_n)).$$
Taking limit as $r_n\rightarrow \frac{1}{2}$ we get the result. Now taking limit as $r_n\rightarrow \frac{1}{2}$, and $\ell_n\rightarrow \ell$ in (\ref{e:integralH}) we get
$(1+\delta)(\ell+1)=\lim(g_r(\ell)+1)$, which proves Step 2.
\end{proof}

\begin{proof}[Proof of Lemma \ref{l:grthetasmoothnesstheta}]
Without loss of generality fix $\ell\in [-1,0)$. Observe that

\begin{equation}
\label{e:grthetasmoothness3}
\delta \int_{-1}^{\ell} (1+h'(r)-h'(r)\cos(\theta\Theta(r)))~d\theta= \int_{\ell}^{g_{r}(\ell)} (1+h'(r)-h'(r)\cos(\theta\Theta(r)))~d\theta.
\end{equation}

Using mean value theorem it follows that there exists $\theta^*\in (-1,\ell), \theta^{**}\in (\ell,g_{r}(\ell))$ such that
$$\delta(\ell+1) (1+h'(r)-h'(r)\cos(\theta^*\Theta(r)))= (g_r(\ell)-\ell) (1+h'(r)-h'(r)\cos(\theta\Theta^{**}(r))).$$

It follows that there exists a constant $C>0$, such that we have for all $r,\theta$,

\begin{equation}
\label{e:grthetasmoothness2}
(g_r(\ell)-\ell)\leq C\delta(\ell+1)
\end{equation}

since $h'(r)=O(h(r)^2)$ as $r\rightarrow \frac{1}{2}$. Moreover, since $h'(r)=o(h(r)^2)$ as $r\rightarrow \frac{1}{2}$, we have

\begin{equation}
\label{e:grthetadelta}
(g_r(\ell)-\ell)=\delta(\ell+1)(1+o(1))
\end{equation}
as $r\rightarrow \frac{1}{2}$.

Differentiating the integral equation (\ref{e:grthetasmoothness3}), we get
$$(1+\delta)(1+h'(r)-h'(r)\cos(\ell\Theta(r)))=\frac{\partial g_r}{\partial \ell}(1+h'(r)-h'(r)\cos(g_r(\ell)\Theta(r))). $$

It follows that

\begin{equation}
\label{e:grthetasmoothness4}
(\frac{\partial g_r}{\partial \ell}-1)=\delta\dfrac{(1+h'(r)-h'(r)\cos(\ell\Theta(r)))+\delta^{-1} h'(r)(\cos(g_r(\ell)\Theta(r))-\cos(\ell\Theta(r)))}{(1+h'(r)-h'(r)\cos(g_r(\ell)\Theta(r)))}.
\end{equation}

It follows now using (\ref{e:grthetasmoothness2}) that since $h'(r)=O(h(r))^2$ as $r\rightarrow \frac{1}{2}$, there is a constant $C>0$ such that $\sup_{r,\theta} |\frac{\partial g_{r}(\theta)}{\partial \theta}-1|\leq C\delta$.
\end{proof}

\begin{proof}[Proof of Lemma \ref{l:grthetasmoothnessr}]
Without loss of generality we again assume that $\theta\leq 0$. Observe that by differentiating both sides of (\ref{e:grtheta3}) w.r.t. $r$ we get that

\begin{equation}
\label{e:boundpd1}
\frac{\partial g_{r}(\theta)}{\partial r}\left[\frac{\partial H_{r}
(\ell)}{\partial \ell}\right]_{\ell=g_r(\theta)}+\left[\frac{\partial H_r(\ell)}{\partial r}\right]_{\ell=g_r(\theta)}- \frac{\partial H_r(\theta)}{\partial r}=\delta\left(\frac{\partial H_r(\theta)}{\partial r}-\frac{\partial H_r(-1)}{\partial r}\right).
\end{equation}

It follows that there exists $\theta^*\in (\theta, g_r(\theta))$ such that the left hand side of (\ref{e:boundpd1}) reduces to

$$\frac{\partial g_{r}(\theta)}{\partial r}\left(1+h'(r)-h'(r)\cos(g_r(\theta)\Theta(r))\right)+ (g_r(\theta)-\theta)(h''(r)-h''(r)\cos(\theta^*\Theta(r))+h'(r)\Theta'(r)\theta\sin(\theta^*\Theta(r)))$$

Similarly there exists $\theta^{**}\in (-1,\theta)$ such that the right hand side of (\ref{e:boundpd1}) is equal to

$$\delta(\theta+1)(h''(r)-h''(r)\cos(\theta^{**}\Theta(r))+h'(r)\Theta'(r)\theta\sin(\theta^{**}\Theta(r))). $$

Now observe that
$$\Theta'(r)=-\frac{1+h'(r)}{(r+h(r))\sqrt{4((r+h(r)))^2-1}}$$
for $r>r_0$ and $\Theta'(r)$ is bounded away from infinity if $r<r_0$.

Hence it follows from the above equations that as long as $h'(r)=o(h(r)^2)$ and $h''(r)=O(h(r)^3)$ as $r\rightarrow \frac{1}{2}$, there exists an absolute constant $C>0$ such that

$$|\frac{\partial g_{r}(\theta)}{\partial r}|\left(1+h'(r)-h'(r)\cos(g_r(\theta)\Theta(r))\right)\leq C\delta. $$

The final assertion follows using (\ref{e:grthetadelta}).
\end{proof}

\begin{proof}[Proof of Lemma \ref{l:2ndd1}]
Let $D(r,\theta)$ denote the determinant of the $J(r,\theta)$. Clearly since $D(r,\theta)$ is bounded away from $0$ and $\infty$ it suffices to prove the following two statements.

\begin{enumerate}
\item[(i)] $|\frac{\partial D}{\partial r}|\leq C$, $|\frac{\partial D}{\partial \theta}|\leq C$.
\item[(ii)] $||J_r(r,\theta)||\leq C, ||J_{\theta}(r,\theta)||\leq C$.
\end{enumerate}

The first assertion follows directly by differentiating $D$ since $(h'(r))^2=O(h(r)^3)$ and $h''(r)=O(h(r)^2)$ as $r\rightarrow \frac{1}{2}$.
The second assertion follows directly by differentiating $J(r,\theta)$ entrywise (w.r.t. $r$ and $\theta$) since $(h'(r))^2=O(h(r)^3)$ and $h''(r)=O(h(r)^2)$ as $r\rightarrow \frac{1}{2}$.
\end{proof}

\begin{proof}[Proof of Lemma \ref{l:2ndd2}]
Without loss of generality, we can assume $\theta \leq 0$. It suffices to prove that there exists an absolute constant $C$ such that

\begin{enumerate}
\item[(i)] $|\frac{\partial ^2 g_{r}}{\partial \theta ^2}|\leq C\delta$.
\item[(ii)] $|\frac{\partial ^2 g_{r}}{\partial \theta \partial r}|\leq C\delta$.
\item[(iii)] $|\frac{\partial ^2 g_{r}}{\partial r^2}|\leq C\delta$.
\end{enumerate}

Since the functions are sufficiently smooth the mixed partial derivatives will be equal.

For the proof of $(i)$ and $(ii)$, consider (\ref{e:grthetasmoothness4}). Call the numerator $A(r,\theta)$ and the denominator $B(r,\theta)$. Since $B(r,\theta)$ is bounded away from $0$ and $\infty$, it suffices to prove that for some absolute constant $C>0$, we have  $|\frac{\partial A}{\partial r}|\leq C\delta, |\frac{\partial A}{\partial \theta}|\leq C\delta, |\frac{\partial B}{\partial r}| \leq C, |\frac{\partial B}{\partial \theta}|\leq C$.

We have from (\ref{e:grthetasmoothness4}) that

$$\frac{\partial B}{\partial r}=h''(r)(1-\cos(g_r(\theta)\Theta(r)))+h'(r)\sin(g_r(\theta)\Theta(r))[\frac{\partial g_{r}}{\partial r}\Theta(r)+g_r(\theta)\Theta'(r)].$$

Since the functions above are bounded if $r$ is bounded away from $\frac{1}{2}$ and as $r\rightarrow \frac{1}{2}$ we have $h''(r)=O(h(r)^2)$, and $(h'(r))^2=O(h(r)^3)$ it follows using Lemma \ref{l:grthetasmoothnessr} that $|\frac{\partial B}{\partial r}| \leq C$ for some absolute constant $C$.

We also have
$$\frac{\partial B}{\partial \theta}=h'(r)\sin(g_r(\theta)\Theta(r))\Theta(r)\frac{\partial g_{r}}{\partial \theta}.$$

It follows that $|\frac{\partial B}{\partial \theta}| \leq C$ for some absolute constant $C$.

Next observe that
\begin{eqnarray*}
\frac{\partial A}{\partial r} &=& \delta[h''(r)(1-\cos(\theta \Theta (r)))+h'(r)\theta\Theta'(r)\sin(\theta \Theta (r))] + h''(r)(\cos(g_r(\theta)\Theta(r))-\cos(\theta\Theta(r)))\\
&+& h'(r)\Theta'(r)\biggl((\theta-g_r(\theta))\sin(g_r(\theta)\Theta(r))-\theta(\sin(\theta\Theta(r)) \sin(g_r(\theta)\Theta(r))\biggr)\\
&-& h'(r)\frac{\partial g_r}{\partial r}\Theta(r)\sin(g_r(\theta)\Theta(r)).
\end{eqnarray*}

As before, notice that everything is bounded if $r$ is bounded away from $0$ and $\frac{1}{2}$. It follows using Lemma \ref{l:grthetasmoothnesstheta}, Lemma \ref{l:grthetasmoothnessr} and $h'(r)^2=O(h(r)^3)$, $h''(r)=O(h(r)^2)$ as $r\rightarrow \frac{1}{2}$ that $|\frac{\partial A}{\partial r}|\leq C\delta$ for some absolute constant $C>0$.

Finally observe that

$$\frac{\partial A}{\partial \theta}=\delta h'(r)\Theta(r)\sin(\theta\Theta(r))+h'(r)\Theta(r)\left(\sin(\theta\Theta(r))-\sin(g_r(\theta)\Theta(r))-(\frac{\partial g_r}{\partial \theta}-1)\sin(g_r(\theta)\Theta(r))\right).$$

Arguing as before it follows from Lemma \ref{l:grthetasmoothnesstheta} and $h'(r)=O(h(r)^2)$ as $r\rightarrow \frac{1}{2}$ that $|\frac{\partial A}{\partial \theta}|\leq C\delta$ for some absolute constant $C>0$. This completes the proof of (i) and (ii) above.

For proof of (iii), consider (\ref{e:boundpd1}), let us denote

$$A_1(r,\theta)=\left[\frac{\partial H_r(\ell)}{\partial r}\right]_{\ell=g_r(\theta)}- \frac{\partial H_r(\theta)}{\partial r},$$
$$A_2(r,\theta)= \left(\frac{\partial H_r(\theta)}{\partial r}-\frac{\partial H_r(-1)}{\partial r}\right)$$
and
$$A_3(r,\theta)=(1+h'(r)-h'(r)\cos(g_r(\theta)\Theta(r))).$$

Observe that $A_3$ is bounded away from $0$ and $\infty$, and there exists a constant $C$ such that $|\frac{\partial A_3}{\partial r}|\leq C$ since $h''(r)=O(h(r)^2)$ and $h'(r)^2=O((h(r))^3)$ as $r\rightarrow \frac{1}{2}$. Hence using (\ref{e:boundpd1}), it suffices to show that for some absolute constant $C$ such that $|\frac{\partial A_1}{\partial r}|\leq C\delta$ and
$|\frac{\partial A_2}{\partial r}|\leq C$.

Observe that

$$\frac{\partial A_2}{\partial r} =\int_{-1}^{\theta} \frac{\partial ^2}{\partial r^2}(1+h'(r)-h'(r)\cos(\theta\Theta(r)))~d\theta$$

and

\begin{eqnarray*}
\frac{\partial A_1}{\partial r} &=& \int_{\theta}^{g_r(\theta)} \frac{\partial ^2}{\partial r^2}(1+h'(r)-h'(r)\cos(\theta\Theta(r)))~d\theta\\
&+& \frac{\partial g_r(\theta)}{\partial r}\left(h''(r)(1-\cos(g_r(\theta)\Theta(r)))+h'(r)\Theta'(r)g_r(\theta)\sin(g_r(\theta)\Theta(r))\right).
\end{eqnarray*}

Arguing as before, using Lemma \ref{l:grthetasmoothnessr}, $h''(r)=O(h(r)^2)$, $h'(r)^2=O(h(r))^3$ as $r\rightarrow \frac{1}{2}$ it follows that it is in fact enough to show that $\frac{\partial ^2}{\partial r^2}(1+h'(r)-h'(r)\cos(\theta\Theta(r)))$ is bounded. This follows directly by differentiating since $h^{(3)}(r)=O(h(r)^2)$ and $h''(r)h'(r)=O(h(r)^3)$ as $r\rightarrow \frac{1}{2}$ (the second derivative remains bounded if $r$ is bounded away from $0$ and $\frac{1}{2}$).

This completes the proof of the Lemma.
\end{proof}

\begin{proof}[Proof of Lemma \ref{l:2ndd3}]
Notice that any real valued smooth function $F=F(r,\theta)$, let $\tilde{F}$ denote the function $\tilde{F}(r,\theta)=F(r,g_r(\theta))$. Then we have
$$\frac{\partial \tilde{F}}{\partial r}= \left[\frac{\partial F}{\partial r}\right]_{r,g_r(\theta)}+\left[\frac{\partial F}{\partial  \theta}\right]_{r,g_r(\theta)}\frac{\partial g_r}{\partial r}.$$

$$\frac{\partial \tilde{F}}{\partial \theta}=\left[\frac{\partial F}{\partial  \theta}\right]_{r,g_r(\theta)}\frac{\partial g_r}{\partial \theta}.$$

Now the lemma follows from Lemma \ref{l:grthetasmoothnesstheta}, Lemma \ref{l:grthetasmoothnessr} and the fact that $||J_r(r,\theta)||\leq C, ||J_{\theta}(r,\theta)||\leq C$ which was established in the proof of Lemma \ref{l:2ndd1}.
\end{proof}

\end{document}